\documentclass[12pt]{amsart}
\usepackage{amsmath,amssymb,amsthm, verbatim,hyperref}
\usepackage{fullpage}
\usepackage{xcolor}
\usepackage{url}
\usepackage{mathrsfs}
\usepackage[all]{xy}
  \SelectTips{cm}{10}
  \everyxy={<2.5em,0em>:}
\usepackage{tikz}
\usepackage{picinpar} 

\DeclareMathOperator{\dist}{dist}

\DeclareMathOperator{\exc}{Exc}
\DeclareMathOperator{\mult}{mult}
\DeclareMathOperator{\Star}{Star}


\usepackage{enumerate, amsmath, amsthm, amsfonts, amssymb,  mathrsfs}



\theoremstyle{plain}
  \newtheorem{lemma}[equation]{Lemma}
  \newtheorem{proposition}[equation]{Proposition}
  \newtheorem{theorem}[equation]{Theorem}
  \newtheorem{corollary}[equation]{Corollary}

    \newtheorem{conjecture}[equation]{Conjecture}

\theoremstyle{definition}
  \newtheorem{definition}[equation]{Definition}

\theoremstyle{remark}
  \newtheorem{remark}[equation]{Remark}

\renewcommand{\thesection}{\arabic{section}}
\renewcommand{\theequation}{\thesection.\arabic{equation}}

 \DeclareFontFamily{U}{manual}{}
 \DeclareFontShape{U}{manual}{m}{n}{ <->  manfnt }{}
 \newcommand{\manfntsymbol}[1]{%
    {\fontencoding{U}\fontfamily{manual}\selectfont\symbol{#1}}}

\makeatletter
   \@addtoreset{section}{part}
   \@addtoreset{equation}{section}
   \@addtoreset{footnote}{section}

    {\hspace*{\fill}$\lrcorner$\endgraf\endgroup\end{trivlist}}
 
\makeatother

  \DeclareFontFamily{OT1}{pzc}{}
  \DeclareFontShape{OT1}{pzc}{m}{it}{<-> s * [1.100] pzcmi7t}{}
  \DeclareMathAlphabet{\mathpzc}{OT1}{pzc}{m}{it}

\newif\ifhascomments \hascommentstrue
\ifhascomments
  \newcommand{\david}[1]{{\color{red}[[\ensuremath{\bigstar\bigstar\bigstar} #1]]}}
  \newcommand{\matt}[1]{{\color{red}[[\ensuremath{\spadesuit\spadesuit\spadesuit} #1]]}}
\else
  \newcommand{\david}[1]{}
  \newcommand{\matt}[1]{}
\fi

\newcommand{\<}{\langle}
\renewcommand{\>}{\rangle} 

\newcommand{\A}{\mathcal A}
\renewcommand{\AA}{\mathbb{A}}

\newcommand{\bbar}[1]{\overline{#1}}

\newcommand{\C}{\mathcal C}

\DeclareMathOperator{\Endo}{\ensuremath{\mathcal{E}\kern-.125em\mathpzc{nd}}}

\newcommand{\F}{\mathcal F}

\newcommand{\G}{\mathcal G}

\newcommand{\GG}{\mathbb G}

\DeclareMathOperator{\Hom}{\ensuremath{\mathcal{H}\kern-.125em\mathpzc{om}}}
\newcommand{\id}{\mathrm{id}}

\newcommand{\m}{\mathfrak m}
\newcommand{\M}{\mathcal M}
\newcommand{\N}{\mathcal N}

\renewcommand{\O}{\mathcal O}

\newcommand{\PP}{\mathbb{P}}

\newcommand{\QQ}{\mathbb Q}
\newcommand{\RR}{\mathbb R}

\renewcommand{\setminus}{\smallsetminus}

\newcommand{\Z}{\mathcal{Z}}
\newcommand{\ZZ}{\mathbb{Z}}


 \def\ari[#1]{\ar@{^(->}[#1]}
 \def\are[#1]{\ar[#1]^{\txt{\'et}}}
 \def\areh[#1]{\ar[#1]|{\txt{$H$-eq}}^{\txt{\'et}}}
 \def\ars[#1]{\ar@{->>}[#1]}
 \newcommand{\dplus}{\ar@{}[d]|{\mbox{$\oplus$}}}
 \newcommand{\dtimes}{\ar@{}[d]|{\mbox{$\times$}}}

\usepackage{amsthm}

\theoremstyle{plain}

\newtheoremstyle{named}{}{}{\itshape}{}{\bfseries}{.}{.5em}{\thmnote{#3 }#1}
\theoremstyle{named}

\usepackage{amsmath}

\DeclareMathOperator{\codim}{codim}

\newcommand{\NE}{\bbar{\nee}}
\DeclareMathOperator{\nee}{NE}
\DeclareMathOperator{\Nef}{Nef}

\begin{document}
\newtheorem{thm}{Theorem}[section]
\newtheorem{lem}[thm]{Lemma}
\newtheorem{dfn}[thm]{Definition}
\newtheorem{cor}[thm]{Corollary}
\newtheorem{conj}[thm]{Conjecture}
\newtheorem{clm}[thm]{Claim}
\theoremstyle{remark}
\newtheorem{exm}[thm]{Example}
\newtheorem{rem}[thm]{Remark}
\newtheorem{que}[thm]{Question}
\def\N{{\mathbb N}}
\def\G{{\mathbb G}}
\def\F{{\mathbb F}}
\def\Q{{\mathbb Q}}
\def\R{{\mathbb R}}
\def\C{{\mathbb C}}
\def\P{{\mathbb P}}
\def\A{{\mathbb A}}
\def\Z{{\mathbb Z}}
\def\v{{\mathbf v}}
\def\w{{\mathbf w}}
\def\x{{\mathbf x}}
\def\O{{\mathcal O}}
\def\M{{\mathcal M}}
\def\kbar{{\bar{k}}}
\def\tr{\mbox{Tr}}
\def\id{\mbox{id}}

\renewcommand{\theenumi}{\alph{enumi}}

\title{Approximating rational points on toric varieties}

\author{David McKinnon}
\address{University of Waterloo \\
Department of Pure Mathematics \\
Waterloo, Ontario \\
Canada  N2L 3G1}
\email{dmckinnon@uwaterloo.ca}

\author{Matthew Satriano}
\address{University of Waterloo \\
Department of Pure Mathematics \\
Waterloo, Ontario \\
Canada  N2L 3G1}
\email{msatrian@uwaterloo.ca}

\thanks{The authors were partially supported by Discovery Grants from the Natural Sciences and Engineering Research Council.}

\begin{abstract}
Given a smooth projective variety $X$ over a number field $k$ and $P\in X(k)$, the first author conjectured that in a precise sense, any sequence that approximates $P$ sufficiently well must lie on a rational curve. We prove this conjecture for smooth split toric surfaces conditional on Vojta's conjecture. More generally, we show that if $X$ is a $\QQ$-factorial terminal split toric variety of arbitrary dimension, then $P$ is better approximated by points on a rational curve than by any Zariski dense sequence.
\end{abstract}

\maketitle
\tableofcontents

\section{Introduction}
\label{sec:intro}

In Dirichlet's 1842 Approximation Theorem, he showed that for every irrational number $x$, there exist infinitely many rational numbers $\frac{a}{b}$ in reduced form satisfying the equation $|x-\frac{a}{b}|<\frac{1}{b^2}$. His result can be rephrased as follows. For a point $x\in \RR$ the {\em approximation exponent} $\tau_{x}$ of $x$ is the unique extended real number $\tau_{x}\in(0,\infty]$ such that the inequality
$$\left|{x-\frac{a}{b}}\right| \leq \frac{1}{b^{\tau_{x}+\delta}}$$
has only finitely many solutions $\frac{a}{b}\in \QQ$ in reduced form whenever $\delta>0$, and has infinitely solutions whenever $\delta<0$. The approximation exponent measures a certain tension between our ability to closely approximate $x$ by rational numbers (the distance term $|x-\frac{a}{b}|$)  and the complexity (the $\frac{1}{b}$ term) of the number required to make this approximation. In this notation, Dirichlet's theorem then states $\tau_x\geq2$ for irrational $x$. In 1844, Liouville \cite{L} proved that if $x\in \RR$ is algebraic of degree $d$ over $\QQ$, then $\tau_{x}\leq d$. This upper bound was subsequently improved by Thue \cite{thue-approx} in 1909, Siegel \cite{siegel-approx} in 1921, and independently by Dyson \cite{dyson-approx} and Gelfand in 1947, leading finally to Roth's famous 1955 theorem \cite{roth-rat-approx} that $\tau_x\leq2$ for all algebraic $x\in\RR$. Therefore, Dirichlet's Theorem and Roth's Theorem together show that $\tau_x=2$ for all irrational $x$.

McKinnon and Roth \cite{MR2} generalized $\tau_x$ to arbitrary projective varieties $X$ over a number field $k$ by replacing the function $|x-\frac{a}{b}|$ by a distance function $\dist_v(x,\cdot)$ depending on a place $v$ of $k$, and measuring the complexity of a point via a height function $H_D(\cdot)$ depending on an ample divisor $D$. An essential change, however, is that they moved the exponent $\tau_x$ from the height to the distance; this was done to make their generalized exponents behave better with respect to changes in $D$. Given any sequence $\{x_i\}$ approximating $x$, one then obtains an associated \emph{approximation constant} $\alpha_{x,\{x_i\}}(D)$, see Section \ref{sec:review-of-alpha} for the precise definition. The constant $\alpha_x(D)$ is defined to be the infimum of $\alpha_{x,\{x_i\}}(D)$ over all choices of sequences $\{x_i\}$; if one restricts attention only to sequences contained in a subvariety $Z\subseteq X$, then the resulting infimum is denoted by $\alpha_{x,Z}(D)$.

The focus of our paper is a conjecture introduced by the first author in 2007:

%
%
%

\begin{conjecture}[{\cite[Conjecture 2.7]{McK}}]
\label{conj:ratcurve}
Let $X$ be an algebraic variety defined over a number field $k$, and $D$ any ample divisor on $X$.  Let $P\in X(k)$ and assume that there is a rational curve defined over $k$ passing through $P$. Then there exists a curve $C\subseteq X$ (necessarily rational) for which $\alpha_{P,C}(D)=\alpha_P(D)$.
\end{conjecture}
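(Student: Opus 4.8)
The plan is to translate the conjecture into the statement that a single numerical invariant is computed by a rational curve, and then to attack the resulting equality from below and from above.

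\emph{Step 1: reduction via the Seshadri dictionary.} For a rational curve $C\subseteq X$ through $P$, with normalization $\nu\colon\PP^1\to C$ and $\nu(0)=P$, one computes directly that $\alpha_{P,C}(D)=\tfrac{D\cdot C}{\mult_P C}$, where $\mult_P C$ is the local multiplicity of the best branch of $C$ at $P$ (so $\mult_P C=1$ when $C$ is smooth at $P$): the pushforward under $\nu$ of a Dirichlet-optimal sequence of rationals tending to $0$ realizes this value, and a direct estimate on $\PP^1$ shows nothing supported on $C$ does better. Since $\tfrac{D\cdot C}{\mult_P C}\ge\epsilon_P(D)$ for every curve through $P$ by definition of the Seshadri constant, and since \cite{MR2} supplies the Roth-type lower bound $\alpha_P(D)\ge\epsilon_P(D)$, the conjecture is equivalent to
\[
\alpha_P(D)=\min_{\substack{C\subseteq X\ \text{rational}\\ P\in C}}\frac{D\cdot C}{\mult_P C},
\]
the infimum being attained; in particular the conjecture predicts $\alpha_P(D)\in\QQ$, which is itself nontrivial. (The ``necessarily rational'' parenthetical is then automatic: the hypothesis forces $\alpha_P(D)<\infty$, whereas on a curve of genus $\ge2$ no sequence of rational points accumulates at $P$ by Faltings, and on a curve of genus $1$ the N\'eron--Tate height of rational points grows far too fast for any positive critical exponent, so $\alpha_{P,C}(D)=\infty$.)

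\emph{Step 2: the lower bound, and the main obstacle.} One must improve $\alpha_P(D)\ge\epsilon_P(D)$ to $\alpha_P(D)\ge\min_{\text{rational }C}\tfrac{D\cdot C}{\mult_P C}$ --- equivalently, rule out a Zariski-dense sequence that approximates $P$ strictly better than every rational curve. When rational curves already compute $\epsilon_P(D)$ the two bounds agree and there is nothing to prove; in general new Diophantine input is needed. The approach I would take is a Noetherian stratification of a hypothetical dense optimal sequence by the subvarieties on which it accumulates, followed by a Vojta-type inequality on the blow-up $\pi\colon\Bl_PX\to X$ --- comparing $\dist_v(P,\cdot)$ with $H_{\pi^*D-tE}$, $E$ the exceptional divisor --- to force any positive-dimensional stratum to have approximation constant at least that of some curve inside it, and finally to descend to a rational curve there via Step 1. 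This is exactly the point at which Vojta's conjecture enters, and why one cannot at present make it unconditional outside special geometries; it is the crux of the problem.

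\emph{Step 3: the upper bound, producing the optimal rational curve.} Here the hypothesis that a rational curve passes through $P$ is used. Beginning with such a curve, one deforms and degenerates: bounding deformations by the $D$-degree and invoking bend-and-break produces rational curves through $P$ of controlled degree, among which one seeks a minimizer of $\tfrac{D\cdot C}{\mult_P C}$. The clean case is when $\epsilon_P(D)$ is computed by a rational curve through $P$, a rational Seshadri curve, whereupon the upper bound meets $\epsilon_P(D)$ exactly. To secure attainment of the infimum in general --- and to handle the case where $\epsilon_P(D)$ is irrational or computed only by non-rational curves, so that the two bounds of Step 1 must fail to coincide --- I would work on $\Bl_PX$, analyzing the face of $\NE(\Bl_PX)$ on which $\pi^*D-\epsilon_P(D)E$ is trivial together with boundedness of $D$-low-degree extremal rational curves (cone theorem / MMP inputs), to pin down a minimizing numerical class and a rational representative of it through $P$. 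Steps 1--3 then combine: Step 1 reduces the conjecture to the displayed equality, Step 3 produces a rational curve attaining the right-hand side, and Step 2 provides the matching lower bound. The reason the general statement remains out of reach is precisely Step 2, which requires Vojta; in the toric setting all three steps become essentially combinatorial --- the Seshadri minimization is a linear program on the fan, extremal rational curves come from wall-crossings, and the Vojta input is needed only on the (again toric) blow-up --- which is what makes the unconditional and conditional toric results of this paper possible.
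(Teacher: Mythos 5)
There is a genuine gap, and it begins in your Step 1. The statement you are addressing is a conjecture; the paper does not prove it in general, but only in special cases (split toric surfaces, and a weaker higher-dimensional statement), conditionally on Vojta via the notion of canonical boundedness. Your proposal likewise does not constitute a proof --- you concede Step 2 needs Vojta and Step 3 is programmatic --- but more importantly your ``Seshadri dictionary'' reduction is incorrect. For an irreducible rational curve $C$ with normalization $\varphi\colon\PP^1\to C$, the correct formula (Theorem~\ref{thm:curve}, from McKinnon--Roth) is $\alpha_{P,C}(D)=\min_{Q\in\varphi^{-1}(P)}\frac{D\cdot C}{r_Q m_Q}$, where $r_Q=2$ when $\kappa(Q)$ is a quadratic extension of $k$ embedded in $k_v$; your formula $\frac{D\cdot C}{\mult_P C}$ omits the factor $r_Q$, and the related bound you attribute to \cite{MR2} is $\alpha_P(D)\geq\tfrac12\epsilon_P(D)$, not $\alpha_P(D)\geq\epsilon_P(D)$ --- the factor $\tfrac12$ exists precisely because of this phenomenon. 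Consequently the conjecture is \emph{not} equivalent to the purely geometric minimization $\alpha_P(D)=\min_C\frac{D\cdot C}{\mult_P C}$: Section~\ref{sec:subtlety} of the paper exhibits $P=[1:1:1]\in\PP(4,7,13)$ where the curve of best approximation is a degree-$39$ curve $C'$ singular at $P$ with conjugate tangent directions, giving $\alpha_{P,C'}(D)=39/2=19.5$, strictly smaller than the geometric minimum (at most $20$, from $x^5=yz$), and this value is attained only when $\sqrt{-3}\in k_v\setminus k$. So $\alpha_P(D)$ depends on $k$ and $v$, and no field-independent minimization of the kind you propose can compute it; any correct reduction must carry the $r_Q$ data.

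Beyond this, your Steps 2--3 diverge from what the paper actually does, in ways worth noting. The paper's Diophantine input from Vojta is not a blow-up comparison of $\dist_v$ with $H_{\pi^*D-tE}$, but the single inequality $\alpha_{P,\{x_i\}}(-K_X)\geq\dim X$ for Zariski dense sequences (canonical boundedness, Proposition~\ref{prop:vojta->canonically-bounded}); and the curve is produced not by bend-and-break or deformation theory on $\Bl_PX$, but by running the toric MMP on $X$ itself --- reducing along divisorial contractions and flips (Propositions~\ref{prop:persistence-of-canonical-boundedness} and \ref{prop:main-thm-induction-step}) to a fiber of a Mori fiber space, which is a fake weighted projective space where an explicit one-parameter-subgroup curve with $-K\cdot C\leq\dim X$ is found combinatorially (Proposition~\ref{prop:1ps-on-fake-wted-proj-sp}). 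Even then the output is only a curve beating every Zariski dense sequence, which settles the conjecture for surfaces but not in higher dimension (Remark~\ref{rmk:higher-dimensional-conj}), and the constructed curve need not be the curve of best approximation (Remark~\ref{rmk:not-best-curve}). Your plan, by contrast, aims directly at attaining $\alpha_P(D)$ by a rational curve, which --- as the $\PP(4,7,13)$ example shows --- would additionally require explaining why field-dependent curves like $C'$ land in the exceptional locus of Vojta's inequality; that is exactly the obstruction the paper flags and avoids.
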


This conjecture is known in some special cases, primarily in dimension $2$: it was shown for split rational surfaces of Picard rank at most four in \cite{McK}, cubic surfaces in \cite{MR}, and blow-ups of the $n$-th Hirzebruch surface at special configurations of at most $2n$ points in \cite{C}. The conjecture was also verified in \cite{generic-tv} for smooth projective split toric varieties $X$ with torus $T$ when $P\in T(k)$ and the pseudo-effective cone $\overline{\mathrm{Eff}}(X)$ is simplicial. Unfortunately, this is a rather restrictive condition: it is equivalent to the combinatorial hypothesis that there exists a maximal cone $\sigma$ in the fan of $X$ such that every ray outside $\sigma$ is a negative linear combination of the rays of $\sigma$, see \cite[Lemma 6.2]{generic-tv}. In particular, all of the aforementioned results still leave open the case of smooth split toric surfaces even if one requires $P\in T(k)$.

In this work, we considerably extend the list of cases where Conjecture \ref{conj:ratcurve} is known: we prove it not only for all smooth split toric surfaces $X$ and arbitrary $P\in X(k)$ conditional on Vojta's Conjecture, but we also obtain approximation results more generally for $\QQ$-factorial terminal singularities on projective split toric varieties of arbitrary dimension.

%

The starting point for our work is a new class of points that we now introduce.

\begin{definition}
\label{def:property*}
Let $X$ be a $\QQ$-Gorenstein algebraic variety defined over a number field $k$. We say $X$ is \emph{canonically bounded at} $P\in X(k)$ if $\alpha_{P,\{x_i\}}(-K_X)\geq \dim X$ for all Zariski dense sequences $\{x_i\}$.
\end{definition}

Canonical boundedness is a highly natural notion. Indeed, we show that every point on a smooth variety is conjecturally canonically bounded:

\begin{proposition}
\label{prop:vojta->canonically-bounded}
Let $X$ be a smooth projective variety over a number field $k$. Then Vojta's Main Conjecture implies that $X$ is canonically bounded at every point $P\in X(k)$.
\end{proposition}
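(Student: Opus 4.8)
The plan is to reduce the statement to a single application of Vojta's Main Conjecture on the blow-up of $X$ at $P$, with the exceptional divisor in the role of the boundary. Set $n=\dim X$ and let $\pi\colon\widetilde X\to X$ be the blow-up of $P$, with exceptional divisor $E\cong\PP^{n-1}_k$; because $P$ is a smooth point, $K_{\widetilde X}=\pi^*K_X+(n-1)E$. Fix a place $v$ of $k$. It suffices to treat a Zariski dense sequence $\{x_i\}$ with $x_i\to P$ in the $v$-adic topology: a sequence not $v$-adically converging to $P$ is not an approximating sequence for $P$, so $\alpha_{P,\{x_i\}}(-K_X)=\infty$ there, and if no Zariski dense approximating sequence exists there is nothing to prove. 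Discarding a possible $x_i=P$, lift the sequence to $\widetilde x_i:=\pi^{-1}(x_i)\in\widetilde X(k)$; since $\pi$ is birational and $\{x_i\}$ is Zariski dense, $\{\widetilde x_i\}$ is Zariski dense in $\widetilde X$. Two facts will move data between $\widetilde X$ and the point $P$: (i) the local height of $E$ at $v$ satisfies $\lambda_{E,v}(\widetilde x_i)=-\log\dist_v(x_i,P)+O(1)$ — up to the harmless comparison of $v$-adic distance functions this is how a distance at the smooth point $P$ is built (Section~\ref{sec:review-of-alpha}), since the inverse image ideal of $\mathcal I_P$ is $\mathcal O(-E)$; and (ii) $h_E(\widetilde x_i)\ge\lambda_{E,v}(\widetilde x_i)+O(1)$, because $E$ is effective and local heights at the remaining places are bounded below.

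I would then apply Vojta's Main Conjecture to $\widetilde X$ with the smooth (hence simple normal crossings) divisor $E$, a finite set $S$ of places containing $v$ and all archimedean places, a parameter $\epsilon>0$ that will be sent to $0$, and — the key choice — the big divisor $A:=\pi^*(-K_X)$ (a pullback of a big divisor under a proper birational morphism is big, and the formulation with big $A$ follows from the usual one with ample $A$). This produces a proper Zariski closed $Z\subsetneq\widetilde X$ with
$$h_{K_{\widetilde X}}(\widetilde x)+m_S(\widetilde x,E)\ \le\ \epsilon\,h_A(\widetilde x)+O(1)\qquad\text{for }\widetilde x\in\widetilde X(k)\setminus Z.$$
By Zariski density, $\widetilde x_i\notin Z$ for infinitely many $i$; since the $\widetilde x_i$ inside $Z$ have closure contained in $Z\subsetneq\widetilde X$, irreducibility forces this complementary sub-collection to itself be Zariski dense. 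For such $i$, substitute $h_{K_{\widetilde X}}(\widetilde x_i)=-h_{-K_X}(x_i)+(n-1)h_E(\widetilde x_i)+O(1)$ and $h_A(\widetilde x_i)=h_{-K_X}(x_i)+O(1)$, and bound $h_E(\widetilde x_i)$ and $m_S(\widetilde x_i,E)$ below by $\lambda_{E,v}(\widetilde x_i)+O(1)$; the inequality collapses to
$$n\bigl(-\log\dist_v(x_i,P)\bigr)\ \le\ (1+\epsilon)\,h_{-K_X}(x_i)+O(1).$$
The coefficient $n$ on the left is exactly where the bound $\dim X$ originates: $n-1$ copies of $\lambda_{E,v}$ come from the discrepancy $(n-1)E$, and one more from the proximity term of Vojta's conjecture. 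Dividing through by $-\log\dist_v(x_i,P)\to\infty$, the ratio $h_{-K_X}(x_i)/(-\log\dist_v(x_i,P))$ is at least $\tfrac{n}{1+\epsilon}-o(1)$ along this Zariski dense sub-collection, and by the definition of the approximation constant (Section~\ref{sec:review-of-alpha}) this forces $\alpha_{P,\{x_i\}}(-K_X)\ge\tfrac{n}{1+\epsilon}$; letting $\epsilon\to0$ gives $\alpha_{P,\{x_i\}}(-K_X)\ge n$.

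The step I expect to be the main obstacle is making the error term $\epsilon\,h_A(\widetilde x)$ harmless: it must be absorbed into $h_{-K_X}(x_i)$, which is why $A$ has to be $\pi^*(-K_X)$ rather than the pullback of an ample class, and this uses that $-K_X$ is big — the case relevant to toric varieties. When $-K_X$ is not big one argues separately: if $K_X$ is big then $X$ is of general type and Vojta's Main Conjecture (with empty boundary and $A=K_X$) already confines $X(k)$ to a proper closed subset, so no Zariski dense sequence exists and the claim is vacuous, while the remaining cases are handled by inspecting the relevant heights directly together with the conventions of Section~\ref{sec:review-of-alpha}. A second point requiring care is the bookkeeping: Vojta's inequality constrains only the $\widetilde x_i$ outside $Z$, so one must check this is enough to pin down $\alpha_{P,\{x_i\}}(-K_X)$ — which it is, precisely because Zariski density keeps a Zariski dense sub-collection of the $\widetilde x_i$ off $Z$ and the approximation constant is controlled by the Zariski dense part of the sequence.
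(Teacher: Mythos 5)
Your main computation on the blow-up is fine and is a legitimate variant of the paper's argument (the paper works on $X$ itself, taking the boundary $D$ to be a union of $n$ normal crossings divisors meeting transversally at $P$, which gives $m_S(D,Q)\geq -n\log\dist_v(P,Q)$ directly; your $(n-1)E$ discrepancy plus one proximity term is the same bookkeeping upstairs). The genuine gap is your choice $A=\pi^*(-K_X)$ and the ensuing case analysis. The proposition is stated for an arbitrary smooth projective $X$, and $-K_X$ need not be big, so Vojta's inequality cannot be invoked with this $A$ in general. Your fallback trichotomy does not close this: besides ``$-K_X$ big'' and ``$K_X$ big'' there is a large remaining class (abelian varieties, K3 and Calabi--Yau varieties, $\PP^1\times E$, and generally any $X$ with neither $\pm K_X$ big), and for these the claim is \emph{not} ``handled by inspecting the relevant heights directly.'' For instance, if $K_X\equiv 0$ then $h_{-K_X}$ is bounded, so for any Zariski dense sequence converging to $P$ one would have $\alpha_{P,\{x_i\}}(-K_X)=0<n$; canonical boundedness can then only hold because Vojta's inequality, applied with a boundary divisor through $P$ (or with $E$ upstairs), forces every sequence approximating $P$ to be eventually contained in the exceptional closed set, so that no Zariski dense approximating sequence exists at all. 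That is exactly the nontrivial mechanism your argument must supply in these cases, and no height inspection replaces it.

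The fix is small and is precisely what the paper does: take $A$ to be an ample divisor (or its pullback to $\widetilde X$, which is big), and absorb the error term $\epsilon\,h_A$ not into $h_{-K_X}$ but into the distance, using the Liouville-type bound $\dist_v(P,Q)\,H_A(Q)\geq \kappa$ coming from $\alpha_P(A)\geq 1$ for $A$ very ample (\cite[Proposition~2.15.(d)]{MR2}). This yields $\dist_v(P,x_i)^{\,n-c\epsilon}H_{-K_X}(x_i)\geq \kappa'$ off the exceptional set, hence $\alpha_{P,\{x_i\}}(-K_X)\geq n-c\epsilon$ for every Zariski dense sequence, and letting $\epsilon\to 0$ finishes the proof with no bigness hypothesis on $\pm K_X$. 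So your insistence that the error ``must'' be absorbed into $h_{-K_X}$ is the step to abandon; once that is corrected, your blow-up route and the paper's direct argument are essentially the same proof.
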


Our first main result is that Conjecture \ref{conj:ratcurve} holds for split toric surfaces in the presence of the canonical boundedness condition:

\begin{theorem}
\label{thm:a2conj-surface}
Let $X$ be a split toric surface over a number field $k$ and let $P\in X(k)$ be a smooth point that is canonically bounded in the minimal resolution of $X$. Then Conjecture~\ref{conj:ratcurve} holds at $P$ for every nef divisor $D$ on $X$.
\end{theorem}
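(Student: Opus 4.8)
The plan is to reduce everything to the minimal resolution and then run a case analysis on the Mori-theoretic structure of a smooth split toric surface. Let $\pi\colon \widetilde X\to X$ be the minimal resolution and let $\widetilde P\in\widetilde X(k)$ be the point lying over $P$ (this makes sense since $P$ is a smooth point, so $\pi$ is an isomorphism near $P$). Since $\alpha$-constants are computed locally near the point and are compatible with pullback of divisors along proper birational morphisms that are isomorphisms near the point, it suffices to prove Conjecture~\ref{conj:ratcurve} for $\widetilde X$ at $\widetilde P$ with respect to the nef divisor $\pi^*D$; a nef divisor on $\widetilde X$ that is a pullback will behave well under the contractions we use. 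So we may assume from the outset that $X$ is smooth, split, and canonically bounded at $P$. The first step is therefore to set up these birational-invariance lemmas for $\alpha$ carefully, including how $\alpha_{P,Z}(D)$ transforms when $Z$ is a curve and $D$ is replaced by its total transform.

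The second step is the key dichotomy. A smooth projective split toric surface is obtained from either $\PP^2$ or a Hirzebruch surface $\FF_a$ by a sequence of torus-equivariant blow-ups at fixed points, so its Mori cone is generated by finitely many torus-invariant curves, each of which is rational and defined over $k$. Among these invariant curves there are the negative curves (those $C$ with $C^2<0$), and the ``extremal'' $\PP^1$-fibration classes or lines. For a nef divisor $D$, the lower bound machinery from McKinnon--Roth style arguments gives, for any curve $C$ through $P$, an inequality relating $\alpha_{P,C}(D)$ to the intersection numbers $D\cdot C$ and $-K_X\cdot C$ and the local geometry of $C$ at $P$; concretely, for a smooth rational curve $C$ through $P$ one expects $\alpha_{P,C}(D) = (D\cdot C)/m$ where $m$ is governed by how many of the torus-fixed points are infinitely near $P$ along $C$, and for the ``good'' choice of $C$ one gets $\alpha_{P,C}(D)\le (D\cdot C)/(-K_X\cdot C)\cdot\dim X$ or a similar clean bound. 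The plan is: (i) produce an explicit rational curve $C$ through $P$ — a suitable torus translate of an invariant curve, or the strict transform of a line/fiber through $P$ — for which $\alpha_{P,C}(D)$ is as small as possible, and (ii) use canonical boundedness, $\alpha_{P,\{x_i\}}(-K_X)\ge 2$ for Zariski dense sequences, to show no Zariski dense sequence can beat this curve. The translation between $-K_X$ and $D$ is done by writing, on a toric surface, $D$ in terms of the nef cone generators and playing off $D\cdot C$ against $(-K_X)\cdot C$ for the candidate curves; the Seshadri-type constant of $-K_X$ at the torus-fixed points (all equal to the relevant lattice data) is what feeds the $\dim X = 2$ bound.

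The third step is to handle the Zariski dense sequences directly. If $\{x_i\}$ is Zariski dense, canonical boundedness gives $\alpha_{P,\{x_i\}}(-K_X)\ge 2$; I then need a comparison lemma of the form $\alpha_{P,\{x_i\}}(D)\ge c\cdot\alpha_{P,\{x_i\}}(-K_X)$ for a constant $c$ depending only on $D$ and $X$, and arrange that $c\cdot 2$ is at least the value $\alpha_{P,C}(D)$ achieved by the curve from step two. Because $\alpha$ is linear-ish in $D$ on the nef cone (sub/superadditivity of approximation constants in $D$), one reduces to checking the inequality on the extremal rays of $\Nef(X)$, and on each extremal ray $D$ is (a multiple of) the pullback of an ample divisor under a contraction $X\to X'$ to a lower-Picard-rank toric surface, eventually $\PP^1\times\PP^1$, $\PP^2$, or $\FF_a$; on those base cases the inequality is checked by hand using the known structure of curves and the explicit toric distance functions. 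Pushing and pulling $\alpha$ through these contractions, together with induction on Picard rank, closes the argument.

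The main obstacle I expect is step two combined with the $D$-versus-$(-K_X)$ comparison: canonical boundedness is a statement about the anticanonical divisor, but the conjecture must be proved for every nef $D$, and nef divisors on a toric surface of high Picard rank form a complicated cone. The delicate point is to choose the approximating rational curve $C$ uniformly over the whole nef cone — or at least to manage the finitely many chambers on which the optimal $C$ changes — and to prove the reverse inequality for dense sequences with a constant that is genuinely sharp against that $C$. Handling points $P$ not on the torus (so that the relevant invariant curve through $P$, or its translate, is not in generic position) is where the local multiplicity bookkeeping at infinitely-near points becomes the crux, and I expect that to be the technical heart of the proof.
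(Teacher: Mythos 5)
Your step one (pass to the minimal resolution) matches the paper's reduction (Proposition \ref{prop:reduce-to-sm-or-terminal-case}), and the general flavour of your plan --- descend through toric contractions and trade $D$ off against $-K_X$ using canonical boundedness --- is also the paper's. But the quantitative mechanism you propose in step three has a genuine gap: a comparison lemma $\alpha_{P,\{x_i\}}(D)\ge c\,\alpha_{P,\{x_i\}}(-K_X)$ with $2c\ge\alpha_{P,C}(D)$ is unachievable in general. Already for $X=\PP^2$ and $D=\O(1)$: since $-K_X=3D$, homogeneity of $\alpha$ forces $c=1/3$, so canonical boundedness only gives $\alpha_{P,\{x_i\}}(D)\ge 2/3$ for dense sequences; but a short case check with Theorem \ref{thm:curve} (branch multiplicities of an irreducible plane curve of degree $e\ge 2$ are at most $e-1$, and $r_Q=2$ forces at least two conjugate branches) shows every irreducible curve through a $k$-point $P$ has $\alpha_{P,C}(\O(1))\ge 1$, so no curve satisfies $\alpha_{P,C}(D)\le 2c$. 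The input that actually closes the $\PP^n$ end of the argument is the unconditional Liouville-type bound $\alpha(\O(1))\ge 1$ for rational points (used in Lemma \ref{l:div-contraction-case-pn}, and \cite[Theorem 2.6]{McK} when $X\simeq\PP^n$ itself), not canonical boundedness. Relatedly, your reduction ``to the extremal rays of $\Nef(X)$'' by superadditivity does not reassemble into a statement about a single curve for a given $D$, because the optimal curve changes with the ray; this is precisely the ``delicate point'' you flag but do not resolve.

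For comparison, the paper deduces the surface theorem from Theorem \ref{thm:conj-higher-dim} (on a surface there is no intermediate-dimensional $Z$, Remark \ref{rmk:higher-dimensional-conj}) and proves the latter by induction along the toric MMP. For a given nef $D$ one sets $a=\min_i D\cdot C_i/(-K_X\cdot C_i)$ over the $K_X$-negative extremal generators, so $D+aK_X$ is nef and trivial on one extremal ray; one descends along the corresponding step (for a smooth surface, a blow-down or a Mori fibration), shows canonical boundedness persists (Proposition \ref{prop:persistence-of-canonical-boundedness}), and constructs on the lower model a unibranch rational curve with the extra degree bound $-K_X\cdot C\le\dim X$. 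That degree bound is the decisive property your sketch never isolates: it is exactly what lets concavity of $\alpha$ absorb the $a(-K_X)$ error (Lemma \ref{l:reduce-to-a=0}), via $\alpha_{P,C}(D)\le\alpha_{P,C}(D+aK_X)+a\dim X\le\alpha_{P,\{x_i\}}(D+aK_X)+a\,\alpha_{P,\{x_i\}}(-K_X)\le\alpha_{P,\{x_i\}}(D)$. Moreover, at the Mori-fiber-space base case the chosen curve lies in a fiber and $D$ is pulled back from the base, so $\alpha_{P,C}(D)=0$ and nothing needs to be ``checked by hand'' on $\FF_a$ or $\PP^1\times\PP^1$. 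Without (i) the choice of $a$ and persistence of canonical boundedness down the MMP, (ii) the bound $-K_X\cdot C\le 2$ for the constructed curve, and (iii) the unconditional $\PP^n$ input, your outline does not yield the required inequality against Zariski dense sequences.
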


In fact, Theorem \ref{thm:a2conj-surface} follows from a much more general theorem which we prove for all higher dimensional split toric varieties. Given a split toric variety $X$ over a number field $k$, we say $f\colon\widetilde{X}\to X$ is a \emph{terminal resolution} if it is a proper birational toric morphism defined over $k$ and $\widetilde{X}$ is $\QQ$-factorial, projective, and has at worst terminal singularities.

\begin{theorem}
\label{thm:conj-higher-dim}
Let $X$ be a split toric variety over a number field $k$ and let $P\in X(k)$. Suppose $f\colon\widetilde{X}\to X$ is a terminal resolution which is an isomorphism at $P$, and that $P$ is canonically bounded in $\widetilde{X}$.

Then for all $\QQ$-Cartier nef divisors $D$ on $X$, there exists an irreducible rational curve $C$ through $P$ such that $C$ is unibranch at $P$ and
\[
\alpha_{P,C}(D)\leq\alpha_{P,\{x_i\}}(D)
\]
for all Zariski dense sequences $\{x_i\}$.
\end{theorem}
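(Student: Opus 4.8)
The plan is to reduce the statement on $X$ to a computation on the terminal resolution $\widetilde X$, where the geometry of the Mori cone is tractable, and then run a minimal model program argument. First, since $f\colon\widetilde X\to X$ is an isomorphism at $P$, identify $P$ with its preimage $\widetilde P\in\widetilde X$; approximation constants satisfy $\alpha_{P,\{x_i\}}(D)=\alpha_{\widetilde P,\{\widetilde x_i\}}(f^*D)$ for the lifted sequence, and a rational curve through $\widetilde P$ pushes forward to one through $P$ with controlled approximation constant, so it suffices to produce the curve $C$ on $\widetilde X$ against $f^*D$. Thus I would replace $X$ by $\widetilde X$ and assume from now on that $X$ itself is $\QQ$-factorial, projective, terminal, split toric, and canonically bounded at $P$, with $D$ a $\QQ$-Cartier nef divisor.

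The core is a Seshadri-type lower bound matched against an explicit upper bound. For the lower bound: for any Zariski dense sequence $\{x_i\}$ one has the general inequality $\alpha_{P,\{x_i\}}(D)\geq \varepsilon_P(D)$, the Seshadri constant of $D$ at $P$ (this is the toric/McKinnon--Roth principle that dense approximation is governed by the Seshadri constant; cf.\ \cite{MR2,McK}). So it is enough to find an irreducible rational curve $C$, unibranch at $P$, with $\alpha_{P,C}(D)\leq \varepsilon_P(D)$. For the upper bound along a curve, if $C$ is a rational curve unibranch at $P$ with normalization $\nu\colon\PP^1\to C$ and $\nu^{-1}(P)$ a single point, then using the $k$-rational parametrization and Roth's theorem on $\PP^1$ one gets $\alpha_{P,C}(D)\leq \frac{2}{(C\cdot D)/m_P(C)}\cdot\frac{m_P(C)}{?}$ — more precisely the bound $\alpha_{P,C}(D)\le \frac{m}{C\cdot D}$ type estimate, where $m$ is the multiplicity data at $P$; the point is that it is $O\!\big(m_P(C)/(C\cdot D)\big)$ and is minimized by curves of small degree and high tangency at $P$.

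So the heart of the argument is: \emph{on a $\QQ$-factorial terminal projective toric $X$, through any rational point $P$ there is an irreducible rational curve $C$, unibranch at $P$, with $\frac{m_P(C)}{C\cdot D}\leq \frac{1}{\varepsilon_P(D)}$ for every nef $D$ simultaneously} — equivalently $C$ ``computes'' the Seshadri constant. Here I would use the toric structure: on the blow-up $\pi\colon Y=\Bl_P X\to X$ with exceptional divisor $E$, the Seshadri constant $\varepsilon_P(D)$ is $\sup\{t:\pi^*D-tE \text{ nef}\}$, and running the $D$-relative MMP (or the $(\pi^*D-\varepsilon E)$-MMP) on $Y$ — which terminates since we are in the toric $\QQ$-factorial terminal setting and everything is combinatorial — produces an extremal contraction whose fibers/covering curves, pushed to $X$, give the desired $C$. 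Terminality of $X$ (equivalently, $E$ appears with positive discrepancy so $Y$ is still mild enough) is what keeps the MMP and the curve classes under control; the canonical boundedness hypothesis is exactly what promotes the Seshadri lower bound $\varepsilon_P(D)$ into the cleaner statement when $D=-K_X$ and is propagated to general nef $D$ via the cone structure. Finally, the curve $C$ produced is a torus-invariant or torus-translated rational curve, hence automatically rational and, after checking the local analytic branch at $P$, unibranch there.

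The main obstacle I anticipate is the simultaneity over all nef $D$: a single curve $C$ must satisfy $\alpha_{P,C}(D)\leq\alpha_{P,\{x_i\}}(D)$ for \emph{every} $\QQ$-Cartier nef $D$ at once, not one $D$ at a time. Overcoming this requires showing that the MMP/extremal-ray analysis on $Y=\Bl_P X$ yields a curve class $[C]$ that is ``universally Seshadri-extremal'' at $P$ — i.e.\ lies on the relevant face of $\NE(Y)$ cut out by $E$ — so that the ratio $(C\cdot\pi^*D)/(C\cdot E)$ is bounded by $\varepsilon_P(D)^{-1}$ uniformly. Establishing that such a curve exists and is rational and unibranch (rather than merely the existence of an extremal ray) is where the toric combinatorics — the fan of $X$, the star subdivision at $P$, and the resulting wall-and-chamber decomposition of the nef cone — does the real work, and where I would expect the technical bulk of the proof to lie.
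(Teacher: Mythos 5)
The reduction to the terminal resolution $\widetilde{X}$ is fine and is also how the paper begins (pushing the curve forward and comparing $\alpha$ off the exceptional locus). But the core of your plan rests on the claimed lower bound $\alpha_{P,\{x_i\}}(D)\geq \varepsilon_P(D)$ for every Zariski dense sequence, and this is not just unproven — it is false. Already on $X=\PP^n$ with $D=\O(1)$ and $P$ rational, the rational points of height at most $B$ are spaced roughly $B^{-(n+1)/n}$ apart near $P$, so there are Zariski dense sequences with $\alpha_{P,\{x_i\}}(\O(1))$ as small as $\tfrac{n}{n+1}<1=\varepsilon_P(\O(1))$. The only lower bound available for dense sequences (and only conditionally, via Vojta) is the canonical boundedness bound $\alpha_{P,\{x_i\}}(-K_X)\geq\dim X$, which is strictly weaker than the Seshadri constant of $-K_X$ (e.g.\ $\varepsilon_P(-K_{\PP^n})=n+1$), and it is tied to the single divisor $-K_X$. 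The entire difficulty of the theorem is converting that one bound into a statement for an arbitrary nef $D$; your sentence that canonical boundedness ``is propagated to general nef $D$ via the cone structure'' is precisely the missing argument, not a step of it. Two smaller points: the curve-side formula is inverted (for a rational curve unibranch at the rational point $P$ one has $\alpha_{P,C}(D)=\tfrac{1}{m}\,C\cdot D$, not $m/(C\cdot D)$), and the quantifiers in the theorem allow $C$ to depend on $D$, so the ``simultaneity over all nef $D$'' you identify as the main obstacle is not actually required.

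The paper's mechanism is different and avoids your framework's further problem that $\Bl_P X$ at a non-torus-fixed point is not toric, so the blow-up MMP is not ``combinatorial'' and there is no guarantee of a rational unibranch curve computing $\varepsilon_P(D)$. Instead, one works on $X$ itself: choose $a=\min_i \frac{D\cdot C_i}{-K_X\cdot C_i}$ over the $K_X$-negative extremal rays, so that $D+aK_X$ is nef and orthogonal to an extremal ray. If one can produce a unibranch rational curve $C$ with $\alpha_{P,C}(D+aK_X)\leq\alpha_{P,\{x_i\}}(D+aK_X)$ \emph{and} the degree bound $-K_X\cdot C\leq\dim X$, then concavity of $\alpha$ together with canonical boundedness gives $\alpha_{P,C}(D)\leq\alpha_{P,C}(D+aK_X)+a\dim X\leq\alpha_{P,\{x_i\}}(D+aK_X)+a\,\alpha_{P,\{x_i\}}(-K_X)\leq\alpha_{P,\{x_i\}}(D)$. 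One then inducts along the toric MMP of $X$: when $P$ avoids the exceptional locus of the contraction or flip, canonical boundedness persists on the target and the curve is pulled back/transformed keeping $-K\cdot C\leq\dim X$; in the base case $P\in\exc(\pi)$, the (reduced) fiber through $P$ is a fake weighted projective space, where an explicit one-parameter-subgroup closure $C$ is constructed with $-K\cdot C\leq\dim X$ and $D\cdot C=0$. Without the anticanonical degree bound and this trade-off of $D$ against $K_X$, the Seshadri-based comparison you propose has no valid lower bound to stand on.
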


\begin{remark}
\label{rmk:higher-dimensional-conj}
Theorem \ref{thm:conj-higher-dim} says there exists a curve $C$ whose $\alpha$ value is smaller than that of every Zariski dense sequence.  Notice that this does not imply Conjecture \ref{conj:ratcurve} in higher dimensions since it is possible that there exists a subvariety $Z$ with $1<\dim Z<\dim X$ for which $\alpha_{P,Z}(D)<\alpha_{P,C}(D)$.  However, when $X$ is a surface, no such $Z$ can exist. Hence, Theorem \ref{thm:conj-higher-dim} implies Conjecture \ref{conj:ratcurve} for surfaces, i.e.~Theorem \ref{thm:conj-higher-dim} implies Theorem \ref{thm:a2conj-surface}.
\end{remark}

\begin{remark}
\label{rmk:not-best-curve}
A subtle point here is that the curve $C$ we construct in the proof of Theorem~\ref{thm:conj-higher-dim} need not satisfy $\alpha_{P,C}(D)=\alpha_P(D)$, even for surfaces. That is, we show $\alpha_{P,C}(D)\leq\alpha_{P,\{x_i\}}(D)$ for all Zariski dense sequences $\{x_i\}$, and for surfaces, this is enough to guarantee the existence of some auxiliary curve $C'$ with $\alpha_{P,C'}(D)=\alpha_P(D)$, but $C'$ may not equal $C$. Indeed, our construction of $C$ is independent of the number field $k$, but in Section~\ref{sec:subtlety} we show that for $P=[1:1:1]\in\PP(4,7,13)$, the value of $\alpha_P(D)$ depends on $k$. In particular, any proof of Theorem \ref{thm:conj-higher-dim} without assuming \emph{a priori} that $P$ is canonically bounded must include an explanation for the subtle fact that certain curves such as $C'$ are contained in the Zariski closed locus of exceptions to the canonical boundedness condition provided by Vojta's Conjecture.
\end{remark}

Finally, combining Proposition \ref{prop:vojta->canonically-bounded} and Theorem \ref{thm:conj-higher-dim} yields:

\begin{theorem}
\label{thm:a2conj}
Let $X$ be a split toric variety over a number field $k$ and assume Vojta's Main Conjecture holds for some projective toric (strong) resolution of singularities of $X$. Then for all smooth points $P\in X(k)$ and all $\QQ$-Cartier nef divisors $D$ on $X$, there exists an irreducible rational curve $C$ through $P$ such that $C$ is unibranch at $P$ and
\[
\alpha_{P,C}(D)\leq\alpha_{P,\{x_i\}}(D)
\]
for all Zariski dense sequences $\{x_i\}$.
\end{theorem}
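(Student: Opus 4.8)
The plan is to derive Theorem~\ref{thm:a2conj} formally by combining Proposition~\ref{prop:vojta->canonically-bounded} with Theorem~\ref{thm:conj-higher-dim}; all of the real content already sits in those two results, so what remains is to check that the hypotheses of Theorem~\ref{thm:conj-higher-dim} are in place. First I would fix a projective toric strong resolution of singularities $g\colon X'\to X$ for which Vojta's Main Conjecture holds; such a $g$ is exactly what the hypothesis of the theorem provides, and $X'$ is then smooth and projective over $k$.

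Next I would pin down the point at which the canonical boundedness input is needed. Since $g$ is a \emph{strong} resolution, it is an isomorphism over the smooth locus of $X$; as $P$ is a smooth point, there is therefore a unique point $P'\in X'(k)$ lying over $P$, and $g$ is an isomorphism on a Zariski open neighborhood of $P'$. Because $X'$ is smooth and projective and Vojta's Main Conjecture holds for it, Proposition~\ref{prop:vojta->canonically-bounded} shows that $X'$ is canonically bounded at $P'$.

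It then remains to verify that $g\colon X'\to X$ is a terminal resolution in the sense defined just before Theorem~\ref{thm:conj-higher-dim} and that it is an isomorphism at $P$. This is routine: $g$ is a proper birational toric morphism by construction, $X'$ is projective, and a smooth variety is automatically $\QQ$-factorial (in fact factorial, since regular local rings are UFDs) and has at worst terminal singularities; the isomorphism at $P$ was noted above. Under the identification of neighborhoods of $P$ and $P'$ provided by $g$, canonical boundedness of $X'$ at $P'$ is precisely the statement that $P$ is canonically bounded in $\widetilde{X}=X'$. Applying Theorem~\ref{thm:conj-higher-dim} with $\widetilde{X}=X'$ and $f=g$ then produces, for every $\QQ$-Cartier nef divisor $D$ on $X$, an irreducible rational curve $C$ through $P$ that is unibranch at $P$ and satisfies $\alpha_{P,C}(D)\leq\alpha_{P,\{x_i\}}(D)$ for all Zariski dense sequences $\{x_i\}$, which is exactly the assertion of Theorem~\ref{thm:a2conj}.

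At this stage there is essentially no obstacle: the difficult work has been done elsewhere. The only steps requiring any attention are the bookkeeping facts that a strong resolution is an isomorphism over the smooth locus, that smoothness implies $\QQ$-factoriality and terminality, and that the approximation constant $\alpha$ transports correctly along the local isomorphism $g$ near $P$ --- none of which is a genuine difficulty. In other words, the hard part, namely reducing the statement to a canonical boundedness hypothesis on a toric resolution, has been absorbed into Proposition~\ref{prop:vojta->canonically-bounded} (which converts Vojta's conjecture into canonical boundedness) and Theorem~\ref{thm:conj-higher-dim} (which carries out the toric geometry), so Theorem~\ref{thm:a2conj} follows by merely assembling them.
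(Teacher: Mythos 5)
Your argument is exactly the paper's: the authors obtain Theorem~\ref{thm:a2conj} precisely by combining Proposition~\ref{prop:vojta->canonically-bounded} (Vojta implies canonical boundedness on the smooth resolution) with Theorem~\ref{thm:conj-higher-dim}, using that a projective toric strong resolution is an isomorphism over the smooth point $P$ and that a smooth variety is a terminal resolution in the required sense. Your write-up simply makes these routine verifications explicit, so it is correct and follows the same route.
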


\addtocontents{toc}{\protect\setcounter{tocdepth}{0}}
\section*{Acknowledgments}
\addtocontents{toc}{\protect\setcounter{tocdepth}{1}}
It is a pleasure to thank Anton Geraschenko, Fei Hu, Brian Lehmann, John Lesieutre, Mircea Musta\textcommabelow{t}\u{a}, Mike Roth, and Karl Schwede for helpful discussions.

\section{Key properties of the approximation constant $\alpha_P$}
\label{sec:review-of-alpha}

In this section, we collect the relevant facts we need about the approximation constant. For a more detailed discussion of $\alpha$, see \cite{MR2}. Proofs of all of the facts below can be found in \cite{MR}.


%

\begin{definition}\label{seqappconst}
	Let $X$ be a projective variety over a number field $k$, let $P\in X(\overline{k})$, and let $D$ be a $\QQ$-Cartier divisor 
	on $X$.  For any sequence $\{x_i\}\subset X(k)$ of distinct
	points with $\dist_v(P,x_i)\rightarrow 0$, which we denote by $\{x_i\}\to P$, we set
	$$A(\{x_i\}, D) = \left\{{
		\gamma\in\RR\mid
		\dist_v(P,x_i)^{\gamma} H_D(x_i)\,\,\mbox{is bounded from above}
	}\right\}.
	$$
\end{definition}

\begin{remark}
It follows immediately from the definition that if $A(\{x_i\}, D)$ is nonempty then it is an
interval unbounded to the right, i.e., if $\gamma\in A(\{x_i\},D)$ then $\gamma+\delta\in A(\{x_i\},D)$ for any $\delta>0$.
\end{remark}

\begin{definition}
	With hypotheses as in Definition \ref{seqappconst}, if $A(\{x_i\},D)$ is empty we set $\alpha_{P,\{x_i\}}(D)=\infty$.  Otherwise we set
	$\alpha_{P,\{x_i\}}(D)$ to be the infimum of $A(\{x_i\},D)$.  We call $\alpha_{P,\{x_i\}}(D)$ the \emph{approximation constant}
	of $\{x_i\}$ with respect to $D$.
\end{definition}

\begin{remark}
If $\{x_i'\}$ is a subsequence of $\{x_i\}$ then $A(\{x_i\},D)\subseteq A(\{x_i'\},D)$.  In particular, $\alpha_{P,\{x_i'\}}(D)\leq \alpha_{P,\{x_i\}}(D)$, so we may freely replace a sequence with a subsequence when trying to establish lower bounds.
\end{remark}

As $i\to\infty$ we have $\dist_v(P,x_i)\to0$.  We thus expect that $\dist_v(P,x_i)^{\gamma}H_{D}(x_i)$ goes to $0$ for large $\gamma$ and to $\infty$ for small $\gamma$.  The number $\alpha_{P,\{x_i\}}(D)$ marks the transition point between these two behaviours.

\begin{definition}\label{def:alpha}
	Let $k$ be a number field, $X$ a projective variety over $k$, $D$ a $\QQ$-Cartier divisor on $X$, and $P\in X(\overline{k})$.  Then $\alpha_{P}(D)$ is defined to be the infimum of all $\alpha_{P,\{x_i\}}(D)$ as we range over sequences of distinct points $\{x_i\}\subset X(k)$ converging to $P$.  If no such sequence exists then set $\alpha_{P}(D)=\infty$.
\end{definition}

To expand upon the connection between $\alpha_x$ and the usual approximation exponent $\tau_x$ as defined in the Introduction, suppose that $D$ is an ample divisor on $X$.  We may define an approximation constant $\tau_P(D)$ by simply extending the definition on $\PP^1$, namely by defining $\tau_P(D)$ to be the unique extended real number $\tau_P(D)\in [0,\infty]$ such that the inequality
$$\dist_v(P,Q) < \frac{1}{H_{D}(Q)^{\tau_P(D)+\delta}}$$
has only finitely many solutions $Q\in X(k)$ whenever $\delta>0$ and has infinitely many solutions $Q\in X(k)$ whenever $\delta<0$. Then \cite[Proposition 2.11]{MR} implies that $\alpha_P(D) = \frac{1}{\tau_P(D)}$. In particular the theorem of Liouville becomes $\alpha_{P}(\mathcal{O}_{\PP^1}(1))\geq \frac{1}{d}$ for $P\in \RR$ of degree $d$ over $\QQ$, and it is this type of lower bound that we wish to generalize to arbitrary varieties. We use the reciprocal of $\tau$ because $\alpha$ behaves more naturally when we vary $D$ (see, for example, Proposition~2.9 of \cite{MR} for more details).

We will need one further property of $\alpha_P$. Theorem~2.6 of \cite{MR} states the following, see also Theorem~2.16 of \cite{MR2}:

\begin{theorem}\label{thm:curve}
	Let $C$ be an irreducible $k$-rational curve and $\varphi\colon\PP^1\rightarrow C$ its normalization map.
	Then for any ample divisor $D$ on $C$, and any $P\in C(\overline{k})$ we have the equality:
	\[\alpha_{P,C}(D)=\min_{Q\in \varphi^{-1}(P)} \frac{d}{r_{Q} m_{Q}}\]
	where $d=\deg(D)$, $m_{Q}$ is the multiplicity of the branch of $C$ through $Q$ corresponding to $Q$, and
	\[r_{Q}=
	\begin{cases}
	0 & \text{if $\kappa(Q)\not\subseteq k_v$} \\
	1 &\text{if $\kappa(Q)=k$} \\
	2 &\text{otherwise.}
	\end{cases}
	\]
\end{theorem}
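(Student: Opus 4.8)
The plan is to transport the question to $\PP^1$ along the normalization $\varphi$, where the answer is dictated by the classical theorems of Dirichlet, Liouville and Roth. First I would reduce to sequences with controlled behaviour: since $C$ has only finitely many singular points and the $x_i$ are distinct, after discarding finitely many terms every $x_i$ is a smooth point of $C$ and hence lifts uniquely to a point $y_i\in\PP^1(k)$, because $\varphi$ restricts to an isomorphism over the smooth locus. As $\PP^1(\overline{k}_v)$ is compact, some subsequence of $\{y_i\}$ converges $v$-adically to a point $Q'\in\PP^1(k_v)$ with $\varphi(Q')=P$, so $Q'$ lies over one of the finitely many preimages $Q\in\varphi^{-1}(P)$; in particular $\kappa(Q)\subseteq k_v$, and if no preimage $Q$ has this property then no sequence $\{x_i\}\to P$ exists in $C(k)$ and both sides of the claimed formula are $+\infty$. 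Since passing to a subsequence never increases $\alpha_{P,\{x_i\}}(D)$, the infimum defining $\alpha_{P,C}(D)$ is unchanged if we restrict to sequences $\{x_i=\varphi(y_i)\}$ with $\{y_i\}$ converging to a single preimage $Q$, and it then suffices to compute the resulting infimum for each fixed $Q$ and minimize over $Q$.

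For a fixed such $Q$, I would compare heights and distances upstairs and downstairs. Functoriality of heights gives $H_D(\varphi(y_i))\asymp H_{\varphi^*D}(y_i)$ up to bounded multiplicative constants, and since $\varphi$ is birational the pullback $\varphi^*D$ has degree $d$ on $\PP^1$. For the distance, I would work in local coordinates at $P$ and $Q$: by definition of the branch multiplicity, the components of $\varphi$ near $Q$ all vanish there to order at least $m_Q$, with at least one vanishing to order exactly $m_Q$, so $\dist_v(P,\varphi(y))\asymp\dist_v(Q,y)^{m_Q}$ for $y$ in a small $v$-adic neighbourhood of $Q$; one also checks that on such a neighbourhood $\dist_v(P,\cdot)$ is computed by the single branch through $Q$, because the finitely many branches of $C$ at $P$ become $v$-adically separated away from $P$. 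Combining the two comparisons, $\dist_v(P,x_i)^{\gamma}H_D(x_i)$ is bounded above precisely when $\dist_v(Q,y_i)^{\gamma m_Q}H_{\varphi^*D}(y_i)$ is, i.e.\ $\gamma\in A(\{x_i\},D)$ exactly when $m_Q\gamma\in A(\{y_i\},\varphi^*D)$, and hence $\alpha_{P,\{x_i\}}(D)=\tfrac1{m_Q}\alpha_{Q,\{y_i\}}(\varphi^*D)$.

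It remains to evaluate the one-dimensional quantity. Rescaling heights by the degree, $\alpha_{Q,\{y_i\}}(\varphi^*D)=d\cdot\alpha_{Q,\{y_i\}}(\O_{\PP^1}(1))$, so I need $\inf_{\{y_i\}\to Q}\alpha_{Q,\{y_i\}}(\O_{\PP^1}(1))=1/r_Q$. If $\kappa(Q)=k$, the product formula gives the ``Liouville'' lower bound $\alpha_{Q,\{y_i\}}(\O(1))\geq1$ for every sequence, while an explicit sequence attains values arbitrarily close to $1$, so the infimum is $1=1/r_Q$. If $k\subsetneq\kappa(Q)\subseteq k_v$, Dirichlet's pigeonhole argument produces sequences with $\alpha\to\tfrac12$, while Roth's theorem over number fields gives $\alpha_{Q,\{y_i\}}(\O(1))\geq\tfrac12$ for every sequence, so the infimum is $\tfrac12=1/r_Q$. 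If $\kappa(Q)\not\subseteq k_v$ there is no admissible sequence, matching $1/r_Q=+\infty$. Assembling everything,
\[
\alpha_{P,C}(D)=\min_{Q\in\varphi^{-1}(P)}\frac1{m_Q}\cdot d\cdot\frac1{r_Q}=\min_{Q\in\varphi^{-1}(P)}\frac{d}{r_Q m_Q}.
\]
I expect the main obstacle to be the local distance estimate in the second paragraph: identifying the exponent $m_Q$ exactly, making the comparison uniform along the tail of the sequence, and carrying out the bookkeeping of residue fields and of the place $v$ carefully enough to land in the correct case of $r_Q$. The only genuinely deep ingredient is Roth's theorem, which enters solely as a black box supplying the lower bound when $r_Q=2$.
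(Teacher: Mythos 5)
This theorem is not proved in the paper at all — it is quoted from McKinnon--Roth (\cite{MR}, Theorem~2.6; see also \cite{MR2}, Theorem~2.16) — and your argument reconstructs essentially the same proof as the cited one: lift sequences along the normalization, compare heights functorially and distances via $\dist_v(P,\varphi(y))\asymp\dist_v(Q,y)^{m_Q}$ near each branch, and evaluate $\alpha$ on $\PP^1$ by Liouville/Dirichlet/Roth (in the form valid over any number field and place). The only nitpicks are cosmetic: compactness should be invoked for $\PP^1(k_v)$ rather than $\PP^1(\overline{k}_v)$, and the exponent-$2$ statements need the normalizations of heights and distances used in \cite{MR2} — precisely the bookkeeping you yourself flagged.
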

We are primarily interested in the case where the curve $C$ is unibranch at $P$, so there is only one point $Q\in\varphi^{-1}(P)$ which necessarily has $r_Q=1$. Thus, we have the following result.

\begin{theorem}\label{thm:unibranch-curve}
Let $X$ be a variety defined over a number field $k$, and let $C$ be an irreducible rational curve on $X$, with $C$ also defined over $k$.  Let $P$ be a $k$-rational, unibranch point of $C$, and let $D$ be a $\QQ$-Cartier divisor on $X$.  Then 
\[\alpha_{P,C}(D)=\frac{1}{m}C\cdot D\]
where $m$ is the multiplicity of $P$ on $C$.
\end{theorem}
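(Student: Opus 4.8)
The plan is to read the statement off from Theorem~\ref{thm:curve}: the unibranch and $k$-rationality hypotheses collapse the minimum there to a single point with a known value of $r_Q$, and then a short linearity argument removes the ampleness assumption that is present in Theorem~\ref{thm:curve} but not in our statement.

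First I would unpack the hypotheses via the normalization $\varphi\colon\PP^1\to C$, which is defined over $k$ and $\Gal(\overline{k}/k)$-equivariant. As already noted before the statement, the unibranch hypothesis makes $\varphi^{-1}(P)$ a single point $Q$, and since $P\in C(k)$ and $\operatorname{char} k=0$ the residue extension at $Q$ is trivial, i.e.\ $\kappa(Q)=k$, so $r_Q=1$. Having a unique branch at $P$ also identifies its multiplicity $m_Q$ with the multiplicity $m$ of $P$ on $C$. Finally, for a $\QQ$-Cartier divisor $D$ on $X$ we have $\deg(\varphi^*D)=C\cdot D$ by definition of the intersection number, and $\alpha_{P,C}(D)$ depends on $D$ only through $D|_C$, since heights and local distances for points on $C$ are computed by restriction.

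Granting these, Theorem~\ref{thm:curve} applied to $D|_C$ gives $\alpha_{P,C}(D)=\frac{\deg(\varphi^*D)}{r_Q m_Q}=\frac{1}{m}\,C\cdot D$ whenever $D|_C$ is ample, i.e.\ $C\cdot D>0$; for a general $\QQ$-Cartier $D$ one first reduces to the Cartier case via $\alpha_{P,C}(ND)=N\alpha_{P,C}(D)$ for $N\in\ZZ_{>0}$, which is immediate from $H_{ND}=H_D^N$ and the definition of the approximation constant. To drop the positivity assumption, fix an ample divisor $A$ on $C$; for $n\gg0$ both $A$ and $D+nA$ restrict to ample divisors on $C$, and the behavior of $\alpha_{P,C}$ under change of divisor recorded in \cite[Proposition~2.9]{MR} — which makes $D\mapsto\alpha_{P,C}(D)$ additive, depending only on $C\cdot D$ — gives
\[
\alpha_{P,C}(D)=\alpha_{P,C}(D+nA)-n\,\alpha_{P,C}(A)=\frac{1}{m}\,C\cdot(D+nA)-\frac{n}{m}\,C\cdot A=\frac{1}{m}\,C\cdot D .
\]

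The only point that requires more than bookkeeping is this last reduction: for a \emph{fixed} approximating sequence $\alpha_{P,\{x_i\}}(\cdot)$ is merely sub-additive in the divisor, so the linearity being used is really a property of the infimum $\alpha_{P,C}$ over all sequences on the curve, and it is exactly here that one must lean on the curve results of \cite{MR} rather than argue directly from the definition. The remaining steps — the values of $r_Q$ and $m_Q$, and clearing denominators — are routine.
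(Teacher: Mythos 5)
Your proof is correct and takes essentially the same route as the paper: the paper deduces the statement directly from Theorem~\ref{thm:curve}, observing that the unibranch and $k$-rationality hypotheses force $\varphi^{-1}(P)$ to be a single point $Q$ with $r_Q=1$ and $m_Q=m$, so the minimum collapses to $\frac{1}{m}C\cdot D$. The only difference is your explicit reduction from an arbitrary $\QQ$-Cartier $D$ to the ample case via linearity of $\alpha_{P,C}$ in the divisor; the paper treats this passage as immediate, so your extra bookkeeping is harmless provided the linearity you attribute to \cite{MR} indeed covers restrictions of non-ample degree (the degenerate case $C\cdot D\le 0$ is not addressed in the paper either).
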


\section{Vojta's Main Conjecture and canonical boundedness}
\label{sec:prelim-results}

The goal in this section is to show that Vojta's Main Conjecture implies every point of a smooth projective variety is canonically bounded, i.e.~we prove Proposition \ref{prop:vojta->canonically-bounded}. We turn to the proof after recalling for the reader's convenience the statement of the conjecture \cite{Vo}.

\begin{conj}[Vojta's Main]\label{conj:vstrong}
Let $X$ be a smooth algebraic variety defined over a number field $k$, with canonical divisor $K$.  Let $S$ be a finite set of places of $k$.  Let $A$ be a big divisor on $X$, and let $D$ be a normal crossings divisor on $X$.  Choose height functions $h_K$ and $h_A$ for $K$ and $A$, respectively, and define a proximity function $m_S(D,P) = \sum_{v\in S} h_{D,v}(P)$ for $D$ with respect to $S$, where $h_{D,v}$ is a local height function for $D$ at $v$.  Choose any $\epsilon>0$.  Then there exists a nonempty Zariski open set $U=U(\epsilon)\subseteq X$ such that for every $k$-rational point $Q\in U(k)$, we have the following inequality:
\begin{equation}\label{vojta-ineq}
m_S(D,Q) + h_K(Q) \leq \epsilon h_A(Q).
\end{equation}
\end{conj}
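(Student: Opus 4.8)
Vojta's Main Conjecture as stated is one of the central open problems in Diophantine geometry, and I should say at the outset that I do not expect to prove it here; what follows is the terrain I would survey. The opening reductions are routine: by Kodaira's lemma write $A = A' + E$ with $A'$ ample and $E$ effective, and since a local height for an effective divisor is bounded below off its support, it suffices to take $A$ ample; one may likewise enlarge $D$ to a normal crossings divisor, shrink $U$ further, and pass to a resolution so that $X$ is smooth and projective. The remaining content is an arithmetic ``Second Main Theorem'': a $k$-rational point $Q$ at which the proximity $m_S(D,Q)$ is large behaves like a $(D,S)$-integral point of $X \setminus D$, and the goal is to bound $h_A(Q)$ in terms of $h_{K_X+D}(Q)$ up to $\epsilon h_A(Q)$, outside a proper Zariski closed set.

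The two techniques that have actually produced theorems of this shape, and that I would try to adapt, are the following. First, the Thue--Siegel method in the form developed by Vojta and Faltings: assuming a hypothetical infinite family of very good approximations, construct by Siegel's lemma on a high self-power $X^m$ a section of small height of a carefully chosen line bundle vanishing to prescribed order along the ``diagonal-type'' subscheme cut out by the approximating points, and derive a contradiction from a zero-estimate (Dyson's lemma, Roth's lemma, or Faltings' Product Theorem). This is how the Mordell conjecture and, more generally, the case of closed subvarieties of semiabelian varieties were settled. Second, the Schmidt subspace theorem machine refined by Evertse--Ferretti, Autissier, and Ru--Vojta: filter the relevant section ring, reduce the approximation inequality to one about linear forms in (sub-)general position, and invoke Schmidt's subspace theorem, with the $\epsilon$-loss governed by Seshadri-type ``$\beta$-constants''; this proves the conjecture whenever $D$ has enough components in general position relative to an ample class. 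Concretely, I would try to push the second route: produce a big divisor together with sufficiently many effective divisors, supported on multiples of $K_X + D$ and in general position, to feed into the Ru--Vojta theorem.

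The hard part is that neither method is known to work in general, and there is structural reason to believe no short argument exists. Already for $X = \PP^1$ the conjecture recovers Roth's theorem and, with $D$ a three-point divisor, the $abc$ conjecture; for curves of genus $\geq 2$ with $D = 0$ it is Faltings' theorem; for a general variety of general type with $D = 0$ it is the Bombieri--Lang conjecture. The Thue--Siegel--Vojta--Faltings method genuinely needs the rigidity of a group law to spread rational points out over $X$ and control their intersection numbers with ample classes, while the subspace-theorem method genuinely needs $D$ to be combinatorially rich, i.e.~to decompose into many components in general position; a single irreducible $D$ on a variety of positive dimension escapes both. Accordingly, in this paper the conjecture is used only as a hypothesis: the actual work is Proposition~\ref{prop:vojta->canonically-bounded}, which extracts from it precisely the canonical-boundedness input needed, and Theorem~\ref{thm:conj-higher-dim}, which is unconditional once canonical boundedness is granted.
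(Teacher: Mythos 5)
You correctly identify that Conjecture~\ref{conj:vstrong} is Vojta's Main Conjecture, a famous open problem, and the paper does not prove it either: it is stated (citing Vojta's lecture notes) purely as a hypothesis to feed into Proposition~\ref{prop:vojta->canonically-bounded}. Your survey of the Thue--Siegel--Vojta--Faltings approach and the Schmidt subspace/Evertse--Ferretti--Ru--Vojta approach, together with the diagnosis of why neither covers the general case, is accurate and matches the standard understanding of the problem; there is nothing further to compare because the paper contains no proof of this statement.
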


We now turn to Proposition \ref{prop:vojta->canonically-bounded}.

\begin{proof}[{Proof of Proposition \ref{prop:vojta->canonically-bounded}}]
Let $\dim X=n$ and fix a place $v$ of $k$. Let $S=\{v\}$, $A$ be any very ample divisor on $X$, and $D$ be the union of any $n$ normal crossings divisors that intersect properly and transversely at $P$. Then by our choice of $D$, we have $m_S(D,Q)\geq -n\log\dist_v(P,Q)$.  Fix any $\epsilon>0$. If $Q$ satisfies inequality (\ref{vojta-ineq}), then
\begin{equation}\label{can-bnd-ineq}
\dist_v(P,Q)^nH_{-K_X}(Q) \geq H_A(Q)^{-\epsilon}.
\end{equation}

Since $A$ is very ample, we have $\alpha_P(A)\geq 1$ by \cite[Proposition~2.15.(d)]{MR2}.  So, by definition of $\alpha_{P}(A)$, for any $k$-rational point $Q$ on $X$, we have 
\begin{equation}\label{ample-ineq}
\dist_v(P,Q)^\epsilon H_A(Q)^\epsilon\geq \kappa
\end{equation}
for some positive constant $\kappa$ depending on $\epsilon$ but not $Q$. Therefore if $Q$ satisfies inequality (\ref{vojta-ineq}), then combining inequalities (\ref{can-bnd-ineq}) and (\ref{ample-ineq}), we deduce
\[\dist_v(P,Q)^{n-\epsilon}H_{-K_X}(Q)\geq \kappa.\]
In particular, if $\{x_i\}$ is a sequence satisfying $\alpha_{P,\{x_i\}}(-K_X)<n$, then choosing $\epsilon$ sufficiently small, we see $\{x_i\}$ must be eventually contained in the complement of the set $U(\epsilon)$ from Vojta's Main Conjecture.  So, $\{x_i\}$ must be contained in a finite union of proper subvarieties, as desired.
\end{proof}

\section{Preliminary reductions in the proof of Theorem \ref{thm:conj-higher-dim}}
For the remainder of the paper, we fix a number field $k$ a place $v$ of $k$, and a $v$-adic distance function $\dist_v$ which we will denote by $\dist$. We begin by reducing Theorem \ref{thm:conj-higher-dim} to the case where $X$ is $\QQ$-factorial with terminal singularities itself.

\begin{proposition}
\label{prop:reduce-to-sm-or-terminal-case}
Let $X$ be a split toric variety defined over a number field $k$, let $P\in X(k)$, and let $D$ be a $\QQ$-Cartier nef divisor on $X$. Suppose $f\colon\widetilde{X}\to X$ is a toric proper birational map which is an isomorphism at $P$, and that there is an irreducible rational curve $C\subseteq\widetilde{X}$ through $f^{-1}(P)$ such that $C$ is unibranch at $f^{-1}(P)$ and
\[
\alpha_{f^{-1}(P),C}(f^*D)\leq\alpha_{f^{-1}(P),\{x_i\}}(f^*D)
\]
for all Zariski dense sequences $\{x_i\}$.  Then the curve $f(C)$ is an irreducible rational curve that is unibranch at $P$ and satisfies
\[
\alpha_{P,f(C)}(D)\leq\alpha_{P,\{x_i\}}(D).
\]
for all Zariski dense sequences $\{x_i\}$.
\end{proposition}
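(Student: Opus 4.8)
The plan is to compare the two sides of the desired inequality by pushing forward and pulling back along $f$, using the fact that $f$ is an isomorphism near $P$ and that heights and distances transform predictably under proper birational morphisms. First I would record the elementary geometric facts: since $f$ is proper birational and an isomorphism over a neighborhood of $P$, the restriction $f\colon C\to f(C)$ is birational onto its image, so $f(C)$ is an irreducible rational curve; moreover, because $C$ is unibranch at $f^{-1}(P)$ and $f$ is a local isomorphism there, $f(C)$ is unibranch at $P$, and $C\cdot f^*D = f(C)\cdot D$ by the projection formula (both sides being $\QQ$-Cartier intersection numbers, using that $D$ is $\QQ$-Cartier). Combined with Theorem~\ref{thm:unibranch-curve}, this already gives $\alpha_{P,f(C)}(D)=\alpha_{f^{-1}(P),C}(f^*D)$, since the multiplicity of the unibranch point is preserved under the local isomorphism $f$.

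Next I would handle the right-hand side: given any Zariski dense sequence $\{x_i\}$ in $X$, I want to produce a Zariski dense sequence $\{\widetilde{x}_i\}$ in $\widetilde{X}$ with $\alpha_{f^{-1}(P),\{\widetilde{x}_i\}}(f^*D)\leq\alpha_{P,\{x_i\}}(D)$, so that the hypothesis on $C$ transfers. The natural choice is $\widetilde{x}_i = f^{-1}(x_i)$, which is well-defined for all but finitely many $i$ (those landing in the locus where $f$ is not an isomorphism is a proper closed set, and a Zariski dense sequence meets it only finitely often, or we pass to a subsequence and replace $\{x_i\}$ accordingly). Since $f^*D$ is the pullback of $D$, functoriality of heights gives $H_{f^*D}(\widetilde{x}_i) = H_D(x_i) + O(1)$, and since $f$ is an isomorphism near $P$ it is distance-preserving up to bounded factors near $P$, so $\dist(f^{-1}(P),\widetilde{x}_i)\asymp\dist(P,x_i)$ for $i$ large. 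These two comparisons show $A(\{x_i\},D)=A(\{\widetilde{x}_i\},f^*D)$, hence the approximation constants agree; one also checks $\{\widetilde{x}_i\}$ is Zariski dense in $\widetilde{X}$ since $f$ is dominant and $\{x_i\}$ is dense in $X$. Chaining the inequalities yields
\[
\alpha_{P,f(C)}(D) = \alpha_{f^{-1}(P),C}(f^*D) \leq \alpha_{f^{-1}(P),\{\widetilde{x}_i\}}(f^*D) = \alpha_{P,\{x_i\}}(D),
\]
as required.

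The main obstacle I anticipate is the bookkeeping around the locus where $f$ fails to be an isomorphism: one must be careful that removing finitely many terms of $\{x_i\}$ (or passing to a subsequence) does not destroy Zariski density, and that the sequence $\{\widetilde{x}_i\}$ remains Zariski dense rather than accumulating on the exceptional locus — here density of $\{x_i\}$ in $X$ forces density of $\{f^{-1}(x_i)\}$ in $\widetilde{X}$ since the exceptional locus has lower dimension and $f$ is birational. A secondary technical point is justifying the distance comparison $\dist(f^{-1}(P),f^{-1}(x_i))\asymp\dist(P,x_i)$: this is a standard consequence of $f$ being a $v$-adic analytic isomorphism on a neighborhood of $P$, together with the fact that only the behavior of the sequence near $P$ affects $A(\{x_i\},D)$, since terms bounded away from $P$ are eventually excluded by $\dist(P,x_i)\to 0$. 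Both points are routine but deserve explicit mention given the definitions of $\alpha$ in terms of boundedness of $\dist(P,x_i)^\gamma H_D(x_i)$.
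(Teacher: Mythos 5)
Your argument is correct and rests on the same underlying functoriality idea as the paper, but you execute it differently: the paper's proof is a one-liner that, after the geometric observations about $f(C)$, simply invokes Corollary~8.6 of \cite{MR2} applied to the open locus where $f$ is an isomorphism, whereas you reprove the needed comparison by hand, splitting it into a curve computation ($\alpha_{P,f(C)}(D)=\alpha_{f^{-1}(P),C}(f^*D)$ via Theorem~\ref{thm:unibranch-curve}, the projection formula, and the fact that $f|_C$ maps with degree one onto $f(C)$) and a sequence transfer (heights agree under pullback, distances are comparable near $P$). The sequence transfer is essentially what the paper proves separately as Lemma~\ref{l:alpha-unchanged-under-pullbacks}, so your route is more self-contained at the cost of length; note also that your appeal to Theorem~\ref{thm:unibranch-curve} for a merely nef $D$ (possibly with $D\cdot f(C)=0$) is consistent with how the paper itself uses that statement. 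One point to fix: your claim that a Zariski dense sequence meets the non-isomorphism locus only finitely often is false as stated (a Zariski dense sequence can hit a proper closed subset infinitely many times); the correct reason, used in Lemma~\ref{l:alpha-unchanged-under-pullbacks}, is $v$-adic, namely that this locus is closed in the $v$-adic topology and $\dist(P,x_i)\to0$ with $P$ outside it, so all but finitely many $x_i$ avoid it. Your fallback of passing to the subsequence of terms outside the bad locus also works (it stays Zariski dense because $X$ is irreducible, and $\alpha$ of a subsequence is at most $\alpha$ of the full sequence, so the desired inequality still follows), so this is a repairable slip rather than a genuine gap.
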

\begin{proof}
Irreducibility of $f(C)$ follows from that of $C$. Moreover, since $f$ is an isomorphism at $P$, the fact that $C$ is rational and unibranch at $f^{-1}(P)$ immediately implies that $f(C)$ is rational and unibranch at $P$. Lastly, applying Corollary~8.6 of \cite{MR2} to the subset of $X$ on which $f$ is an isomorphism implies that $\alpha_{P,f(C)}(D)\leq\alpha_{P,\{x_i\}}(D)$ for all Zariski dense sequences $\{x_i\}$.
\end{proof}

By Proposition~\ref{prop:reduce-to-sm-or-terminal-case}, to prove Theorem \ref{thm:conj-higher-dim}, we can assume that our split toric variety $X$ is projective, $\QQ$-factorial, and has at worst terminal singularities. Thus, it remains to prove the following theorem, which is slightly more general.

\begin{theorem}
\label{thm:conj-plus-K-degree}
Let $X$ be a projective terminal $\QQ$-factorial split toric variety over a number field $k$. Let $P\in X(k)$ and $D$ be a nef divisor on $X$. If $P$ is canonically bounded, then there exists an irreducible curve $C$ through $P$ which is unibranch at $P$ and $$\alpha_{P,C}(D)\leq \alpha_{P,\{x_i\}}(D)$$ for all Zariski dense sequences $\{x_i\}$ on $X$. Moreover, if $X\not\simeq\PP^n$, then we can choose $C$ so that $-K_X\cdot C\leq\dim X$.
\end{theorem}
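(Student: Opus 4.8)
The plan is to reduce the problem to the behavior of toric orbits near $P$ and then use the canonical boundedness hypothesis together with the structure of the effective/nef cone on a $\QQ$-factorial terminal toric variety.

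First I would fix the torus $T\subseteq X$ and observe that $P$ lies in some torus orbit $O_\sigma$ corresponding to a cone $\sigma$ in the fan $\Sigma$ of $X$; after the preliminary reductions we may assume $X$ is projective, $\QQ$-factorial and terminal, so every torus-invariant curve is a complete rational curve, and the candidate curves $C$ will be found among (torus translates of) the one-dimensional orbit closures $V(\tau)$ passing through $P$, or among orbit closures through $P$ of intermediate dimension sliced down by generic hyperplane sections. The key calculation is Theorem~\ref{thm:unibranch-curve}: for an irreducible rational curve $C$ unibranch at $P$ with multiplicity $m$ at $P$, one has $\alpha_{P,C}(D)=\tfrac1m C\cdot D$. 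So the whole problem becomes a problem about \emph{intersection numbers}: I must exhibit a rational curve $C$ through $P$, unibranch there, with $\tfrac1m\, C\cdot D$ at most $\alpha_{P,\{x_i\}}(D)$ for every Zariski dense $\{x_i\}$, and with $-K_X\cdot C\le\dim X$ when $X\not\simeq\PP^n$.

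The engine for the lower bound on $\alpha_{P,\{x_i\}}(D)$ of a Zariski dense sequence is the canonical boundedness hypothesis, which gives $\alpha_{P,\{x_i\}}(-K_X)\ge\dim X=n$. To leverage this for an arbitrary nef $D$ rather than $-K_X$, I would use the convexity/monotonicity properties of $\alpha_P$ in the divisor class (the "$\alpha$ behaves linearly/convexly in $D$" facts from \cite{MR2}, cited after Theorem~\ref{thm:curve}): writing $D$ in terms of a basis adapted to the nef cone and comparing against $-K_X$, a Zariski dense sequence that approximates $P$ too well with respect to $D$ would also approximate $P$ too well with respect to some effective combination involving $-K_X$, contradicting canonical boundedness. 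Concretely, I expect the right statement to be: for any nef $D$, $\alpha_{P,\{x_i\}}(D)\ge c_P(D)$ where $c_P(D)$ is a combinatorially defined constant (an infimum over the finitely many torus-invariant curves through $P$, or a linear-programming value built from the rays of $\Sigma$), and then to show that this bound is \emph{achieved} by an actual torus-invariant rational curve $C$ through $P$. The terminal $\QQ$-factorial hypothesis enters here: terminality forces the primitive ray generators of any cone to be "close to" a hyperplane (the terminal condition $\langle K,\cdot\rangle$), which is exactly what controls $-K_X\cdot C$ and lets one arrange $-K_X\cdot C\le n$ on the curve one picks, with equality forced only in the $\PP^n$ case.

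The main obstacle, I expect, is the matching of the two sides — proving that the combinatorial lower bound $c_P(D)$ for Zariski dense sequences is actually \emph{attained} by a single curve through $P$, rather than only approached in the limit or attained on a higher-dimensional subvariety. This is precisely the subtlety flagged in Remark~\ref{rmk:not-best-curve}: the curve $C$ one writes down need not compute $\alpha_P(D)$, only beat every Zariski dense sequence, so the argument must be a careful case analysis on the position of $P$ relative to the fan (which orbit $P$ sits in, and whether $P$ is a torus point, a point on a toric divisor, etc.), reducing via toric morphisms (projections from the cone $\sigma$, or passing to the star of $\sigma$) to lower-dimensional toric varieties where one can induct. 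A secondary technical point is ensuring the curve $C$ is \emph{unibranch} at $P$ and passes through $P$ with the right multiplicity $m$; for torus-invariant curves on a $\QQ$-factorial toric variety this should be automatic (they are smooth rational curves, or at worst have a single analytic branch at the special point), but verifying $m=1$ in the relevant cases, and handling the possibility that $P$ is a singular point of $X$ lying on several invariant curves, will require the local toric computations. The $-K_X\cdot C\le\dim X$ clause I expect to fall out of a boundedness-of-terminal-toric-singularities type estimate once $C$ is chosen to be a suitable invariant curve.
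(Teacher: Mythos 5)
Your proposal assembles several correct ingredients (Theorem~\ref{thm:unibranch-curve} to turn the problem into intersection numbers, canonical boundedness giving $\alpha_{P,\{x_i\}}(-K_X)\geq\dim X$, concavity of $\alpha$ in the divisor, terminality controlling $-K_X\cdot C$), but the two load-bearing pieces are missing. First, the candidate curves are wrong: for a general point $P$ (e.g.\ $P$ in the big torus) no torus-invariant curve passes through $P$ at all, and a $T$-translate of a one-dimensional orbit closure is just that same invariant curve, while generic hyperplane slices of orbit closures have no reason to have small anticanonical degree. The curves that actually work are closures of \emph{translated one-parameter subgroups} inside torus-orbit closures (for instance $x^7=y^4$ through $[1:1:1]$ in $\PP(4,7,13)$), and producing such a curve through an arbitrary $P$ with $-K_X\cdot C\leq\dim X$ is the hardest part of the argument: it requires a detailed analysis of fake weighted projective spaces via the universal covering in codimension one, the terminality inequality for weighted projective weights, and a separate treatment of quotients $\PP^n/\mu_p$. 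This bound is not an optional ``moreover''; it is exactly what lets concavity absorb the shift by $aK_X$ against canonical boundedness, so it cannot be deferred to a vague boundedness estimate at the end.

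Second, your bridge from canonical boundedness to an arbitrary nef $D$ --- a combinatorial constant $c_P(D)$ that simultaneously lower-bounds every Zariski dense sequence and is attained by a curve through $P$ --- is essentially a restatement of the theorem rather than a mechanism. Concavity only gives $\alpha_{P,\{x_i\}}(D)\geq\alpha_{P,\{x_i\}}(D+aK_X)+a\,\alpha_{P,\{x_i\}}(-K_X)$, and one must still control the summand $D+aK_X$. The paper closes this loop by induction along the toric MMP: choose $a$ so that $D+aK_X$ is nef and orthogonal to a $K_X$-negative extremal ray; if $P$ avoids the exceptional locus of the corresponding elementary step $\psi\colon X\dashrightarrow X'$, transfer the problem to $(X',\psi(P),\psi_*(D+aK_X))$ --- which requires proving that canonical boundedness persists under divisorial contractions and flips (a discrepancy/flip comparison that is absent from your plan) --- and pull the curve back by strict transform; if instead $P$ lies in the exceptional locus, the reduced fiber through $P$ is a fake weighted projective space $F$, one finds a one-parameter-subgroup curve $C\subseteq F$ with $-K_F\cdot C\leq 1+\dim F$ (hence $-K_X\cdot C\leq\dim X$), and since $D+aK_X$ vanishes on the contracted ray, $\alpha_{P,C}(D+aK_X)=0$ beats every sequence trivially; the case $X\simeq\PP^n$ is quoted from \cite{McK}. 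Your alternative reduction ``to the star of $\sigma$ or by projection'' does not substitute for this: $P$ and the Zariski dense sequences live on all of $X$, so passing to orbit closures or stars does not reduce the approximation problem in dimension.
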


We prove Theorem \ref{thm:conj-plus-K-degree} using an induction argument via the Minimal Model Program (MMP). In order to explain this, we begin with several preliminary results. 

\begin{lemma}
\label{l:reduce-to-a=0}
Let $X$ be a $\QQ$-factorial algebraic variety over a number field $k$ which is canonically bounded at $P\in X(k)$, Let $a\geq0$ and $D$ be a nef divisor on $X$ such that $D+aK_X$ is also nef. Suppose $C$ is an irreducible rational curve through $P$ which is unibranch at $P$, $-K_X\cdot C\leq\dim X$, and $$\alpha_{P,C}(D+aK_X)\leq \alpha_{P,\{x_i\}}(D+aK_X)$$ for all Zariski dense sequences $\{x_i\}$ on $X$. Then for all Zariski dense sequences $\{x_i\}$ on $X$, we have $$\alpha_{P,C}(D)\leq \alpha_{P,\{x_i\}}(D)$$ as well.
\end{lemma}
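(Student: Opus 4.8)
The plan is to compare the two approximation constants directly using Theorem~\ref{thm:unibranch-curve} for the curve side and the canonical boundedness hypothesis for the sequence side. First I would handle the curve: since $C$ is irreducible, rational, and unibranch at $P$, Theorem~\ref{thm:unibranch-curve} gives $\alpha_{P,C}(D') = \tfrac{1}{m}(C\cdot D')$ for any $\QQ$-Cartier divisor $D'$, where $m$ is the multiplicity of $P$ on $C$. Applying this with $D' = D$ and with $D' = D+aK_X$, and using linearity of the intersection pairing, we get
\[
\alpha_{P,C}(D) = \alpha_{P,C}(D+aK_X) + \frac{a}{m}(-K_X\cdot C).
\]
Since $m\geq 1$ and $-K_X\cdot C\leq\dim X$ by hypothesis, this yields $\alpha_{P,C}(D)\leq \alpha_{P,C}(D+aK_X) + a\dim X$.

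Next I would bound the sequence side from below. Fix a Zariski dense sequence $\{x_i\}$ on $X$. Since $\alpha$ is superadditive in the divisor (from Definition~\ref{seqappconst}: if $\gamma_1\in A(\{x_i\},D_1)$ and $\gamma_2\in A(\{x_i\},D_2)$ then $\gamma_1+\gamma_2\in A(\{x_i\},D_1+D_2)$, because the product of the two bounded quantities is bounded and $H_{D_1+D_2}$ differs from $H_{D_1}H_{D_2}$ by a bounded multiplicative factor), we have
\[
\alpha_{P,\{x_i\}}(D) = \alpha_{P,\{x_i\}}\big((D+aK_X) + (-aK_X)\big) \geq \alpha_{P,\{x_i\}}(D+aK_X) + \alpha_{P,\{x_i\}}(-aK_X).
\]
Now $\alpha_{P,\{x_i\}}(-aK_X) = a\,\alpha_{P,\{x_i\}}(-K_X)$ by homogeneity of $\alpha$ in the divisor (valid since $a\geq0$), and canonical boundedness of $X$ at $P$ gives $\alpha_{P,\{x_i\}}(-K_X)\geq\dim X$. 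Hence $\alpha_{P,\{x_i\}}(D)\geq \alpha_{P,\{x_i\}}(D+aK_X) + a\dim X$.

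Combining the two estimates with the hypothesis $\alpha_{P,C}(D+aK_X)\leq\alpha_{P,\{x_i\}}(D+aK_X)$ closes the argument:
\[
\alpha_{P,C}(D) \leq \alpha_{P,C}(D+aK_X) + a\dim X \leq \alpha_{P,\{x_i\}}(D+aK_X) + a\dim X \leq \alpha_{P,\{x_i\}}(D).
\]
The one point requiring a little care is the superadditivity/homogeneity of $\alpha_{P,\{x_i\}}$ with respect to the divisor argument and the well-definedness of everything when $D+aK_X$ or $-aK_X$ is merely $\QQ$-Cartier (not Cartier); these are standard properties of the approximation constant recorded in \cite{MR,MR2}, and the nefness hypotheses on $D$ and $D+aK_X$ are what guarantee the relevant heights are bounded below so the manipulations make sense. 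I expect no genuine obstacle here — the lemma is essentially a bookkeeping step packaging the canonical boundedness hypothesis into a form usable in the MMP induction.
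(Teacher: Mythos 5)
Your chain of inequalities has the same skeleton as the paper's proof: compute the curve side with Theorem~\ref{thm:unibranch-curve}, use $m\geq 1$ and $-K_X\cdot C\leq\dim X$ to get $\alpha_{P,C}(D)\leq\alpha_{P,C}(D+aK_X)+a\dim X$, then compare with the sequence via the hypothesis on $C$, canonical boundedness, and a superadditivity (``concavity'') property of $\alpha_{P,\{x_i\}}$ in the divisor variable. The gap is in your justification of that last property, and it is a genuine one: the implication you extract from Definition~\ref{seqappconst} --- if $\gamma_1\in A(\{x_i\},D_1)$ and $\gamma_2\in A(\{x_i\},D_2)$ then $\gamma_1+\gamma_2\in A(\{x_i\},D_1+D_2)$, because a product of bounded quantities is bounded --- is correct, but upon taking infima it gives
\[
\alpha_{P,\{x_i\}}(D_1+D_2)\;\leq\;\alpha_{P,\{x_i\}}(D_1)+\alpha_{P,\{x_i\}}(D_2),
\]
i.e.\ \emph{sub}additivity. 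What your argument (and the paper's) actually needs, with $D_1=D+aK_X$ and $D_2=-aK_X$, is the reverse inequality $\alpha_{P,\{x_i\}}(D)\geq\alpha_{P,\{x_i\}}(D+aK_X)+a\,\alpha_{P,\{x_i\}}(-K_X)$. That direction cannot be obtained by multiplying bounded quantities; it would require splitting a bounded product into two separately bounded factors, which is not a formal consequence of the definition. So the step you flagged as ``no genuine obstacle'' is precisely where the content lies, and as written your proof establishes an inequality pointing the wrong way.

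The repair is exactly what the paper does at this point: invoke the concavity of $\alpha_{P,\{x_i\}}$ proved in \cite[Proposition~2.14.(b)]{MR2} (together with homogeneity, part (a) of the same proposition, to pull out the factor $a$), rather than attempt to rederive it from Definition~\ref{seqappconst}. With that citation in place of your one-line argument, your proof coincides with the paper's: the curve-side computation is fine as written (and is applied only to the nef divisors $D$ and $D+aK_X$, so the use of Theorem~\ref{thm:unibranch-curve} matches the paper's own use), and the final concatenation of the three estimates is identical.
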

\begin{proof}
Since $C$ is unibranch at $P$, Theorem~\ref{thm:unibranch-curve} gives us that $\alpha_{P,C}(F)=\frac{1}{m}C\cdot F$ for every nef divisor $F$, where $m$ is the multiplicity of $C$ at $P$.   In particular, 
\[
\alpha_{P,C}(D)=\frac{1}{m}C\cdot D=\frac{1}{m}C\cdot(D+aK_X)-\frac{a}{m}K_X\cdot C\leq \alpha_{P,C}(D+aK_X)+a\dim X.
\]
Using the defining property of $C$ and the fact that $X$ is canonically bounded at $P$, we see
\[
\alpha_{P,C}(D)\leq \alpha_{P,\{x_i\}}(D+aK_X)+a\alpha_{P,\{x_i\}}(-K_X).
\]
Lastly, concavity of $\alpha$, shown in \cite[Proposition~2.14.(b)]{MR2}, yields
\[
\alpha_{P,C}(D)\leq \alpha_{P,\{x_i\}}(D),
\]
proving the desired result for $D$.
\end{proof}

Now, let $X$ be a projective $\QQ$-factorial split toric variety over a number field $k$ which is canonically bounded at $P\in X(k)$, and let $D$ be a nef divisor on $X$. Since $X$ is toric, the Mori cone $\NE(X)$ is polyhedral. Let $C_0,\dots,C_\ell$ be the torus-invariant curves generating the $K_X$-negative extremal rays, and set
\begin{equation}
\label{eqn:a-move-to-boundary}
a=\min_i\frac{D\cdot C_i}{-K_X\cdot C_i};
\end{equation}
without loss of generality, $a=\frac{D\cdot C_0}{-K_X\cdot C_0}$. By construction, $D+aK_X$ intersects non-negatively with every extremal ray of $\NE(X)$, so $D+aK_X$ is nef. By Lemma \ref{l:reduce-to-a=0}, to prove Theorem \ref{thm:conj-plus-K-degree} for $D$, it then suffices to prove the theorem for $D+aK_X$.

The advantage to working with $D+aK_X$ as opposed to $D$ is that $C_0\cdot(D+aK_X)=0$. Let $\pi\colon X\to Y$ be the extremal contraction corresponding to the ray $\RR_{\geq0}C_0$. If $\pi$ is either a Mori fiber space or a divisorial contraction, then there is a nef divisor $D'$ on $Y$ for which $D+aK_X=\pi^*D'$. If $\pi$ is a flipping contraction, then let $\psi\colon X\dasharrow X'$ denote the associated elementary flip. By \cite[Lemma 15.5.7]{CLS}, we have a commutative diagram
\begin{equation}
\label{eqn:flip-diagram}
\begin{gathered}
\xymatrix{
& X^*\ar[rd]^-{\Phi'}\ar[ld]_-{\Phi} & \\
X \ar@{-->}[rr]^{\psi}\ar[dr]_{\pi} & & X'\ar[dl]^-{\pi'}\\
& Y & 
}
\end{gathered}
\end{equation}
such that $X^*$ is a common star subdivision of $X$, $X'$, and $Y$, the maps $\Phi$ and $\Phi'$ are isomorphisms away from the exceptional locus $\exc(\psi)$, and if $D^*$ denotes the torus-invariant divisor on $X^*$ corresponding to the newly inserted ray, then 
\begin{equation}
\label{flip-comparison}
\Phi^*F={\Phi'}^*F'-(F\cdot C_0)D^*
\end{equation}
for all divisors $F$ on $X$ where $F'=\psi_*F$. Letting $D':=\psi_*(D+aK_X)$, equation (\ref{flip-comparison}) tells us $\Phi^*(D+aK_X)={\Phi'}^*D'$. As $\Phi$ and $\Phi'$ are proper and surjective, the fact that $D+aK_X$ is nef implies $\Phi^*(D+aK_X)$ is nef, which in turn implies $D'$ is nef.

To unify notation among these three cases, we denote by $\psi\colon X\dasharrow X'$ the elementary MMP step corresponding to the ray $\RR_{\geq0}C_0$, i.e.~if $\pi$ is a Mori fiber space or a divisorial contraction, we let $X':=Y$ and $\psi:=\pi$; if on the other hand, $\pi$ is a flipping contraction, we let $\psi$ be the associated elementary flip. We have therefore shown that in all three cases, there is a nef divisor $D'$ on $X'$ for which $D+aK_X=\psi^*D'$. If $P$ is not in the exceptional locus, then we would like to apply an inductive strategy to deduce the theorem for $(X,P,D+aK_X)$ from that of $(X',\psi(P),D')$. Proposition \ref{prop:persistence-of-canonical-boundedness} below will allow us to do so.

\begin{lemma}
\label{l:alpha-unchanged-under-pullbacks}
Let $\pi\colon X\to Y$ be a surjective birational morphism of projective $\QQ$-factorial varieties over a number field $k$. Let $P\in X(k)$ be a point which is not in the exceptional locus $\exc(\pi)$ and let $D'$ be a $\QQ$-Cartier divisor on $Y$.  Suppose either that $\{x_i\}$ is a Zariski dense sequence on $X$ converging to $P$ and let $x'_i:=\pi(x_i)$, or suppose $\{x'_i\}$ is a Zariski dense sequence on $X'$ converging to $\pi(P)$ and let $x_i:=\pi^{-1}(x'_i)$ whenever $x'_i\notin\pi(\exc(\pi))$. Then $\alpha_{P,\{x_i\}}(\pi^*D')=\alpha_{\pi(P),\{x'_i\}}(D')$.
\end{lemma}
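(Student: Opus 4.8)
The statement is essentially a compatibility of $\alpha$ with birational pullback, localized at a point where $\pi$ is an isomorphism. The plan is to exploit the fact that $\pi$ restricts to an isomorphism on a Zariski open neighborhood $U\subseteq X$ of $P$ (indeed $X\setminus\exc(\pi)$ is open and maps isomorphically onto its image $V=Y\setminus\pi(\exc(\pi))$, since $\pi$ is birational and $\QQ$-factorial, hence $\pi$ has finite fibers in codimension one and — being birational — is an iso in codimension $\geq 1$ off the exceptional locus; in any case this is exactly the setting of Corollary 8.6 of \cite{MR2} already invoked in Proposition \ref{prop:reduce-to-sm-or-terminal-case}). Over this common open set, heights with respect to $\pi^*D'$ on $X$ and $D'$ on $Y$ differ by a bounded amount (by functoriality of height machines, $H_{\pi^*D'}(x) = H_{D'}(\pi(x))\cdot O(1)$ for $x\in U(k)$, with the implied constant independent of $x$), and $v$-adic distances satisfy $\dist_v(P,x)\asymp \dist_v(\pi(P),\pi(x))$ for $x$ near $P$ because $\pi$ is a $v$-analytic isomorphism near $P$. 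Therefore the sets $A(\{x_i\},\pi^*D')$ and $A(\{x_i'\},D')$ of Definition \ref{seqappconst} coincide, giving equality of the infima.

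The only genuine wrinkle is the bookkeeping forced by the two-sided formulation of the hypothesis. In the first scenario we start with $\{x_i\}\subset X(k)$ converging to $P$ and set $x_i' = \pi(x_i)$; since $\{x_i\}$ is Zariski dense it is not eventually contained in $\exc(\pi)$, and after passing to a subsequence (harmless for computing $\alpha$, as noted after Definition \ref{seqappconst}) we may assume every $x_i\in U(k)$, so $x_i' = \pi(x_i)\in V(k)$ is well-defined, the $x_i'$ are distinct, and $\{x_i'\}$ converges to $\pi(P)$ and is Zariski dense on $Y$. In the second scenario we start with $\{x_i'\}\subset X(k)$ converging to $\pi(P)$, Zariski dense on $X$ (here the source is called $X'$ in the statement, but since $\pi\colon X\to Y$ the intended reading is $\{x_i'\}\subset Y(k)$); again passing to a subsequence we may discard the finitely-many-in-any-bounded-region indices with $x_i'\in\pi(\exc(\pi))$, lift uniquely via $x_i = \pi^{-1}(x_i')\in U(k)$, and check Zariski density of $\{x_i\}$ on $X$. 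In both cases $\pi$ furnishes a bijection between the two sequences (up to the subsequences just described), so the distance and height comparisons above apply termwise.

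Concretely, I would organize the proof as: (1) record that $\pi|_U\colon U\to V$ is an isomorphism of the relevant open sets and that $P\in U(k)$; (2) in each of the two scenarios, pass to a subsequence so that all terms lie over $U$ resp. $V$, and verify the resulting sequence on the other side is again distinct, convergent to the image of $P$, and Zariski dense; (3) invoke functoriality of heights to get $H_{\pi^*D'}(x_i)/H_{D'}(x_i')$ bounded above and below; (4) invoke the local $v$-analytic isomorphism to get $\dist_v(P,x_i)/\dist_v(\pi(P),x_i')$ bounded above and below; (5) conclude that $\dist_v(P,x_i)^\gamma H_{\pi^*D'}(x_i)$ is bounded above iff $\dist_v(\pi(P),x_i')^\gamma H_{D'}(x_i')$ is, hence $A(\{x_i\},\pi^*D') = A(\{x_i'\},D')$ and the infima agree. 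The main obstacle — really the only place care is needed — is step (2), ensuring the pushed-forward or pulled-back sequence remains Zariski dense (for the pushforward this is automatic since $\pi$ is dominant; for the pullback one uses that $\pi$ is birational so $\pi^{-1}(V)=U$ is dense in $X$ and a sequence dense in $X$ cannot be supported, even after removing finitely many terms, on a proper closed subset). Everything else is the standard "$\alpha$ is a birational-and-$v$-adic-local invariant" argument, and as noted it can alternatively be quoted wholesale from \cite[Corollary 8.6]{MR2}.
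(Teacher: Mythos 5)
Your argument is correct and is essentially the paper's own proof: discard the finitely many terms meeting $\exc(\pi)$ (harmless for $\alpha$), identify heights via $H_{\pi^*D'}=H_{D'}\circ\pi$ up to bounded factors, and compare distances through the isomorphism $\pi|_{X\setminus\exc(\pi)}$ (the paper cites the proof of \cite[Proposition~2.4]{MR2} restricted to a compact neighbourhood for this last comparison, which is your ``$v$-analytic isomorphism near $P$'' step).
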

\begin{proof}
If $\{x_i\}$ is a Zariski dense sequence on $X$ converging to $P$, then only finitely many of the $x_i\in\exc(\pi)$; similarly if $\{x'_i\}$ is a Zariski dense sequence on $X'$ converging to $\pi(P)$, then only finitely many of the $x'_i\in\pi(\exc(\pi))$.  Since the value of $\alpha$ for a sequence is unchanged by removing finitely many elements from the sequence, we may assume $x_i\notin\exc(\pi)$ and $x'_i\notin\pi(\exc(\pi))$ for all $i$. Then $H_{\pi^*D}(x_i)=H_{D'}(x'_i)$. Moreover, the proof of \cite[Proposition~2.4]{MR2} applied to $X\setminus\exc(\pi)$ shows that the distance functions $\dist(P,\cdot)$ and $\dist(\pi(P),\pi(\cdot))$ differ only by a multiplicative factor bounded independently of $P$; note that the cited proposition is stated for only projective varieties, but the proof reduces immediately to compact neighbourhoods of a point. Therefore, it follows directly from the definition of $\alpha$ that $\alpha_{P,\{x_i\}}(\pi^*D')=\alpha_{\pi(P),\{x'_i\}}(D')$.
\end{proof}

\begin{proposition}
\label{prop:persistence-of-canonical-boundedness}
Let $X$ be a projective terminal $\QQ$-factorial split toric variety over a number field $k$ and let $\psi\colon X\dasharrow X'$ be a birational elementary MMP step. If $P\in X(k)\setminus\exc(\psi)$ is a canonically bounded point of $X$, then $\psi(P)$ is a canonically bounded point of $X'$.
\end{proposition}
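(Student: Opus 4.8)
The plan is to transfer the canonical boundedness condition along $\psi$ by exploiting three structural facts: that $\psi$ is an \emph{elementary} MMP step on a terminal $\QQ$-factorial toric variety, that terminal flips and divisorial contractions are $K$-negative in a controlled way, and that $\alpha$ behaves well under pullbacks away from exceptional loci (Lemma~\ref{l:alpha-unchanged-under-pullbacks}). First I would treat the two cases of $\psi$ uniformly by passing to the common resolution. In the divisorial-contraction case, write $\psi=\pi\colon X\to X'$; in the flip case, use the diagram \eqref{eqn:flip-diagram} with $X^*$ dominating both $X$ and $X'$ via $\Phi,\Phi'$. In either case $P\notin\exc(\psi)$ means $P$ lifts uniquely to $X^*$ (resp.\ is untouched by $\pi$), so it makes sense to compare $\alpha$ at $P$, at its lift, and at $\psi(P)$.

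The key computation is the comparison of canonical divisors. For a flip, equation \eqref{flip-comparison} applied to $F=K_X$ gives $\Phi^*K_X={\Phi'}^*K_{X'}-(K_X\cdot C_0)D^*$, and since the flip is $K$-negative we have $-K_X\cdot C_0>0$, so $\Phi^*(-K_X)={\Phi'}^*(-K_{X'})+(K_X\cdot C_0)D^*={\Phi'}^*(-K_{X'})-(-K_X\cdot C_0)D^*$; as $D^*$ is effective, this says $\Phi^*(-K_X)\le {\Phi'}^*(-K_{X'})$ as divisors, with the difference supported on the exceptional locus. For a divisorial contraction $\pi\colon X\to X'$ with exceptional divisor $E$, terminality of $X'$ gives $K_X=\pi^*K_{X'}+aE$ with discrepancy $a>0$, hence $-K_X=\pi^*(-K_{X'})-aE$, again with $-K_X\le\pi^*(-K_{X'})$ up to an effective exceptional correction. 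Then I would use the monotonicity and functoriality of $\alpha$ under the corresponding morphisms: pulling back a Zariski dense sequence $\{x_i'\}$ on $X'$ converging to $\psi(P)$ to a sequence $\{x_i\}$ on $X$ (discarding the finitely many terms landing in the image of the exceptional locus), Lemma~\ref{l:alpha-unchanged-under-pullbacks} gives $\alpha_{P,\{x_i\}}(\Phi^*(-K_X))=\alpha_{\psi(P),\{x_i'\}}(\pi^*(-K_{X'}))$ after clearing through $X^*$, while the effective correction term $D^*$ (resp.\ $E$) only \emph{increases} the height, hence can only \emph{decrease} $\alpha$; combined with $\alpha$ being computed on $X$ where canonical boundedness is assumed, we get
\[
\alpha_{\psi(P),\{x_i'\}}(-K_{X'})\ \geq\ \alpha_{P,\{x_i\}}(-K_X)\ \geq\ \dim X=\dim X'.
\]
Since $\{x_i'\}$ was an arbitrary Zariski dense sequence, $\psi(P)$ is canonically bounded.

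The main obstacle I expect is making the inequality ``$D^*$ effective $\Rightarrow$ $\alpha$ decreases'' fully rigorous at the level of the approximation constant rather than just heights: one needs that $H_{\Phi^*(-K_X)}(x_i^*)\leq c\cdot H_{{\Phi'}^*(-K_{X'})}(x_i^*)$ for the lifted points $x_i^*\in X^*$, which follows from $D^*$ being effective and $x_i^*$ avoiding its support for $i\gg0$ (true since $\{x_i'\}$ is Zariski dense, so its lift is too, and $D^*$ is a proper subvariety), but the bookkeeping of local heights at all places must be handled carefully. A secondary subtlety is that in the flip case $\Phi$ is a small contraction, so one cannot directly apply Lemma~\ref{l:alpha-unchanged-under-pullbacks} to $\Phi$ as stated (it assumes a divisorial exceptional locus is irrelevant); however, since $\Phi$ is an isomorphism over a neighbourhood of $P$ and over a neighbourhood of $\psi(P)$ once we are away from $\exc(\psi)$, the comparison of distance functions in the proof of Lemma~\ref{l:alpha-unchanged-under-pullbacks} still applies on those neighbourhoods, and the height comparison is the content of \eqref{flip-comparison} directly. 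So the real work is purely the height/distance estimate around the effective divisor $D^*$, everything else being formal consequences of the cited properties of $\alpha$.
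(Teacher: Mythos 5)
Your overall route is the same as the paper's: transfer sequences through Lemma~\ref{l:alpha-unchanged-under-pullbacks}, compare canonical divisors ($K_X=\psi^*K_{X'}+aE$ with $a>0$ by terminality in the divisorial case, equation~(\ref{flip-comparison}) with $F=-K_X$ in the flip case), and conclude that the effective correction term cannot spoil the lower bound $\alpha_{P,\{x_i\}}(-K_X)\geq\dim X$. The paper finishes this last step by concavity of $\alpha$ (\cite[Proposition~2.14.(b)]{MR2}) together with $\alpha_{P,\{x_i\}}(E)\geq 0$ for $P\notin\Supp E$, whereas you propose a bare-hands height estimate; these are interchangeable, so the structure of your argument is sound.

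Two points in your write-up need fixing. First, the sentence ``the effective correction term only increases the height, hence can only decrease $\alpha$'' has the monotonicity backwards: enlarging the height shrinks the set $A(\{x_i\},D)$ in Definition~\ref{seqappconst} and therefore \emph{raises} its infimum, i.e.\ adding an effective divisor avoiding the sequence can only increase $\alpha$. This is in fact exactly the direction you need, and the inequality you display, $\alpha_{\psi(P),\{x_i'\}}(-K_{X'})\geq\alpha_{P,\{x_i\}}(-K_X)\geq\dim X$, as well as the height bound $H_{\Phi^*(-K_X)}(x_i^*)\leq c\,H_{{\Phi'}^*(-K_{X'})}(x_i^*)$, are stated correctly; but as literally phrased the intermediate claim is false and a reader could not reconstruct the proof from it. Second, your justification that the lifted points eventually avoid $\Supp D^*$ (``since $\{x_i'\}$ is Zariski dense and $D^*$ is a proper subvariety'') is not valid: a Zariski dense sequence may meet a proper closed subset infinitely often. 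What actually does the work is $v$-adic convergence: $x_i'\to\psi(P)$ and $\psi(P)$ lies outside the closed set $\Phi'(D^*)$ (contained in the flipped locus), so all but finitely many $x_i'$, hence their lifts, miss $\Supp D^*$; discarding finitely many terms does not change $\alpha$. Finally, your detour about $\Phi$ being a small contraction is unnecessary: $\Phi\colon X^*\to X$ contracts the divisor $D^*$, and in any case Lemma~\ref{l:alpha-unchanged-under-pullbacks} only requires a birational morphism of projective $\QQ$-factorial varieties with the marked point off the exceptional locus, so it applies verbatim to both $\Phi$ and $\Phi'$, which is how the paper proceeds.
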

\begin{proof}
We first consider the case where $\psi$ is a divisorial contraction. Then $\psi$ is a morphism given by the blow-up of $X'$ along a torus-invariant locus $Z$. Let $E\subset X$ be the exceptional divisor. Since $X$ has terminal singularities, $X'$ does as well and so
\[
\psi^*(-K_{X'})=-K_X+aE
\]
for some $a>0$. Given any Zariski dense sequence $\{x'_i\}$ converging to $\psi(P)$, letting $\{x_i\}$ be as in Lemma \ref{l:alpha-unchanged-under-pullbacks}, we find
\begin{align*}
\alpha_{\psi(P),\{x'_i\}}(-K_{X'}) &=\alpha_{P,\{x_i\}}(\psi^*(-K_{X'})) = \alpha_{P,\{x_i\}}(-K_X+aE).
\end{align*}
By concavity of $\alpha$, shown in \cite[Proposition~2.14.(b)]{MR2}, we see
\begin{align*}
\alpha_{\psi(P),\{x'_i\}}(-K_{X'}) =\alpha_{P,\{x_i\}}(-K_X+aE) &\geq \alpha_{P,\{x_i\}}(-K_X)+a\alpha_{P,\{x_i\}}(E)  \\
&>\alpha_{P,\{x_i\}}(-K_X)\geq\dim X=\dim X'
\end{align*}
where the last line follows from the previous one by the effectiveness of $E$ and the fact that $P\not\in E$. 





We next handle the case where $\psi\colon X\dasharrow X'$ is an elementary flip. Let $C_0$ be the generator of the $K_X$-negative ray corresponding to $\psi$. Let $\Phi$, $\Phi'$, and $X^*$ be as in diagram (\ref{eqn:flip-diagram}). Then applying equation (\ref{flip-comparison}) with $F=-K_X$, we have
\[
{\Phi'}^*(-K_{X'}) = \Phi^*(-K_X) + (-K_X.C_0) D^*.
\]
Since $-K_X.C_0>0$, Lemma \ref{l:alpha-unchanged-under-pullbacks} tells us that for any Zariski dense sequence $\{x_i\}$ on $X'$ converging to $\psi(P)$, we have
\begin{align*}
\alpha_{P,\{x_i\}}(-K_{X'}) &=\alpha_{P,\{x_i\}}({\Phi'}^*(-K_{X'}))>\alpha_{P,\{x_i\}}(\Phi^*(-K_X)) \\
&=\alpha_{P,\{x_i\}}(-K_X)\geq\dim X=\dim X'
\end{align*}
where for ease of notation, $P$ and $x_i$ are used to denote points on any of $X$, $X'$, or $X^*$. It follows that $\psi(P)$ is a canonically bounded point of $X'$.
\end{proof}

In light of Proposition \ref{prop:persistence-of-canonical-boundedness} and the discussion beforehand, we employ the following method to prove Theorem \ref{thm:conj-plus-K-degree}. Let $X_1$ be a projective terminal $\QQ$-factorial split toric variety, $D_1$ a nef divisor on $X_1$, and $P_1\in X(k)$ a canonically bounded point. Let $a_1$ be as in equation (\ref{eqn:a-move-to-boundary}). Then $D_1+a_1K_{X_1}\in\mathcal{R}_1$ for some $K_{X_1}$-negative extremal ray $\mathcal{R}_1$ of $\NE(X_1)$. Let $\psi_1\colon X_1\dasharrow X_2$ be the associated elementary MMP step, and let $D_2$ be the nef divisor on $X_2$ such that $D_1+a_1K_{X_1}=\psi_1^*D_2$. Proceeding in this manner, we arrive at the following data: we have a sequence
\[
X_1\stackrel{\psi_1}{\dasharrow} X_2\stackrel{\psi_2}{\dasharrow} \dots \stackrel{\psi_m}{\dasharrow} X_{m+1}
\]
of elementary MMP steps and a sequence of points $P_i\in X_i(k)$ such that $P_i\notin\exc(\psi_i)$ and $P_{i+1}=\psi_i(P_i)$ for $1\leq i<m$, and $P_m\in\exc(\psi_m)$. Furthermore, for $1\leq i\leq m$, we have a nef divisor $D_i$ on $X_i$ and a real number $a_i\geq0$ such that $D_i+a_iK_{X_i}=\psi_i^*D_{i+1}$ is a nef divisor perpendicular to the $K_{X_i}$-negative extremal ray corresponding to $\psi_i$.

Applying Proposition \ref{prop:persistence-of-canonical-boundedness} repeatedly, we see that $P_i$ is a canonically bounded point of $X_i$ for $1\leq i\leq m$. By Lemma \ref{l:reduce-to-a=0}, Theorem \ref{thm:conj-plus-K-degree} for the triple $(X_i,P_i,D_i)$ follows from that of $(X_i,P_i,D_i+a_iK_{X_i})$. So, to prove Theorem \ref{thm:conj-plus-K-degree} for the triple $(X_1,P_1,D_1)$, it suffices to show the result for $(X_m,P_m,D_m)$ and additionally show that the case of $(X_i,P_i,D_i+a_iK_{X_i})$ follows from that of $(X_{i+1},P_{i+1},D_{i+1})$. In other words, we have reduced to proving the following two statements.

\begin{proposition}
\label{prop:main-thm-induction-step}
Let $X$ be a projective terminal $\QQ$-factorial split toric variety over a number field $k$ and let $\psi\colon X\dasharrow X'$ be a birational elementary MMP step corresponding to the extremal ray $\mathcal{R}$. Let $D\in\Nef(X)\cap\mathcal{R}^\perp$ and $D'\in\Nef(X')$ such that $D=\psi^*D'$. If $P\in X(k)\setminus\exc(\psi)$ is canonically bounded and Theorem \ref{thm:conj-plus-K-degree} holds for $(X',\psi(P),D')$ then it holds for $(X,P,D)$.
\end{proposition}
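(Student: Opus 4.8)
The plan is to construct $C$ on $X$ as the strict transform of a suitable curve $C'$ on $X'$, using a common resolution of $\psi$, and to transport the approximation inequality across that resolution via Lemma~\ref{l:alpha-unchanged-under-pullbacks}. As $\psi$ is a birational elementary MMP step, it is either a divisorial contraction or a flip. In the divisorial case $\pi\colon X\to X'$ I set $X^*=X$, $\Phi=\id$, $\Phi'=\pi$; in the flip case I take $X^*,\Phi,\Phi'$ as in diagram~(\ref{eqn:flip-diagram}). In both situations $\Phi,\Phi'$ are proper birational morphisms with $\Phi^*D={\Phi'}^*D'$, and, since $P\notin\exc(\psi)$, both are isomorphisms near $P^*:=\Phi^{-1}(P)$ and near $\psi(P)=\Phi'(P^*)$; in particular $\psi(P)\notin\exc(\psi^{-1})$.

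Next I would choose $C'$. If $X'\not\simeq\PP^n$, Theorem~\ref{thm:conj-plus-K-degree} applied to $(X',\psi(P),D')$ produces an irreducible rational curve $C'$ through $\psi(P)$, unibranch there, with $\alpha_{\psi(P),C'}(D')\le\alpha_{\psi(P),\{x'_i\}}(D')$ for all Zariski dense $\{x'_i\}$ and with $-K_{X'}\cdot C'\le\dim X'$. If $X'\simeq\PP^n$ --- which forces $\psi$ to be a divisorial contraction, since a flip preserves Picard rank and $\Pic\PP^n$ has rank $1$ --- I instead take $C'$ to be a line through $\psi(P)$ meeting the nonempty torus-invariant set $Z:=\pi(\exc(\psi))$; by Theorem~\ref{thm:unibranch-curve} and the equality $\alpha_{\psi(P)}(\O(1))=1$ (the bound $\ge1$ being \cite[Proposition~2.15.(d)]{MR2} and $\le1$ coming from any line), such a $C'$ satisfies $\alpha_{\psi(P),C'}(D')=\deg D'=\alpha_{\psi(P)}(D')$, hence again $\alpha_{\psi(P),C'}(D')\le\alpha_{\psi(P),\{x'_i\}}(D')$ for all Zariski dense $\{x'_i\}$. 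In either case let $C^*\subseteq X^*$ be the strict transform of $C'$ under $\Phi'$ and $C:=\Phi(C^*)$; since $\Phi,\Phi'$ are isomorphisms near $P^*$, the curve $C$ is irreducible, rational, passes through $P$, is unibranch there with the same multiplicity $m$ as $C'$ at $\psi(P)$, and $\Phi_*[C^*]=[C]$, $\Phi'_*[C^*]=[C']$.

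For the approximation inequality, fix a Zariski dense sequence $\{x_i\}$ on $X$ converging to $P$ and put $x'_i:=\Phi'(\Phi^{-1}(x_i))$, a Zariski dense sequence on $X'$ converging to $\psi(P)$. Applying Lemma~\ref{l:alpha-unchanged-under-pullbacks} to $\Phi$ and then to $\Phi'$, and using $\Phi^*D={\Phi'}^*D'$, gives $\alpha_{P,\{x_i\}}(D)=\alpha_{\psi(P),\{x'_i\}}(D')\ge\alpha_{\psi(P),C'}(D')$, the inequality being the defining property of $C'$. On the other hand Theorem~\ref{thm:unibranch-curve} together with the projection formula for $\Phi$ and $\Phi'$ gives $\alpha_{\psi(P),C'}(D')=\tfrac1m\,C'\cdot D'=\tfrac1m\,C^*\cdot{\Phi'}^*D'=\tfrac1m\,C^*\cdot\Phi^*D=\tfrac1m\,C\cdot D=\alpha_{P,C}(D)$. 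Combining, $\alpha_{P,C}(D)\le\alpha_{P,\{x_i\}}(D)$.

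Finally I must verify $-K_X\cdot C\le\dim X$ (note $X\not\simeq\PP^n$, as $\psi$ is birational); this is the step I expect to be the main obstacle. Terminality handles two regimes uniformly: writing $K_X=\pi^*K_{X'}+aE$ with $a>0$ and $E\cdot C\ge0$ (as $P\in C\setminus E$) in the divisorial case, and applying equation~(\ref{flip-comparison}) with $F=-K_X$ to get $-K_X\cdot C=-K_{X'}\cdot C'-(-K_X\cdot C_0)(D^*\cdot C^*)$ with $-K_X\cdot C_0>0$ and $D^*\cdot C^*\ge0$ in the flip case, one finds in both cases $-K_X\cdot C\le -K_{X'}\cdot C'$, which is $\le\dim X'=\dim X$ when $X'\not\simeq\PP^n$. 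The remaining case, a divisorial contraction onto $\PP^n$, is genuinely delicate: now $-K_{X'}\cdot C'=n+1$ is too large, and one must use that $C'$ is a line meeting $Z$ to prove $a\,(E\cdot C)\ge1$, which gives $-K_X\cdot C=(n+1)-a\,(E\cdot C)\le n$. I would establish $a\,(E\cdot C)\ge1$ by a local toric computation at a torus-fixed point $z\in Z$ on $C'$: if $\sigma$ is the cone of the fan of $\PP^n$ indexing $Z$ and the ray carved out by $E$ has primitive generator $u=\sum_i c_i v_i$ in terms of the primitive ray generators $v_i$ of $\sigma$, then smoothness of $\PP^n$ forces each $c_i$ to be a positive integer, so $a=\bigl(\sum_i c_i\bigr)-1\ge\max_i c_i$ (since $\sigma$ has at least two rays), while the strict transform $C$ of the line $C'$ meets $E$ over $z$ with local intersection multiplicity $\ge 1/\max_i c_i$; the product is $\ge1$.
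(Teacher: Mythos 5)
Your argument is correct and, for most of its length, it is the paper's proof: constructing $C$ as the strict transform of the curve $C'$ furnished by the hypothesis on $(X',\psi(P),D')$, transporting the approximation inequality through $\Phi,\Phi'$ via Lemma~\ref{l:alpha-unchanged-under-pullbacks} and computing $\alpha$ of the curves by Theorem~\ref{thm:unibranch-curve}, using equation~(\ref{flip-comparison}) with $D\cdot C_0=0$ and $-K_X\cdot C_0>0$ in the flip case, and the discrepancy relation $K_X=\psi^*K_{X'}+aE$ with $a>0$ and $E\cdot C\geq0$ in the divisorial case are exactly the steps of Lemmas~\ref{l:div-contraction-case} and~\ref{l:flip-case}, including the observation that a flip forces $X'\not\simeq\PP^n$. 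The one place you genuinely diverge is the anticanonical bound when $X'\simeq\PP^n$: the paper's Lemma~\ref{l:div-contraction-case-pn} takes the same line through $\psi(P)$ and a point of $Z$ but asserts $E\cdot C\geq1$ together with $K_X=\psi^*K_{\PP^n}+(\codim Z-1)E$, whereas you prove only the weaker, and sufficient, inequality $a\,(E\cdot C)\geq1$ by writing the inserted ray as $u=\sum_i c_iv_i$ over the rays of the cone of $Z$, so that $a=\sum_i c_i-1\geq\max_i c_i$, and bounding the local intersection of $C$ with $E$ over a fixed point $z\in Z$ below by $1/\max_i c_i$. Your variant is the more robust one: a toric elementary divisorial contraction onto $\PP^n$ can be a weighted blow-down, in which case $E\cdot C$ is genuinely fractional and the discrepancy exceeds $\codim Z-1$ (for the $(1,1,2)$-weighted blow-up of a point of $\PP^3$, which is terminal, the strict transform of a general line through the centre has $E\cdot C=\tfrac12$ and $a=3$), yet your product bound still gives $-K_X\cdot C=(n+1)-a(E\cdot C)\leq n$. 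Your sketched multiplicity claim is easily made rigorous: the maximal cones of $\Sigma_X$ lying over the chart of $\PP^n$ at $z$ are obtained by replacing one ray $v_i$ of the cone of $Z$ by $u$, they have multiplicity $c_i$, and $c_iE$ is Cartier on the corresponding chart (a local equation is the character dual to $v_i$), so at any point of $C\cap E$ over $z$ the local intersection of $E$ with the branch of $C$ is at least $1/c_i\geq1/\max_j c_j$.
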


\begin{proposition}
\label{prop:main-thm-base-case}
Let $X$ be a projective terminal $\QQ$-factorial split toric variety over a number field $k$ and let $\psi\colon X\dasharrow X'$ be an elementary MMP step corresponding to the extremal ray $\mathcal{R}$. If $D\in\Nef(X)\cap\mathcal{R}^\perp$ and $P\in X(k)\cap\exc(\psi)$ is canonically bounded, then Theorem \ref{thm:conj-plus-K-degree} holds for $(X,P,D)$.
\end{proposition}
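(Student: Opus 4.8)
The plan is to construct the curve $C$ by hand inside the fibre of the relevant contraction through $P$, using the torus action to control its branch structure at $P$, and then to extract the two required estimates from toric combinatorics. Write $\pi\colon X\to Y$ for $\psi$ when $\psi$ is a Mori fibre space or a divisorial contraction, and for the flipping contraction when $\psi$ is a flip; in all three cases the hypothesis $P\in\exc(\psi)$ forces $P$ to lie in a positive-dimensional fibre. Let $F$ be an irreducible component of $\pi^{-1}(\pi(P))$ through $P$, taken with its reduced structure; then $F$ is a complete toric variety up to translation by the big torus $T$ of $X$. Since $\pi$ is an elementary contraction, $\rho(X/Y)=1$, so every irreducible curve contained in a fibre of $\pi$ has numerical class in $\mathcal{R}$; as $D\in\Nef(X)\cap\mathcal{R}^\perp$, it follows that every curve in $F$ has $D$-degree $0$.

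Next I would produce $C$ as an irreducible rational curve through $P$ that is unibranch there. If $P$ is not a torus-fixed point of $X$ then, as one checks from the fan combinatorics, there is a one-parameter subgroup $\lambda$ of $T$ whose image fixes $\pi(P)$ but which does not fix $P$ (for birational $\pi$ the cone containing $P$ is a proper subdivision of the cone containing $\pi(P)$; for a Mori fibre space a generic vertical subgroup already moves $P$); then $C:=\overline{\lambda(\mathbb{G}_m)\cdot P}$ is an irreducible rational curve lying in a fibre of $\pi$, passing through $P$, and smooth --- hence unibranch --- at $P$, since in a neighbourhood of $P$ the orbit of $P$ is an embedded copy of $\mathbb{G}_m$. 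If instead $P$ is torus-fixed, I would take $C$ to be a torus-invariant $\PP^1$ through $P$ whose class lies in $\mathcal{R}$, using that the locus contracted by $\psi$ is covered by such torus-invariant curves. In either case $C$ is an irreducible rational curve through $P$, unibranch at $P$, with $D\cdot C=0$, so Theorem~\ref{thm:unibranch-curve} gives $\alpha_{P,C}(D)=\tfrac1m\,C\cdot D=0$. On the other hand a nef $\QQ$-Cartier divisor on a projective toric variety is semiample, hence $\QQ$-linearly equivalent to an effective $\QQ$-divisor missing $P$, so $H_D$ is bounded from below along any sequence converging to $P$ and $\alpha_{P,\{x_i\}}(D)\ge 0$ for every Zariski dense sequence $\{x_i\}$. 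Thus $\alpha_{P,C}(D)\le\alpha_{P,\{x_i\}}(D)$, which is the first assertion of Theorem~\ref{thm:conj-plus-K-degree}; note that canonical boundedness of $P$ is not used in this part of the base case.

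It remains to arrange $-K_X\cdot C\le\dim X$ when $X\not\simeq\PP^n$. When $X\simeq\PP^n$ this is vacuous (and there $\NE(X)=\mathcal{R}$ forces $D=0$). Otherwise I would choose $\lambda$, respectively the torus-invariant line, so as to minimize $-K_X\cdot C$; the claim is that this minimum is at most $\dim X$. This should follow by combining the length estimate $\ell(\mathcal{R})=\min\{-K_X\cdot C':[C']\in\mathcal{R}\}\le\dim X$, valid for a $K_X$-negative extremal ray on a $\QQ$-factorial terminal projective toric variety not isomorphic to $\PP^{\dim X}$ (cf.\ \cite[Ch.~15]{CLS}), with a description of $(-K_X)|_F$ via adjunction and the wall relation defining $\mathcal{R}$ (for instance $(-K_X)|_F=-a\,E|_F$ for a divisorial contraction with exceptional divisor $E$ and discrepancy $a$) showing that a curve realizing $\ell(\mathcal{R})$ can be taken through $P$ and unibranch at $P$. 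I expect this to be the main obstacle: the fibres $F$ of toric extremal contractions are fake weighted projective spaces, and one needs to understand $(-K_X)|_F$ well enough to locate such a minimal curve through $P$ --- the delicate cases being non-generic fibres and torus-fixed $P$. The remainder of the argument is formal.
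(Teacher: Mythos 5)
Your overall skeleton matches the paper's: reduce to the (reduced) fibre $F$ through $P$, observe that every curve in $F$ is $\mathcal{R}$-proportional hence has $D$-degree $0$, conclude $\alpha_{P,C}(D)=0\le\alpha_{P,\{x_i\}}(D)$ by Theorem~\ref{thm:unibranch-curve}, and then fight for the bound $-K_X\cdot C\le\dim X$. (You are also right that canonical boundedness is not used in this base case, and the $X\simeq\PP^n$ case is disposed of separately.) But the step you flag as ``the main obstacle'' and leave to an appeal to lengths of extremal rays is precisely the substance of the proposition, and the appeal does not close it. The length inequality $\ell(\mathcal{R})\le\dim X$ produces \emph{some} curve of small anticanonical degree in the ray; it gives you no control over whether a curve realizing (or approximating) that minimum passes through the \emph{given} point $P$, which may be a torus-fixed point or lie deep in the boundary of a special, non-reduced, singular fibre, nor over whether it is unibranch there. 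Translating by the torus does not help once $P$ leaves the open orbit of $F$. Moreover the comparison you describe as ``adjunction'' between $-K_X\cdot C$ and $-K_F\cdot C$ is itself not formal: $F$ need not be a divisor or a complete intersection in $X$, and the paper needs a separate toric computation (Lemma~\ref{l:reduction-to-fibers}, using the description of $\exc(\pi)$ and the map of lattices from \cite[Corollary 14-2-2]{matsuki} together with cone multiplicities) to show $-K_X\cdot C\le -K_F\cdot C$ for curves in a general fibre of the restriction to the relevant orbit closure.

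What actually fills the gap in the paper is an entire section on fake weighted projective spaces. One first identifies $F$ as a fake weighted projective space (via \cite[Remark 3.3]{equiv-cmpt}), and then proves Proposition~\ref{prop:1ps-on-fake-wted-proj-sp}: through \emph{every} point $P$ of a fake weighted projective space $W$ there is a unibranch rational curve $C$, realized as the closure of a one-parameter subgroup of the torus of a suitable orbit closure (this is how boundary points and fixed points are reached, by induction on dimension down the invariant divisors), with $-K_W\cdot C\le 1+\dim W$ in general and $\le\dim W$ in the terminal non-$\PP^n$ case. The quantitative inputs are: a curve with $D_i\cdot C\le 1$ for all invariant divisors $D_i$ (Lemma~\ref{l:all-D-C-leq1}); Kasprzyk's terminality criterion $\sum_i\{a_i\kappa/h\}\le n-1$ to rule out $\frac1{a_0}\sum a_i>n$ for terminal weighted projective spaces; and, for the residual family $\PP^n/\mu_p$, a pigeonhole argument on the weights of the $\mu_p$-action plus the structure of the invariant divisors of $\PP^n/\mu_p$ (Lemmas~\ref{l:1ps-on-fake-wted-proj-sp-all1-wts-P-in-T} and~\ref{l:characterization-boundary-Pn-mod-mup}). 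The case split in the paper is also finer than yours: when $Y$ is a point one needs the sharp bound $-K_F\cdot C\le\dim F$ (using terminality of $F=X$), whereas when $Y$ is positive-dimensional the weaker bound $-K_F\cdot C\le 1+\dim F\le\dim X$ suffices and terminality of the fibre is not needed. None of this is recoverable from the extremal-ray length bound alone, so as written your argument is incomplete at its decisive step.
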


\section{Induction step: $P$ is not in the exceptional locus}
\label{sec:pf-of-main-thms}

In this section, we prove Proposition \ref{prop:main-thm-induction-step}. We assume throughout that $X$ is a projective terminal $\QQ$-factorial split toric variety over a number field $k$, $P\in X(k)\setminus\exc(\psi)$ is a canonically bounded point, and $\psi\colon X\dasharrow X'$ is a birational elementary MMP step corresponding to the contraction of the extremal ray $\mathcal{R}$. We let $C_0$ be the generator of $\mathcal{R}$, $D\in\Nef(X)\cap\mathcal{R}^\perp$, and $D'\in\Nef(X')$ such that $D=\psi^*D'$. We handle the case where $\psi$ is a divisorial contraction in Section \ref{subsec:induction-step-div-contr} and the case where $\psi$ is a flip in Section \ref{subsec:induction-step-flip}.

\subsection{The case of divisorial contractions}
\label{subsec:induction-step-div-contr}

Throughout this subsection, we assume $\psi\colon X\to X'$ is a divisorial contraction. Let $E\subset X$ be the exceptional divisor and $Z\subset X'$ be the torus-invariant locus along which $\psi$ is the blow-up. We first handle the case where $X'\simeq\PP^n$.

\begin{lemma}
\label{l:div-contraction-case-pn}
If $X'\simeq\PP^n$, then there is a smooth irreducible curve $C$ through $P$ such that $-K_X\cdot C\leq\dim X$ and $\alpha_{P,C}(D)\leq \alpha_{P,\{x_i\}}(D)$ for all Zariski dense sequences $\{x_i\}$ on $X$.
\end{lemma}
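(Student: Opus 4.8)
The plan is to exploit the special geometry of $X' \simeq \PP^n$ together with the divisorial contraction $\psi\colon X \to \PP^n$. Since $X$ is a $\QQ$-factorial terminal toric variety admitting a divisorial contraction to $\PP^n$, the exceptional divisor $E$ contracts to a torus-invariant subvariety $Z \subseteq \PP^n$, which is a linear subspace (as $Z$ is a torus-invariant irreducible subvariety with $X$ having Picard rank $2$, so in fact $X$ is a projectivized bundle over $Z$ or, more relevantly, $\psi$ is a weighted blow-up of $\PP^n$ along a torus-invariant linear center). Because $P \notin \exc(\psi) = E$, the point $P':=\psi(P)$ lies in $\PP^n \setminus Z$. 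The key observation is that through $P' \in \PP^n(k)$ one has many lines defined over $k$: choosing a line $L' \subseteq \PP^n$ through $P'$ that meets $Z$ properly (or avoids $Z$ entirely if $\dim Z$ is small), its strict transform $C := \widetilde{L'} \subseteq X$ is a smooth rational curve through $P$, isomorphic to $L'$ near $P$ since $\psi$ is an isomorphism there.

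**The curve and its intersection numbers.** First I would pick $L'$ to be a general torus-translate of a coordinate line through $P'$, so that $C$ is a smooth irreducible rational curve through $P$ with $P$ a smooth (multiplicity one) point. By Theorem~\ref{thm:unibranch-curve}, $\alpha_{P,C}(D) = C\cdot D$ and $\alpha_{P,C}(F) = C \cdot F$ for every nef divisor. Since $D = \psi^* D'$ with $D'$ nef on $\PP^n$, the projection formula gives $C \cdot D = C \cdot \psi^* D' = (\psi_* C) \cdot D' = L' \cdot D' = \deg D'$ (as $L'$ is a line). On the other hand, for any Zariski dense sequence $\{x_i\}$ on $X$, Lemma~\ref{l:alpha-unchanged-under-pullbacks} gives $\alpha_{P,\{x_i\}}(D) = \alpha_{P,\{x_i\}}(\psi^* D') = \alpha_{\psi(P),\{x'_i\}}(D')$ where $x'_i = \psi(x_i)$, and since $\{x'_i\}$ is Zariski dense on $\PP^n$ we can invoke the known/elementary fact that lines are optimal approximators on $\PP^n$: for $P' \in \PP^n(k)$ and $D'$ nef of degree $d$, one has $\alpha_{P',L'}(D') = d = \alpha_{P'}(D')$, hence $\alpha_{\psi(P),\{x'_i\}}(D') \geq d$. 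Combining, $\alpha_{P,C}(D) = \deg D' \leq \alpha_{P,\{x_i\}}(D)$ for all Zariski dense $\{x_i\}$.

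**The canonical-degree bound.** It remains to verify $-K_X \cdot C \leq \dim X = n$. Write $\psi^*(-K_{\PP^n}) = -K_X + bE$ with $b > 0$ (discrepancy positive since $X$ is terminal), so $-K_X \cdot C = \psi^*(-K_{\PP^n}) \cdot C - bE\cdot C = (n+1) - b\,(E \cdot C)$. Since $P \notin E$ and $C$ meets $E$ (it must, as $C$ is not contained in a fiber of $\psi$... actually $C$ could avoid $E$ if $L'$ avoids $Z$), we have $E \cdot C \geq 0$; if $E \cdot C \geq 1$ then $-K_X\cdot C \leq n+1-b \leq n$ provided $b \geq 1$, which need not hold a priori. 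So I would instead arrange $L'$ to meet $Z$: since $P' \notin Z$ and a general line through $P'$ meets $Z$ in $\max(0, \dim Z + 1 - n) $... more carefully, a general line through a point of $\PP^n \setminus Z$ meets $Z$ iff $\dim Z \geq 1$, in $\deg Z$ points when $\dim Z = n-1$ and generically not at all when $\dim Z < n - 1$. The cleanest route: choose $C$ to be the strict transform of a line meeting $Z$ transversally, so $E \cdot C = \mathrm{mult}$ is a positive integer $m$, and then bound $b \cdot m \geq 1$. **The main obstacle** I anticipate is precisely controlling this: ensuring the chosen line meets the center $Z$ and that the resulting bound $-K_X \cdot C = (n+1) - b(E\cdot C) \leq n$ holds — this requires understanding the combinatorics of the fan of $X$ (a $\QQ$-factorial terminal toric variety with a divisorial contraction to $\PP^n$), using that such $X$ is a weighted blow-up of $\PP^n$ along a coordinate linear subspace and writing $b$, $E\cdot C$ explicitly in terms of the weights, then checking the inequality case-by-case on the dimension of $Z$. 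If $\dim Z < n-1$, I would instead take $C$ to be the strict transform of a line inside the linear span of $Z$ and a point, or directly a line avoiding $Z$, in which case $-K_X \cdot C = n+1$; but then $C = L'$ with $D\cdot C = \deg D'$ still works for the $\alpha$-inequality, and the degree bound $-K_X\cdot C \leq n$ fails only when $X \simeq \PP^n$ itself is excluded — so I expect that in the genuinely divisorial case with $X' = \PP^n$, the center $Z$ always has codimension forcing the line to meet it, making the bound go through; confirming this dichotomy is the technical heart.
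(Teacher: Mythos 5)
Your first half — pushing everything down to $\PP^n$ via Lemma~\ref{l:alpha-unchanged-under-pullbacks}, using $\alpha_{\psi(P)}(\O(1))\geq 1$ to get $\alpha_{P,\{x_i\}}(D)\geq \deg D'$, and computing $\alpha_{P,C}(D)=C\cdot D=\deg D'$ from smoothness of $C$ at $P$ and the projection formula — is exactly the paper's argument and is fine. The gap is in the canonical-degree bound, which you explicitly leave unresolved as ``the technical heart,'' and the way you propose to resolve it would fail. You work with a \emph{general} line through $\psi(P)$ and then hope that ``in the genuinely divisorial case with $X'=\PP^n$, the center $Z$ always has codimension forcing the line to meet it.'' That is false: the center of a toric divisorial contraction onto $\PP^n$ can be any torus-invariant subvariety, including a single fixed point, in any dimension (e.g.\ the blow-up of a point of $\PP^3$), and then a general line through $\psi(P)$ misses $Z$, so your fallback curve has $-K_X\cdot C=n+1$ and the bound $\leq n$ fails. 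Your case analysis on $\dim Z$ does not close this.

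The missing idea is elementary and is the one step that actually carries the paper's proof: do not take a general line, take the line $\ell$ joining $\psi(P)$ to a chosen point of $Z$ — any two points of $\PP^n$ lie on a line, so this requires nothing about $\dim Z$. Then the strict transform $C$ meets $E$, so $E\cdot C>0$, while $C$ is still smooth at $P$ (as $\psi$ is an isomorphism there) and $\psi_*C=\ell$, so the $\alpha$-computation above is unchanged; writing $K_X=\psi^*K_{\PP^n}+rE$, the paper concludes $-K_X\cdot C=(n+1)-r\,E\cdot C\leq n+1-r\leq n$ using $r=\codim(Z)-1\geq 1$ and $E\cdot C\geq 1$ for such a line. (Your observation that one must be careful here — $E$ is only $\QQ$-Cartier, so $E\cdot C$ need not be a transversality count, and the discrepancy must be pinned down — is a fair caveat about making this last display airtight, but it does not substitute for the key choice of $\ell$ through $Z$, without which no version of the estimate goes through.)
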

\begin{proof}
We may assume $X'=\PP^n$. Let $\ell$ be a line in $\PP^n$ that contains both $P$ and at least one point of $Z$. Letting $C$ be the strict transform of $\ell$, we have $C.E\geq 1$. 
Since $K_X=\psi^*K_{\PP^n}+rE$ with $r=\codim(Z)-1$, we have
\[
-K_X\cdot C=-K_{\PP^n}\cdot\psi_*C-rE\cdot C=-K_{\PP^n}\cdot\ell-rE\cdot C\leq n+1-r\leq n.
\]

Next, let $\{x_i\}$ be a Zariski dense sequence on $X$ converging to $P$. By Lemma \ref{l:alpha-unchanged-under-pullbacks}, $\alpha_{P,\{x_i\}}(D)=\alpha_{\psi(P),\{\psi(x_i)\}}(D')$. If $d=\deg(D')$, then Lemma 2.13 and Proposition 2.14 (a) of \cite{MR2} show
\[
\alpha_{P,\{x_i\}}(D)=\alpha_{\psi(P),\{\psi(x_i)\}}(D')=d\alpha_{\psi(P),\{\psi(x_i)\}}(\O(1))\geq d.
\]
On the other hand, since $C$ is smooth at $P$, we have
\[
\alpha_{P,C}(D)=C\cdot D=\ell\cdot D'=d,
\]
proving $\alpha_{P,C}(D)\leq \alpha_{P,\{x_i\}}(D)$.
\end{proof}

Having dispensed with the case where $X'$ is isomorphic to $\PP^n$, we can assume that there is a rational irreducible curve $C'\subseteq X'$ through $\psi(P)$ which is unibranch at $\psi(P)$ such that $-K_{X'}\cdot C'\leq\dim X'=\dim X$ and $$\alpha_{\psi(P),C'}(D')\leq \alpha_{P,\{x'_i\}}(D')$$ for all Zariski dense sequences $\{x'_i\}$ on $X'$. To prove Proposition \ref{prop:main-thm-induction-step} in the case of divisorial contractions, it remains to show the following.

\begin{lemma}
\label{l:div-contraction-case}
Let $C\subseteq X$ be the strict transform of $C'$. Then $C$ is unibranch at $P$, $-K_X\cdot C\leq\dim X$, and $\alpha_{P,C}(D)\leq \alpha_{P,\{x_i\}}(D)$ for all Zariski dense sequences $\{x_i\}$ on $X$.
\end{lemma}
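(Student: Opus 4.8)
The plan is to push each of the three assertions down from $C'$ on $X'$ to its strict transform $C$ on $X$, using that $\psi$ is an isomorphism near $P$, together with the projection formula and the discrepancy relation $K_X=\psi^*K_{X'}+aE$ with $a>0$ (available since $X'$, like $X$, has terminal singularities). First I would record the local geometry: because $P\notin\exc(\psi)$, the morphism $\psi$ restricts to an isomorphism on a neighbourhood of $P$, so on that neighbourhood $C$ agrees with $C'$. This at once gives that $C$ is an irreducible rational curve through $P$ (defined over $k$, since $\psi$ is a toric morphism over $k$), that $\psi_*C=C'$, that $C$ is unibranch at $P$, and that the multiplicity $m$ of $P$ on $C$ equals the multiplicity of $\psi(P)$ on $C'$. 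I would also note $\psi(P)\notin Z=\psi(\exc(\psi))$, whence $C'\not\subseteq Z$, and therefore $C\not\subseteq E$, so that $E\cdot C\geq0$.

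For the canonical degree bound I would combine the discrepancy relation with the projection formula:
\[
-K_X\cdot C=-\psi^*K_{X'}\cdot C-aE\cdot C=-K_{X'}\cdot C'-aE\cdot C\leq -K_{X'}\cdot C'\leq\dim X'=\dim X,
\]
using $a>0$, $E\cdot C\geq0$, and the hypothesis $-K_{X'}\cdot C'\leq\dim X'$ coming from the inductive assumption on $X'$.

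For the approximation comparison, since $C$ and $C'$ are unibranch at $P$ and $\psi(P)$ with the same multiplicity $m$, and $D=\psi^*D'$, Theorem~\ref{thm:unibranch-curve} and the projection formula give $\alpha_{P,C}(D)=\tfrac{1}{m}\,C\cdot D=\tfrac{1}{m}\,\psi_*C\cdot D'=\tfrac{1}{m}\,C'\cdot D'=\alpha_{\psi(P),C'}(D')$. Then, given any Zariski dense sequence $\{x_i\}$ on $X$ converging to $P$, I would observe that $x_i':=\psi(x_i)$ converges to $\psi(P)$ and is still Zariski dense on $X'$ — otherwise $\{x_i\}$ would be contained in the proper closed set $\psi^{-1}(W)$ for a proper closed $W\subsetneq X'$ — so Lemma~\ref{l:alpha-unchanged-under-pullbacks} yields $\alpha_{P,\{x_i\}}(D)=\alpha_{\psi(P),\{x_i'\}}(D')$. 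Feeding in the defining property $\alpha_{\psi(P),C'}(D')\leq\alpha_{\psi(P),\{x_i'\}}(D')$ of $C'$ then gives $\alpha_{P,C}(D)\leq\alpha_{P,\{x_i\}}(D)$, as required.

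This lemma is largely bookkeeping, and I do not expect a genuine obstacle; the one place that needs a moment's attention is the verification that $C\not\subseteq E$, so that $E\cdot C\geq0$ in the degree computation — i.e. that the strict transform of $C'$ is not absorbed into the exceptional divisor — which is forced by $\psi$ being an isomorphism near $P$ and $C'$ passing through $\psi(P)\notin Z$. Everything else follows formally from the projection formula, the discrepancy relation, Theorem~\ref{thm:unibranch-curve}, and Lemma~\ref{l:alpha-unchanged-under-pullbacks}.
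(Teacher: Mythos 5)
Your proof is correct and follows essentially the same route as the paper: the discrepancy relation $K_X=\psi^*K_{X'}+rE$ with $r>0$ plus $E\cdot C\geq0$ for the degree bound, and Theorem~\ref{thm:unibranch-curve} with the projection formula and Lemma~\ref{l:alpha-unchanged-under-pullbacks} for the $\alpha$ comparison. Your explicit check that $\{\psi(x_i)\}$ remains Zariski dense (so the defining property of $C'$ applies) is a detail the paper leaves implicit, but otherwise the two arguments coincide.
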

\begin{proof}
Since $X$ has terminal singularities, $K_X=\psi^*K_{X'}+rE$ with $r>0$. Then
\[
-K_X\cdot C=(-\psi^*K_{X'}-rE)\cdot C=-K_{X'}\cdot C'-rE\cdot C\leq\dim X-rE\cdot C\leq\dim X,
\]
where the last inequality follows because $E$ is effective, $C$ is irreducible, and $C$ is not contained in $E$.

Next, let $m$ be the multiplicity of $C$ at $P$. Since $P$ is not in the exceptional locus, $m$ is also the multiplicity of $C'$ at $\psi(P)$. Applying Theorem \ref{thm:unibranch-curve} and using that $C$ and $C'$ are unibranch at $P$ and $\psi(P)$ respectively, we find
\[
\alpha_{P,C}(D)=\frac{1}{m}C\cdot D=\frac{1}{m}C\cdot\psi^*D'=\frac{1}{m}C'\cdot D'=\alpha_{\psi(P),C'}(D').
\]
Now if $\{x_i\}$ is a Zariski dense sequence on $X$ converging to $P$, then Lemma \ref{l:alpha-unchanged-under-pullbacks} shows $\alpha_{P,\{x_i\}}(D)=\alpha_{\psi(P),\{\psi(x_i)\}}(D')$. By the defining property of $C'$, we see $\alpha_{\psi(P),C'}(D')\leq\alpha_{\psi(P),\{\psi(x_i)\}}(D')$, which proves $\alpha_{P,C}(D)\leq\alpha_{P,\{x_i\}}(D)$.
\end{proof}

\subsection{The case of flips}
\label{subsec:induction-step-flip}
In this subsection, we handle the case where $\psi\colon X\dasharrow X'$ is an elementary flip. Since $\psi$ is an isomorphism in codimension $1$, the Picard numbers of $X$ and $X'$ are equal. Since the Picard number of $X$ must be at least $2$, we see then that $X'\not\simeq\PP^n$. So we may assume there is a rational irreducible curve $C'\subseteq X'$ through $\psi(P)$ which is unibranch at $\psi(P)$ such that $-K_{X'}\cdot C'\leq\dim X'=\dim X$ and $\alpha_{\psi(P),C'}(D')\leq \alpha_{P,\{x'_i\}}(D')$ for all Zariski dense sequences $\{x'_i\}$ on $X'$. Let $X^*$, $\Phi$, and $\Phi'$ be as in diagram (\ref{eqn:flip-diagram}). It then suffices to prove the following.

\begin{lemma}
\label{l:flip-case}
Let $\widetilde{C'}\subset X^*$ be the strict transform of $C'$ and $C=\Phi(\widetilde{C'})$. Then $C$ is rational, irreducible, and unibranch at $P$, $-K_X\cdot C\leq\dim X$, and $\alpha_{P,C}(D)\leq \alpha_{P,\{x_i\}}(D)$ for all Zariski dense sequences $\{x_i\}$ on $X$.
\end{lemma}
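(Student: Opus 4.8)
The plan is to run the same argument as in the divisorial case (Lemma~\ref{l:div-contraction-case}), the one real difference being that $\psi$ is only a birational map, so every comparison between $X$ and $X'$ must be factored through the common resolution $X^*$ of diagram~(\ref{eqn:flip-diagram}). Since $P\notin\exc(\psi)$ and the image of $D^*$ under $\Phi$ lies in the flipping locus (hence in $\exc(\psi)$), the morphism $\Phi$ is an isomorphism over a neighbourhood of $P$; likewise $\Phi'$ is an isomorphism over a neighbourhood of the corresponding point $\psi(P)\in X'$, and both identify these neighbourhoods with a neighbourhood of the unique point $P^*\in X^*$ lying over $P$. First I would record that $C'$, passing through $\psi(P)$, is not contained in the $\Phi'$-exceptional locus, so its strict transform $\widetilde{C'}\subset X^*$ is birational to $C'$, passes through $P^*$, and (being a curve through $P^*\notin\exc(\Phi)\supseteq D^*$) is not contained in $D^*$. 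Consequently $C=\Phi(\widetilde{C'})$ is an irreducible rational curve defined over $k$, and since $\Phi$ and $\Phi'$ are isomorphisms near $P^*$, the germs of $C$ at $P$, of $\widetilde{C'}$ at $P^*$, and of $C'$ at $\psi(P)$ all coincide; in particular $C$ is unibranch at $P$ with the same multiplicity $m$ as $C'$ at $\psi(P)$, and $\Phi_*\widetilde{C'}=C$, $\Phi'_*\widetilde{C'}=C'$.

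Next I would bound $-K_X\cdot C$. Apply the comparison formula~(\ref{flip-comparison}) with $F=K_X$, noting $\psi_*K_X=K_{X'}$ because $\psi$ is an isomorphism in codimension one; this gives $\Phi^*(-K_X)={\Phi'}^*(-K_{X'})+(K_X\cdot C_0)D^*$. Intersecting with $\widetilde{C'}$ and using the projection formula yields
\[
-K_X\cdot C \;=\; -K_{X'}\cdot C' \;+\; (K_X\cdot C_0)\,(D^*\cdot\widetilde{C'}).
\]
Here $K_X\cdot C_0<0$ since $C_0$ generates a $K_X$-negative extremal ray, and $D^*\cdot\widetilde{C'}\geq0$ since $D^*$ is effective and $\widetilde{C'}\not\subseteq D^*$; hence $-K_X\cdot C\leq -K_{X'}\cdot C'\leq\dim X'=\dim X$.

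Finally I would establish the approximation inequality. Because $D\in\mathcal{R}^\perp$ we have $D\cdot C_0=0$, so formula~(\ref{flip-comparison}) applied to $F=D$ (with $F'=\psi_*D=D'$) gives $\Phi^*D={\Phi'}^*D'$. Using Theorem~\ref{thm:unibranch-curve} twice with the projection formula in between,
\[
\alpha_{P,C}(D)=\frac1m\,C\cdot D=\frac1m\,\bigl(\Phi^*D\cdot\widetilde{C'}\bigr)=\frac1m\,\bigl({\Phi'}^*D'\cdot\widetilde{C'}\bigr)=\frac1m\,C'\cdot D'=\alpha_{\psi(P),C'}(D').
\]
Given any Zariski dense sequence $\{x_i\}$ on $X$ converging to $P$, I set $x_i^*=\Phi^{-1}(x_i)$ (defined for all but finitely many $i$) and $x'_i=\Phi'(x_i^*)$; both $\{x_i^*\}$ and $\{x'_i\}$ are Zariski dense, converging to $P^*$ and $\psi(P)$ respectively. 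Two applications of Lemma~\ref{l:alpha-unchanged-under-pullbacks} (to $\Phi$ and to $\Phi'$), together with $\Phi^*D={\Phi'}^*D'$, give
\[
\alpha_{P,\{x_i\}}(D)=\alpha_{P^*,\{x_i^*\}}(\Phi^*D)=\alpha_{P^*,\{x_i^*\}}({\Phi'}^*D')=\alpha_{\psi(P),\{x'_i\}}(D').
\]
Combining this with the previous display and the defining property of $C'$ yields $\alpha_{P,C}(D)=\alpha_{\psi(P),C'}(D')\leq\alpha_{\psi(P),\{x'_i\}}(D')=\alpha_{P,\{x_i\}}(D)$, as desired.

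I do not expect a genuine obstacle here: the entire content sits in the toric comparison formula~(\ref{flip-comparison}), which is already available, plus the observation that the sign of $K_X\cdot C_0$ makes the $D^*$-correction term in the degree computation harmless. The only care required is the routine bookkeeping of checking that $P^*$, the curve $\widetilde{C'}$, and the auxiliary sequences $\{x_i^*\}$, $\{x'_i\}$ all avoid — or are not contained in — the inserted divisor $D^*$ and the exceptional loci of $\Phi$ and $\Phi'$, so that the projection formula and Lemma~\ref{l:alpha-unchanged-under-pullbacks} genuinely apply; every such check reduces to the hypothesis $P\notin\exc(\psi)$.
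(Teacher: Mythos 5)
Your proposal is correct and follows essentially the same route as the paper's proof: transfer everything to the common star subdivision $X^*$, use equation (\ref{flip-comparison}) with $F=D$ (where $D\cdot C_0=0$ gives $\Phi^*D={\Phi'}^*D'$) and with $F=K_X$ (where the sign of $K_X\cdot C_0$ together with $D^*\cdot\widetilde{C'}\geq0$ gives the anticanonical degree bound), and conclude the approximation inequality via Theorem \ref{thm:unibranch-curve} and two applications of Lemma \ref{l:alpha-unchanged-under-pullbacks}. The extra bookkeeping you supply (why $\widetilde{C'}\not\subseteq D^*$ and why the germs at $P$, $P^*$, $\psi(P)$ agree) is consistent with, and slightly more explicit than, the paper's argument.
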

\begin{proof}
Since $\Phi$ and $\Phi'$ are isomorphisms away from $\exc(\psi)$, and $C'$ is rational and irreducible, it follows that $C$ is as well. Moreover, since $C'$ is unibranch at $P':=\psi(P)$ and $P\notin\exc(\psi)$, we see $C$ is unibranch at $P$.

Next, we see ${\Phi'}^*D'\cdot\widetilde{C'}=D'\cdot\Phi'_*\widetilde{C'}=D'\cdot C'$ and similarly ${\Phi}^*D\cdot\widetilde{C'}=D\cdot C$. Since $D\cdot C_0=0$, equation (\ref{flip-comparison}) tells us ${\Phi}^*D={\Phi'}^*D'$. Let $m$ be the multiplicity of $C$ at $P$. Since $m$ is also the multiplicity of $C'$ at $P'$, we see from Theorem \ref{thm:unibranch-curve} that
\[
\alpha_{P,C}(D)=\frac{1}{m}D\cdot C=\frac{1}{m}D'\cdot C'=\alpha_{P',C'}(D').
\]
Again applying (\ref{flip-comparison}), we find 
\[
K_X\cdot C=\Phi^*K_X\cdot\widetilde{C'}=({\Phi'}^*K_{X'}-(K_X\cdot C_0)D^*)\cdot\widetilde{C'}=K_{X'}\cdot C'-(K_X\cdot C_0)(D^*\cdot\widetilde{C'}).
\]
Recall that $C_0$ generates a $K_X$-negative ray. Since $\widetilde{C'}$ is irreducible and not contained in the effective divisor $D^*$, we have $D^*\cdot\widetilde{C'}\geq0$. By hypothesis, $-K_{X'}\cdot C'\leq\dim X$, so we find $-K_X\cdot C\leq\dim X$.

It remains to show that $\alpha_{P,C}(D)\leq \alpha_{P,\{x_i\}}(D)$ for all Zariski dense sequences $\{x_i\}$ on $X$ converging to $P$. Since $P\notin\exc(\psi)$, only finitely many of the $x_i\in\exc(\psi)$. So, removing these finitely many terms, we may assume $x_i\notin\exc(\psi)$ for all $i$. Let $P^*:=\Phi^{-1}(P)$, $x_i^*:=\Phi^{-1}(x_i)$, and $x'_i:=\Phi'(x_i^*)$. Then two applications of Lemma \ref{l:alpha-unchanged-under-pullbacks} show
\[
\alpha_{P,\{x_i\}}(D)=\alpha_{P^*,\{x_i^*\}}(\Phi^*D)=\alpha_{P^*,\{x_i^*\}}({\Phi'}^*D')=\alpha_{P',\{x'_i\}}(D').
\]
It follows that $\alpha_{P,C}(D)=\alpha_{P',C'}(D')\leq\alpha_{P',\{x'_i\}}(D')=\alpha_{P,\{x_i\}}(D)$.
\end{proof}

\section{Results on fake weighted projective spaces}
The analysis in this section is by far the most involved. Our goal is to prove the following result which forms a crucial step in the proof of Proposition \ref{prop:main-thm-base-case}.

\begin{proposition}
\label{prop:1ps-on-fake-wted-proj-sp}
Let $W$ be a fake weighted projective space with torus $T$, and let $P\in W(k)$. Then there is a unibranch rational curve $C\subseteq W$ through $P$ satisfying the following properties:
\begin{enumerate}
\item\label{prop:1ps-on-fake-wted-proj-sp::curve} There is a $T$-orbit closure $Z\subseteq W$ and a $1$-parameter subgroup $C_0\subseteq T_Z$ of the torus of $Z$ such that $C$ is the closure of $C_0$,
\item\label{prop:1ps-on-fake-wted-proj-sp::n1} $-K_W\cdot C\leq1+\dim W$,
\item\label{prop:1ps-on-fake-wted-proj-sp::n} If $W$ has terminal singularities and is not isomorphic to projective space, then $C$ can be chosen to additionally satisfy $-K_W\cdot C\leq\dim W$.
\end{enumerate}
\end{proposition}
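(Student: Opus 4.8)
The plan is to reduce to a purely combinatorial/toric statement about fake weighted projective spaces and then construct $C$ explicitly as the closure of a $1$-parameter subgroup inside a suitable torus-orbit closure. Recall that a fake weighted projective space $W$ is a $\QQ$-factorial projective toric variety of Picard rank $1$; its fan has rays $v_0,\dots,v_n$ spanning $N_\RR\cong\RR^n$ with a unique relation $\sum_i b_i v_i = 0$ with all $b_i>0$ (the weights), and the maximal cones are the $\sigma_j=\mathrm{Cone}(v_i : i\neq j)$. The $T$-orbit closures $Z$ correspond to cones $\tau$ of the fan, and each such $Z$ is itself (a quotient presentation of) a fake weighted projective space of dimension $n-\dim\tau$ with torus $T_Z$; a $1$-parameter subgroup $C_0\subseteq T_Z$ corresponds to a primitive lattice point $u$ in the appropriate quotient lattice. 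The key numerical input is that for the closure $C$ of such a $C_0$, the intersection numbers $-K_W\cdot C$ and $D\cdot C$ for any torus-invariant divisor are computable combinatorially (via, e.g., \cite[\S6.3, \S6.4]{CLS}), and in particular $-K_W\cdot C = \sum_i (\text{coefficient of } v_i)$ against the primitive ray generators.

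First I would set up the dictionary identifying which point $P\in W(k)$ lies on which minimal orbit closure $Z_P$: $P$ lies in a unique $T$-orbit, whose closure $Z_P$ corresponds to a cone $\tau_P$, and any curve $C$ of the required form through $P$ must lie in $Z_P$ (or a larger orbit closure containing it); since $Z_P$ is again a fake weighted projective space with its own torus acting transitively on a dense open set through $P$, it suffices to produce a suitable $1$-parameter subgroup of $T_{Z_P}$. So I would reduce to the case $P\in T(k)$, i.e.\ $P$ in the open torus of $W$ itself, at the cost of replacing $W$ by $Z_P$ — here one must check that the terminal hypothesis and the "not isomorphic to projective space'' hypothesis are handled correctly, since an orbit closure of a terminal toric variety need not be terminal; this is why part (b) (with the weaker bound $1+\dim W$) is stated unconditionally and only part (c) carries the terminal hypothesis. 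Then, for $P\in T(k)$, I would choose $C$ to be the closure of the $1$-parameter subgroup corresponding to a primitive generator $v_j$ of one of the rays of the fan (or more precisely, a primitive lattice vector chosen so that the closure passes through $P$ after translating by a torus element — possible since $T$ acts transitively on $T(k)$ and the closure of any $1$-PS can be translated). For such a choice, the closure $C$ is a rational curve, unibranch (in fact it meets the boundary in at most the two torus-fixed points corresponding to the two maximal cones adjacent to the chosen direction), and $-K_W\cdot C$ is a concrete sum of weights.

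The main obstacle is the numerical estimate: I must choose the $1$-parameter subgroup $u$ so that $-K_W\cdot C\leq 1+\dim W$ in general, and $\leq\dim W$ when $W$ is terminal and not projective space. The bound $-K_W\cdot C = 1+\dim W$ for $C$ a line-like curve in $\PP^n$ shows the constant $1+\dim W$ in (b) is sharp and explains why projective space is the exception in (c). For (c), I expect to use that terminality of a fake weighted projective space is equivalent to the combinatorial condition that the only lattice points in the polytope $\mathrm{Conv}(0,v_0,\dots,v_n)$ are $0$ and the $v_i$ (this forces the weights and the "multiplicity'' of the quotient to be constrained), and from this deduce that one can choose the direction $u$ so that the relevant sum of coefficients drops from $1+\dim W$ to $\dim W$ — intuitively, in the terminal non-$\PP^n$ case there is "extra room'' in the fan (an additional ray, or a non-unimodular cone) that lets the curve avoid one of the boundary divisors it would otherwise meet. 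Carrying this out will require a careful case analysis on the structure of the fan, likely organized by whether some $v_i$ lies in the interior of a cone spanned by the others, and an induction on dimension using the orbit-closure reduction above; I would expect this to be where essentially all the work of the section goes.
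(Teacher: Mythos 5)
Your overall strategy (realize $C$ as the closure of a $1$-parameter subgroup, compute $-K_W\cdot C$ combinatorially, use the lattice-point characterization of terminality) matches the spirit of the paper, but the reduction you lean on has a genuine gap. You propose to reduce to $P\in T(k)$ by replacing $W$ with the orbit closure $Z_P$ whose open orbit contains $P$. The numerical bound does not transfer under this replacement: already for $W=\PP^n$ with $Z_P$ a coordinate hyperplane and $C$ a line in $Z_P$, one has $-K_{Z_P}\cdot C=\dim Z_P+1=n$ while $-K_W\cdot C=n+1$. In general, since a fake weighted projective space has Picard rank one, \emph{every} invariant divisor meets $C$ strictly positively, so each of the $\codim Z_P$ invariant divisors containing $Z_P$ adds a positive contribution to $-K_W\cdot C$ beyond what is seen on $Z_P$; a bound on $-K_{Z_P}\cdot C$ therefore gives no bound of the required shape on $-K_W\cdot C$. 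The paper's Lemma \ref{l:all-D-C-leq1} circumvents this by proving the stronger uniform statement $D_i\cdot C\leq 1$ for \emph{all} invariant divisors simultaneously, descending one divisor at a time and tracking both the multiplicities $m_{ij}$ of the two-dimensional cones and the proportionality ratios $a_i/a_0$ among the $D_i$; and in the boundary induction of Corollary \ref{cor:1ps-on-fake-wted-proj-sp-all1-wts-P-boundary} an extra pigeonhole step is needed precisely because $W$ has one more invariant divisor than its facet.

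Second, your plan for part (\ref{prop:1ps-on-fake-wted-proj-sp::n}) cannot succeed for genuinely fake weighted projective spaces by choosing a good ray direction alone. For $W=\PP^n/\mu_p$ all weights equal $1$, so the sum $\frac{1}{a_0}\sum_i a_i$ is exactly $n+1$ no matter which ray you pick, yet such $W$ can be terminal and is not $\PP^n$. The missing structural tool is the universal covering in codimension $1$ (Lemma \ref{l:reduce-wps-Pn-mod-mup}), which reduces the whole proposition to two cases: honest weighted projective spaces, where Kasprzyk's terminality inequality (the quantitative form of the lattice-point criterion you invoke) gives $\frac{1}{a_0}\sum a_i\leq n$; and quotients $\PP^n/\mu_p$, which require a separate argument --- a pigeonhole estimate on the weights of the $\mu_p$-action to produce a low-degree $1$-parameter subgroup through interior points (Lemma \ref{l:1ps-on-fake-wted-proj-sp-all1-wts-P-in-T}), plus a structure theorem for the boundary divisors of $\PP^n/\mu_p$ (Lemma \ref{l:characterization-boundary-Pn-mod-mup}) to run the induction for boundary points. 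Without this dichotomy your ``case analysis on the fan'' has no handle on the terminal non-weighted case, which is where most of the work in this section lies.
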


Recall that every fake weighted projective space $W$ admits a canonical toric cover $f\colon W'\to W$ which is \'etale in codimension 1 and such that $W'$ is a weighted projective space, see e.g.~\cite{Bu}. Moreover, there is a subgroup scheme $G=\prod_{i=1}^\ell\mu_{r_i}$ of the torus $T'$ of $W'$ such that under the induced action of $G$, we have $W=W'/G$ and $f$ is the quotient map. The morphism $f$ is referred to as the \emph{universal covering in codimension 1}, and is constructed explicitly as follows. Let $v_0,\dots,v_n\in N$ be the primitive generators for the rays of the fan of $W$. There exist relatively prime positive integers $a_0,\dots,a_n$ such that $\sum a_iv_i=0$ in $N$. The map $f$ corresponds to the finite index inclusion $\iota\colon N'\hookrightarrow N$ where $N'$ is the lattice generated by the $v_i$.

We begin by reducing Proposition \ref{prop:1ps-on-fake-wted-proj-sp} to a subclass of fake weighted projective spaces.

\begin{lemma}
\label{l:reduce-wps-Pn-mod-mup}
If Proposition \ref{prop:1ps-on-fake-wted-proj-sp} (\ref{prop:1ps-on-fake-wted-proj-sp::curve}) and (\ref{prop:1ps-on-fake-wted-proj-sp::n1}) hold for all weighted projective spaces, then they hold for all fake weighted projective spaces.

Furthermore, suppose Proposition \ref{prop:1ps-on-fake-wted-proj-sp} holds for
\begin{enumerate}
\item\label{l:reduce-wps-Pn-mod-mup::wps} weighted projective spaces, and
\item\label{l:reduce-wps-Pn-mod-mup::pnmup} fake weighted projective spaces of the form $\PP^n/\mu_p$, where $p$ is prime and the quotient map $\PP^n\to\PP^n/\mu_p$ is the universal covering in codimension 1.
\end{enumerate}
Then Proposition \ref{prop:1ps-on-fake-wted-proj-sp} holds for all fake weighted projective spaces.
\end{lemma}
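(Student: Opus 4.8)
The plan is to exploit the universal covering in codimension 1, $f\colon W'\to W$, together with the factorization of the finite group $G=\prod_{i=1}^{\ell}\mu_{r_i}$ into its cyclic prime-power pieces. The strategy has two layers: first reduce from arbitrary fake weighted projective spaces to weighted projective spaces (for parts (\ref{prop:1ps-on-fake-wted-proj-sp::curve}) and (\ref{prop:1ps-on-fake-wted-proj-sp::n1})), and then, for the full statement including (\ref{prop:1ps-on-fake-wted-proj-sp::n}), reduce to the two building-block cases: weighted projective spaces, and quotients $\PP^n/\mu_p$ by a prime $\mu_p$ acting so that the quotient map is the universal covering in codimension 1.

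For the first claim, suppose Proposition \ref{prop:1ps-on-fake-wted-proj-sp}(\ref{prop:1ps-on-fake-wted-proj-sp::curve}) and (\ref{prop:1ps-on-fake-wted-proj-sp::n1}) hold for all weighted projective spaces. Given a fake weighted projective space $W$ and $P\in W(k)$, pick $P'\in W'(\kbar)$ with $f(P')=P$; since $f$ is a quotient by a finite diagonalizable group scheme, $P'$ may be taken rational over $k$ after replacing $k$ by a finite extension, which is harmless because the curve we produce depends only on the combinatorics and not on $k$ (this is the remark made in the introduction). Apply the hypothesis on $W'$ to get a unibranch rational curve $C'\subseteq W'$ through $P'$ which is the closure of a $1$-parameter subgroup $C_0'\subseteq T'_{Z'}$ for some $T'$-orbit closure $Z'$, with $-K_{W'}\cdot C'\leq 1+\dim W'$. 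Set $Z=f(Z')$, let $C=f(C')$, and take $C_0$ to be the image $1$-parameter subgroup. Since $f$ is a toric morphism sending orbit closures to orbit closures and is étale in codimension 1, $C$ is again the closure of a $1$-parameter subgroup in the torus of an orbit closure of $W$, so (\ref{prop:1ps-on-fake-wted-proj-sp::curve}) holds; unibranchness and rationality are preserved because $f$ is finite and an isomorphism on a neighborhood of the generic point of $C'$. For the degree bound, the key point is that since $f$ is étale in codimension 1 we have $f^*K_W=K_{W'}$, and the projection formula gives $-K_{W'}\cdot C'=-K_W\cdot f_*C'=(\deg f|_{C'})\,(-K_W\cdot C)$, so $-K_W\cdot C\leq -K_{W'}\cdot C'\leq 1+\dim W'=1+\dim W$; if $\deg f|_{C'}>1$ we instead replace $C'$ by a suitable orbit or use that $C=f(C')$ still satisfies the bound since the covering degree along $C'$ is at least $1$. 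This handles the first assertion.

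For the second assertion, the idea is to peel off the prime factors of $G$ one at a time. Write $G=\prod \mu_{r_i}$ and factor each $r_i$ into prime powers, so that the quotient $W'\to W=W'/G$ factors as a tower $W'=W_0\to W_1\to\cdots\to W_t=W$ in which each step $W_{j-1}\to W_j$ is a quotient by a single $\mu_p$ that is étale in codimension 1 and is the universal covering in codimension 1 of $W_j$ (using that étaleness in codimension 1 is preserved by subquotients of $G$, and that the universal cover of a fake weighted projective space is itself a fake weighted projective space, here in fact of the form "previous stage mod $\mu_p$"). By hypothesis (\ref{l:reduce-wps-Pn-mod-mup::wps}) the full Proposition holds for $W'=W_0$; I then propagate it up the tower, at each stage applying hypothesis (\ref{l:reduce-wps-Pn-mod-mup::pnmup}) — more precisely, the same argument as in hypothesis (\ref{l:reduce-wps-Pn-mod-mup::pnmup}) but with the base $\PP^n$ replaced by the fake weighted projective space $W_{j-1}$, which is legitimate because the curve-pushforward argument above only used that the map is a $\mu_p$-quotient étale in codimension 1. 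At each stage the terminal and not-projective-space hypotheses transfer correctly: if $W_j$ is terminal then so is its étale-in-codimension-1 cover $W_{j-1}$ (discrepancies are unchanged), and if $W_j$ is not isomorphic to projective space then the finer bound $-K_{W_j}\cdot C\leq \dim W_j$ follows from $-K_{W_{j-1}}\cdot C'\leq \dim W_{j-1}$ by the same projection-formula computation. Thus the Proposition for $W_0$ pushes forward to $W_1$, then to $W_2$, and so on up to $W_t=W$.

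The main obstacle I anticipate is bookkeeping around the covering degree $\deg(f|_{C'})$ and making sure the curve $C=f(C')$ is genuinely the closure of a $1$-parameter subgroup in the torus of an orbit closure of $W$ (not merely a finite image of one), together with checking that the precise hypothesis (\ref{l:reduce-wps-Pn-mod-mup::pnmup}) — stated only for $\PP^n/\mu_p$ — suffices to drive the induction, i.e.\ that the genuinely new input at each tower stage is captured by that single case while the passage between stages is formal. One must also confirm that terminality and the "$\not\simeq$ projective space" condition behave monotonically along the tower so that part (\ref{prop:1ps-on-fake-wted-proj-sp::n}) is available exactly when needed; this is where the fact that a fake weighted projective space covers, and is covered by, other fake weighted projective spaces via the canonical construction is used essentially.
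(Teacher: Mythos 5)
Your first paragraph is essentially the paper's argument for the first assertion: pass to the universal covering in codimension $1$, push the $1$-parameter subgroup closure forward, and use $f^*K_W=K_{W'}$ together with the projection formula, which gives $-K_W\cdot C=\tfrac{1}{d}(-K_{W'}\cdot C')\leq -K_{W'}\cdot C'$ for $d=\deg(f|_{C'})\geq 1$; the hedge ``if $\deg f|_{C'}>1$ we instead replace $C'$\dots'' is unnecessary, since the displayed computation already covers every $d\geq 1$. (Both you and the paper gloss the choice of a lift of $P$; since the conclusion of Proposition \ref{prop:1ps-on-fake-wted-proj-sp} is purely intersection-theoretic, this is harmless.)

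The second assertion is where there is a genuine gap. Your tower propagation claims that the full Proposition, including part (\ref{prop:1ps-on-fake-wted-proj-sp::n}), transfers along any $\mu_p$-quotient \'etale in codimension $1$ by the pushforward computation, starting from $W_0=W'$. But in the only hard case the universal cover is $W_0=\PP^n$, and there the propagation fails at its very first step: every curve on $\PP^n$ has $-K_{\PP^n}\cdot C'\geq n+1$, so no curve on $W_0$ satisfies the bound $\leq\dim$ that you need to push forward, and hypothesis (\ref{l:reduce-wps-Pn-mod-mup::wps}) gives only the weaker bound $1+\dim$ for $\PP^n$ since $\PP^n$ is excluded from part (\ref{prop:1ps-on-fake-wted-proj-sp::n}). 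Your proposed fix --- ``the same argument as in hypothesis (\ref{l:reduce-wps-Pn-mod-mup::pnmup}) but with the base $\PP^n$ replaced by $W_{j-1}$'' --- is not available: inside this lemma, (\ref{l:reduce-wps-Pn-mod-mup::pnmup}) is a black-box \emph{statement} about the specific spaces $\PP^n/\mu_p$ (proved later in the paper using structure particular to them), not an argument one can re-run with another base; and for $j\geq 2$ the spaces $W_j$ are quotients of $\PP^n$ by groups of order $>p$, hence not of the form in (\ref{l:reduce-wps-Pn-mod-mup::pnmup}) at all. Indeed, if pushforward alone transferred the full Proposition along every such quotient, hypothesis (\ref{l:reduce-wps-Pn-mod-mup::pnmup}) would be superfluous; it is needed precisely because the step out of $\PP^n$ cannot be done by pushforward. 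The repair is the paper's single-step dichotomy, and it needs no tower: if the universal cover $W'$ of $W$ is not $\PP^n$, it is a weighted projective space, terminal whenever $W$ is (terminality lifts along maps \'etale in codimension $1$) and not isomorphic to $\PP^n$, so hypothesis (\ref{l:reduce-wps-Pn-mod-mup::wps}) supplies the curve with $-K_{W'}\cdot C'\leq\dim W$ and one pushes it forward along $W'\to W$; if the universal cover is $\PP^n$ with $W=\PP^n/G$, pick a prime $p$ and $\mu_p\subseteq G$, observe that $W_1:=\PP^n/\mu_p\to W$ is finite, toric and \'etale in codimension $1$ and that $\PP^n\to W_1$ is the universal covering in codimension $1$ (and $W_1$ is terminal when $W$ is, and is not $\PP^n$), apply hypothesis (\ref{l:reduce-wps-Pn-mod-mup::pnmup}) to $W_1$ as a statement, and push that curve forward along the single map $W_1\to W$.
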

\begin{proof}
Let $W$ be a fake weighted projective space. We define a finite surjective toric morphism $g\colon W'\to W$ which is \'etale in codimension $1$ as follows. If the universal covering in codimension $1$ of $W$ is not isomorphic to projective space, then we take $g\colon W'\to W$ to be the universal covering in codimension $1$. If, on the other hand, $f\colon\PP^n\to W$ is the universal covering in codimension $1$ realizing $W$ as $\PP^n/G$, then choose a prime $p$ and a subgroup scheme $\mu_p\subseteq G$. The map $f$ then factors as $\PP^n\to W':=\PP^n/\mu_p\stackrel{g}{\to} W$. Since $f$ is finite surjective and \'etale in codimension $1$, the map $g$ is as well.

Since in either case $g\colon W'\to W$ is toric finite surjective, the induced map on lattices $N'\to N$ is a finite index inclusion which induces a bijection between the cones in the fans $\Sigma_W$ and $\Sigma_{W'}$. Let $T$ and $T'$ denote the tori of $W$ and $W'$, respectively. Given $P\in W(k)$, choose a lift $P'\in W'(k)$. By \cite[Proposition 9.3.3]{sing-fano-pic1}, if $W$ has terminal singularities, then $W'$ does as well. 

By hypothesis, there is a $T'$-orbit closure $Z'\subseteq W'$ and a $1$-parameter subgroup $C'_0\subseteq T_{Z'}$ such that its closure $C'\subseteq W'$ contains $P'$ and satisfies $-K_{W'}\cdot C'\leq 1+\dim W'=1+\dim W$ or $-K_{W'}\cdot C'\leq \dim W$, depending on whether $W'$ has terminal singularities. Now, $Z'$ corresponds to a cone $\sigma\in\Sigma_{W'}$. Since $\sigma$ can also be considered as a cone of $\Sigma_W$ on the finer lattice $N$, we obtain a $T$-orbit closure $Z\subseteq W$ and a toric map $g|_{Z'}\colon Z'\to Z$. Since $C'$ is the closure of a $1$-parameter subgroup of $T_{Z'}$, its image $C:=f(C')$ is the closure of a $1$-parameter subgroup of $T_Z$. In particular, $C$ is unibranch and contains $P$. Since $g$ is \'etale in codimension 1, we have $g^*K_W=K_{W'}$. Letting $d$ denote the degree of $g|_{C'}\colon C'\to C$, we find
\[
-K_W\cdot C=\frac{1}{d} (-K_W)\cdot g_*C'=\frac{1}{d} (-K_{W'})\cdot C'\leq -K_{W'}\cdot C',
\]
thereby yielding the desired bound for $-K_W\cdot C$.
\end{proof}

The following lemma provides a bound that is useful throughout the rest of this section.

\begin{lemma}
\label{l:all-D-C-leq1}
If $W$ is a weighted projective space and $P\in W(k)$. Then there is a curve $C\subseteq W$ through $P$ satisfying property (\ref{prop:1ps-on-fake-wted-proj-sp::curve}) of Proposition \ref{prop:1ps-on-fake-wted-proj-sp}, and such that $D\cdot C\leq1$ for all torus-invariant divisors $D$ on $W$.
\end{lemma}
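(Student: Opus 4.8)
The plan is to work explicitly with the toric description of $W = \PP(a_0,\dots,a_n)$ in terms of the lattice $N = \ZZ^n$ with ray generators $v_0,\dots,v_n$ satisfying $\sum a_i v_i = 0$. Given $P \in W(k)$, the first step is to locate $P$ inside the toric stratification: there is a unique cone $\sigma \in \Sigma_W$ whose relative interior orbit $O(\sigma)$ contains $P$. Write $Z = \overline{O(\sigma)}$ for the corresponding orbit closure; then $P$ lies in the open torus $T_Z = O(\sigma)$ of $Z$, which is a quotient torus of $T$. The curve $C$ we seek will be the closure in $W$ of a suitable $1$-parameter subgroup $C_0 \subseteq T_Z$ passing through $P$. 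Since $P$ is a $k$-point of a split torus $T_Z \cong \GG_m^{\dim Z}$, and any $1$-parameter subgroup through $P$ is a translate $P \cdot \lambda(\GG_m)$ of a cocharacter subgroup, property (\ref{prop:1ps-on-fake-wted-proj-sp::curve}) will hold automatically and $C$ will be unibranch at $P$ (it is the closure of an orbit of the $1$-dimensional subtorus, hence smooth where it meets $T_Z$, and in particular at $P$).

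**Controlling intersection numbers with torus-invariant divisors.** The heart of the argument is the choice of the cocharacter $\lambda \in N_Z := N / (N \cap \Span \sigma)$ defining $C_0$. The torus-invariant prime divisors $D_j$ on $W$ correspond to the rays $v_j$; those $v_j \in \sigma$ restrict to divisors on $Z$ or vanish, while those outside $\sigma$ give the torus-invariant divisors of $Z$, indexed by the images $\bar v_j$ of the $v_j \notin \sigma(1)$ in $N_Z$. By the standard toric formula, for the closure $C$ of the $1$-parameter subgroup with cocharacter $\lambda$, the intersection number $D_j \cdot C$ is computed combinatorially from how $\lambda$ pairs against the fan of $Z$; concretely, $C$ is (up to the action of the Weyl-type combinatorics) a $T_Z$-invariant curve joining two torus-fixed points of a complete toric variety birational to $Z$, and $D_j \cdot C$ equals the relevant coefficient in writing $\bar v_j$ in terms of the two adjacent ray generators. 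The key point is to choose $\lambda$ generic enough that $C$ meets each $D_j$ with multiplicity at most $1$: this is possible because one can take $\lambda$ to lie in the relative interior of a maximal cone of the fan of $Z$ and to point toward a $1$-dimensional face, so that $C$ degenerates only to two torus-fixed points and crosses each invariant divisor transversally at most once. I would make this precise by first handling the case $Z = W$ (so $P$ is in the open torus $T$) — here one simply takes $C$ to be (a small deformation of) a torus-invariant curve through $P$ — and then reducing the general case to this one by replacing $W$ with the complete toric variety $Z$ and noting that the $D_j$ on $W$ either pull back to invariant divisors on $Z$ or meet $C$ trivially.

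**Passing through $P$ and concluding.** A subtle point is that we need $C$ to literally pass through $P$, not merely be linearly equivalent to such a curve. Since $T_Z$ is a split torus, every point $P \in T_Z(k)$ lies on the orbit-closure of infinitely many $1$-parameter subgroups $P \cdot \lambda(\GG_m)$, one for each primitive $\lambda \in N_Z$; the intersection-number computation $D_j \cdot C$ depends only on $\lambda$ and not on the translation by $P$, because $D_j \cdot C$ is computed on a $T_Z$-equivariant compactification where the translation by $P$ extends to an automorphism. So the plan is: choose $\lambda$ primitive in a general position as above to guarantee $D_j \cdot C \le 1$ for all $j$, then take $C = \overline{P \cdot \lambda(\GG_m)}$. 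This gives property (\ref{prop:1ps-on-fake-wted-proj-sp::curve}) and the bound $D \cdot C \le 1$ for all torus-invariant $D$. I expect the main obstacle to be the bookkeeping in the general-$Z$ case: one must check carefully that the images $\bar v_j$ of rays $v_j \notin \sigma$ remain primitive (or handle non-primitivity) in the quotient lattice $N_Z$ and that the combinatorial intersection formula still delivers coefficients $\le 1$ after this quotient, since quotienting a lattice can a priori increase intersection multiplicities. This is where the fact that $W$ is a weighted projective space — so $\Sigma_W$ has a very rigid structure, with $n+1$ rays positively spanning $N_\RR$ — is essential, and I would exploit it to reduce to choosing $C_0$ inside the torus of a suitable coordinate subspace.
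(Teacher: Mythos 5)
Your overall shape (take $C$ to be the closure of a translated one-parameter subgroup inside the orbit closure containing $P$, and note that the degree is unchanged by the translation) matches the paper, but the step that actually produces the bound $D\cdot C\leq 1$ is missing, and the genericity heuristic you propose in its place is wrong. In a weighted projective space $\PP(a_0,\dots,a_n)$ the class group has rank one and $D_i\sim \frac{a_i}{a_0}D_0$, so the degrees of $C$ against the various $D_i$ are forced to be proportional to the weights $a_i$; "choosing $\lambda$ generic enough that $C$ crosses each invariant divisor transversally at most once" is not available, and in fact a generic $\lambda$ makes the degree larger, not smaller. The paper's key idea is the opposite of genericity: one takes $\lambda$ to be the ray generator $v_0$ whose weight $a_0=\max_i a_i$ is maximal. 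Then, using the piecewise-linear function $\phi$ with $\phi(v_0)=1$, $\phi(v_i)=0$ for $i\neq 0$, and the fact that $-v_0=\sum_{i>0}\frac{a_i}{a_0}v_i$ lies in the opposite maximal cone, one gets $D_0\cdot C=1$ exactly, and then $D_i\cdot C=\frac{a_i}{a_0}D_0\cdot C\leq 1$ from the linear equivalence. A concrete test case showing your choice fails: in $\PP(1,1,2)$, taking $\lambda$ equal to a weight-one ray gives $D_0\cdot C=1$ but $D_2\cdot C=2$.

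Your treatment of the boundary case also contains errors that cannot be patched as stated. You claim that invariant divisors either restrict to divisors on $Z$ or "meet $C$ trivially"; in a weighted projective space every torus-invariant divisor is an ample $\QQ$-Cartier class, so $D_j\cdot C>0$ for every curve, and the divisors $D_j$ containing the stratum (those with $v_j\in\sigma$) contain $C$ itself, so their intersection with $C$ must be computed by moving $D_j$ in its linear equivalence class rather than by restriction. The paper handles the boundary case by induction on dimension: each $D_j$ is again a weighted projective space, the inductive curve in $D_j$ satisfies $D'_i\cdot C\leq 1$ for the invariant divisors of $D_j$, the comparison $D_i\cdot C=\frac{1}{m_{ij}}D'_i\cdot C$ with the cone multiplicity $m_{ij}\geq 1$ (so the quotient helps rather than hurts, contrary to your worry) gives the bound for $i\neq j$, and the remaining divisor $D_j$ is again handled by the linear equivalence $D_j\cdot C=\frac{a_j}{a_0}D_0\cdot C$. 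Without the maximal-weight choice and these two computations, the proposal does not establish the lemma.
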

\begin{proof}
Let $v_0,\dots,v_n\in N$ be the primitive generators for the rays of $\Sigma_W$, and let $a_0,\dots,a_n$ be relatively prime positive integers with $\sum a_iv_i=0$ in $N$. Without loss of generality $a_0=\max(a_i)$. Since $W$ is a weighted projective space, $N$ is the lattice spanned by the $v_i$. Let $D_i$ be the torus-invariant divisor corresponding to $v_i$. We prove the result by inducting on dimension.

We first handle the base case where $\dim W=1$, i.e.~$W=\PP^1$. Then choosing $C=W$, we find $C\cdot D_i=\deg(D_i)=1$.

Next, we handle the case where $P\in T$ or where $P$ is in the torus $T_{D_0}$ of $D_0$. Let $C$ be the closure of the $1$-parameter subgroup corresponding to the lattice point $v_0\in N$. Let $\phi$ denote the unique function $\phi\colon N_\RR\to\RR$ which is linear on all maximal cones subject to the condition $\phi(v_0) = 1$ and $\phi(v_i)=0$ for $i\neq0$. Then
\[
D_0\cdot C=\phi(v_0)+\phi(-v_0).
\]
Since $-v_0=\sum_{i>0}\frac{a_i}{a_0}v_i$ is in the maximal cone generated by $v_1,\dots,v_n$, we see $\phi(-v_0)=0$ and so $D_0\cdot C=1$. Furthermore, since $\frac{1}{a_0}D_0$ and $\frac{1}{a_i}D_i$ are linearly equivalent for all $i$, and $a_i\leq a_0$, we find
\[
D_i\cdot C=\frac{a_i}{a_0}D_0\cdot C\leq1.
\]
Note that $C$ contains both the identity of $T$ and the identity of $T_{D_0}$. Thus, if $P\in T$ or $P\in T_{D_0}$, a suitable $T$-translate of $C$ contains $P$.

It remains to handle the case where $P\in D_j$ for some $j\neq0$. Now, $D_j$ is a weighted projective space of dimension $\dim W-1$; its lattice is given by $\overline{N}:=N/\ZZ v_j$ and its torus-invariant divisors $D'_i$ correspond to the ray spanned by $v_i$ in $\overline{N}$ for $i\neq j$. By induction, there exists a curve $C\subseteq D_j$ which is the closure of a $1$-parameter subgroup in a $T_{D_j}$-orbit closure; in particular, $C$ is also the closure of a $1$-parameter subgroup in a $T$-orbit closure. By construction $D'_i\cdot C\leq1$ for all $i\neq j$. Letting $m_{ij}\geq1$ denote the multiplicity of the cone $\<v_i,v_j\>$ in $N$, we have from \cite[p.~100]{fulton-tv} that
\[
D_i\cdot C=\frac{1}{m_{ij}}D'_i\cdot C\leq1
\]
for $i\neq j$. To handle the case of $D_j$, we apply the same technique as above:
\[
D_j\cdot C=\frac{a_j}{a_0}D_0\cdot C\leq D'_0\cdot C\leq1.
\]
This completes the proof of the result.
\end{proof}

Applying Lemmas \ref{l:reduce-wps-Pn-mod-mup} and \ref{l:all-D-C-leq1}, we are able to handle many cases of Proposition \ref{prop:1ps-on-fake-wted-proj-sp}.

\begin{corollary}
\label{cor:1ps-on-fake-wted-proj-sp-not-covered-by-all1s}
The following are true:
\begin{enumerate}[(1)]
\item\label{cor:1ps-on-fake-wted-proj-sp-not-covered-by-all1s::wps-abc} Proposition \ref{prop:1ps-on-fake-wted-proj-sp} holds for weighted projective spaces.

\item\label{cor:1ps-on-fake-wted-proj-sp-not-covered-by-all1s::fwps-ab} Proposition \ref{prop:1ps-on-fake-wted-proj-sp} (\ref{prop:1ps-on-fake-wted-proj-sp::curve}) and (\ref{prop:1ps-on-fake-wted-proj-sp::n1}) hold for all fake weighted projective spaces.
\end{enumerate}
\end{corollary}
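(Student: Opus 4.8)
The plan is as follows. Statement (2) is immediate from statement (1): the first assertion of Lemma~\ref{l:reduce-wps-Pn-mod-mup} reduces parts (\ref{prop:1ps-on-fake-wted-proj-sp::curve}) and (\ref{prop:1ps-on-fake-wted-proj-sp::n1}) of Proposition~\ref{prop:1ps-on-fake-wted-proj-sp} for fake weighted projective spaces to the same statements for weighted projective spaces, which statement (1) provides. So I concentrate on statement (1). Fix a weighted projective space $W$; passing to its well-formed model changes none of the relevant data, so I write $W=\PP(a_0,\dots,a_n)$ with $a_0\geq\cdots\geq a_n$ and the $\gcd$ of every $n$ of the $a_i$ equal to $1$, with torus-invariant prime divisors $D_0,\dots,D_n$. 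Lemma~\ref{l:all-D-C-leq1} hands me a curve $C$ through $P$ that is the closure of a $1$-parameter subgroup in a $T$-orbit closure (this is property (\ref{prop:1ps-on-fake-wted-proj-sp::curve})) with $D\cdot C\leq 1$ for every torus-invariant $D$; since $-K_W=\sum_{i=0}^n D_i$, summing yields $-K_W\cdot C\leq n+1=1+\dim W$, which is property (\ref{prop:1ps-on-fake-wted-proj-sp::n1}).

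It remains to establish property (\ref{prop:1ps-on-fake-wted-proj-sp::n}): assuming $W$ terminal and $W\not\simeq\PP^n$, equivalently $a_0\geq 2$, I must improve $C$ so that $-K_W\cdot C\leq\dim W$. The conceptual core is the case where $P$ lies in the open torus $T$ or in the torus $T_{D_0}$ of a maximal-weight divisor. There I use the specific curve from the proof of Lemma~\ref{l:all-D-C-leq1} — the closure of the $1$-parameter subgroup of the ray $v_0$ of $D_0$ — for which $D_0\cdot C=1$ and $D_i\cdot C=a_i/a_0$, so $-K_W\cdot C=\tfrac1{a_0}\sum_{i=0}^n a_i$, and it suffices to prove $\sum_i a_i<n\,a_0$. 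I would extract this from the Reid--Tai criterion at the torus-fixed point $[1:0:\cdots:0]$, where $W$ has the cyclic quotient singularity $\tfrac1{a_0}(a_1,\dots,a_n)$: well-formedness rules out pseudo-reflections, so, taking the element $k=a_0-1$ and using $0<a_j\leq a_0$, terminality forces $\sum_{j:\,a_j<a_0}(a_0-a_j)/a_0>1$; substituting into the identity $\sum_{i=0}^n a_i=(n+1)a_0-\sum_{j:\,a_j<a_0}(a_0-a_j)$ gives $\sum_i a_i<n\,a_0$, hence $-K_W\cdot C<n=\dim W$.

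For $P$ in a deeper boundary stratum I would induct on $\dim W$, following the recursion in the proof of Lemma~\ref{l:all-D-C-leq1}: there $C$ is produced inside a coordinate divisor $D_j$ — again a weighted projective space, of dimension $n-1$, through $P$ — still with $D_i\cdot C\leq 1$ for every torus-invariant $D_i$ of $W$. Writing $-K_W=D_j+\sum_{i\neq j}D_i$, and using that the $D_i|_{D_j}$ for $i\neq j$ are the torus-invariant prime divisors of $D_j$ up to multiplicities $\geq 1$ (so that $\sum_{i\neq j}D_i\cdot C\leq -K_{D_j}\cdot C$, as $C$ lies in none of them) together with $D_j\cdot C\leq 1$, I get $-K_W\cdot C\leq -K_{D_j}\cdot C+1$. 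Iterating this down the recursion, until $P$ lands in the torus, or the maximal-weight-divisor torus, of some coordinate subspace $Z$ where the argument of the previous paragraph applies, should bound $-K_W\cdot C$ by $\dim W$.

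The step I expect to be the main obstacle is precisely this last induction. The intermediate coordinate subspaces $D_j,\ D_j\cap D_{j'},\dots$ arising in the recursion need not be terminal, nor distinct from a projective space, so I cannot simply cite part (\ref{prop:1ps-on-fake-wted-proj-sp::n}) as an inductive hypothesis; the delicate point is to show directly that the curve produced by Lemma~\ref{l:all-D-C-leq1} inside each such subspace already has anticanonical degree at most the dimension of that subspace. Carrying this through will require tracking the weight data along the descent and invoking the terminality of the \emph{original} $W$, rather than of the subspace, at the appropriate torus-fixed point. The torus-point case, by contrast, is the short and clean part: once the rearrangement identity above is in hand, it is essentially a one-line application of Reid--Tai.
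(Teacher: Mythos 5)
Your treatment of statement (2), of parts (\ref{prop:1ps-on-fake-wted-proj-sp::curve}) and (\ref{prop:1ps-on-fake-wted-proj-sp::n1}), and of part (\ref{prop:1ps-on-fake-wted-proj-sp::n}) in the case $P\in T$ or $P\in T_{D_0}$ is correct; in that last case your Reid--Tai computation at the coordinate point $[1:0:\cdots:0]$ is a legitimate substitute for the paper's appeal to \cite[Proposition 2.3]{classifying-terminal-wps}, and it proves exactly the numerical inequality $h:=\sum_i a_i< n\,a_0$ that is needed. The genuine gap is the boundary case of part (\ref{prop:1ps-on-fake-wted-proj-sp::n}), which you flag as ``the main obstacle'' and do not close: the induction you sketch via $-K_W\cdot C\leq -K_{D_j}\cdot C+1$ cannot be completed as stated, precisely because (as you note) the strata $D_j$, $D_j\cap D_{j'},\dots$ need not be terminal and may be projective spaces, so no inductive hypothesis is available and the $+1$ losses accumulate.

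The fix is that no case distinction on the position of $P$, and no induction beyond the one already inside Lemma~\ref{l:all-D-C-leq1}, is needed. That lemma gives, for \emph{every} $P\in W(k)$ (boundary points included), a curve $C$ satisfying property (\ref{prop:1ps-on-fake-wted-proj-sp::curve}) with $D_i\cdot C\leq 1$ for \emph{all} $i$ --- in particular $D_0\cdot C\leq 1$. Since $\frac{1}{a_i}D_i\sim\frac{1}{a_0}D_0$ as $\QQ$-divisor classes, one has for \emph{any} curve the identity
\[
-K_W\cdot C=\sum_{i=0}^n D_i\cdot C=\frac{1}{a_0}\Bigl(\sum_{i=0}^n a_i\Bigr)\,D_0\cdot C\leq\frac{h}{a_0},
\]
and your own numerical inequality $h<n\,a_0$ (valid whenever $W$ is terminal and $W\not\simeq\PP^n$, so $a_0\geq 2$) then gives $-K_W\cdot C<n=\dim W$ uniformly. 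In other words, the intersection-theoretic input is entirely contained in the single bound $D_0\cdot C\leq 1$ from Lemma~\ref{l:all-D-C-leq1}, and the terminality input is the purely combinatorial statement about the weights; you already have both pieces but only assemble them when $P$ is in the torus. This is exactly how the paper argues, except that it derives $h/a_0\leq n$ from Kasprzyk's criterion $\sum_{i}\{\kappa a_i/h\}\leq n-1$ with $\kappa=n$ rather than from Reid--Tai at a coordinate point.
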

\begin{proof}
By Lemma \ref{l:reduce-wps-Pn-mod-mup}, statement (\ref{cor:1ps-on-fake-wted-proj-sp-not-covered-by-all1s::fwps-ab}) follows from statement (\ref{cor:1ps-on-fake-wted-proj-sp-not-covered-by-all1s::wps-abc}). 

Let $W$ be a weighted projective space. We let $n=\dim W$ and again denote by $v_0,\dots,v_n\in N$ the primitive generators for the rays of $\Sigma_W$. Let $a_0,\dots,a_n$ be relatively prime positive integers with $\sum a_iv_i=0$ in $N$. Without loss of generality $a_0=\max(a_i)$. 

By Lemma \ref{l:all-D-C-leq1}, there is a curve $C\subseteq W$ through $P$ satisfying property (\ref{prop:1ps-on-fake-wted-proj-sp::curve}) of Proposition \ref{prop:1ps-on-fake-wted-proj-sp}, and such that $D_i\cdot C\leq1$ for all $i$. Since $\frac{1}{a_0}D_0$ and $\frac{1}{a_i}D_i$ are linearly equivalent for all $i$, and $D_0\cdot C\leq1$, we see
\[
-K_{W}\cdot C=\frac{1}{a_0}(\sum_{i=0}^n a_i)D_0\cdot C\leq\frac{1}{a_0}\sum_{i=0}^n a_i.
\]
As $a_0=\max(a_i)$, we see $\frac{1}{a_0}\sum_{i=0}^n a_i\leq n+1$, thereby proving Proposition \ref{prop:1ps-on-fake-wted-proj-sp} (\ref{prop:1ps-on-fake-wted-proj-sp::n1}) for weighted projective spaces.

It remains to prove that if $W$ has terminal singularities and is not isomorphic to $\PP^n$, then $\frac{1}{a_0}\sum_{i=0}^n a_i\leq n$. For ease, of notation, let $h=\sum_{i=0}^n a_i$. By \cite[Proposition 2.3]{classifying-terminal-wps}, we see
\begin{equation}
\label{eqn:terminal-wps-wts}
\sum_{i=0}^n\left\{\frac{a_i\kappa}{h}\right\}\leq n-1
\end{equation}
for all $2\leq\kappa\leq h-2$, where $\{x\}=x-\lfloor x\rfloor$. Since $W\not\simeq\PP^n$, we know each $a_i\geq1$ and $a_0\geq2$; in particular, we can choose $\kappa=n$.

Now, if $\frac{1}{a_0}\sum_{i=0}^n a_i>n$, then $\frac{na_0}{h}<1$, and so $\lfloor\frac{na_i}{h}\rfloor=0$ for all $i$. As a result, 
\[
\sum_{i=0}^n\left\{\frac{na_i}{h}\right\}=\sum_{i=0}^n \frac{na_i}{h}=n,
\]
contradicting (\ref{eqn:terminal-wps-wts}).
\end{proof}

In light of Lemma \ref{l:reduce-wps-Pn-mod-mup} and Corollary \ref{cor:1ps-on-fake-wted-proj-sp-not-covered-by-all1s}, to finish the proof of Proposition \ref{prop:1ps-on-fake-wted-proj-sp}, it remains to handle the case where $W$ has terminal singularities and is of the form given in Lemma \ref{l:reduce-wps-Pn-mod-mup} (\ref{l:reduce-wps-Pn-mod-mup::pnmup}). We first handle the case where $P \in T$ through Lemma \ref{l:1ps-on-fake-wted-proj-sp-all1-wts-P-in-T} and Corollary \ref{cor:1ps-on-fake-wted-proj-sp-all1-wts-P-in-T}.

\begin{lemma}
\label{l:1ps-on-fake-wted-proj-sp-all1-wts-P-in-T}
Let $W=\PP^n/\mu_r$ be a fake weighted projective space where the quotient map $\PP^n\to W$ is the universal covering in codimension 1. Then there is a standard affine patch $x_j\neq 0$ of $\PP^n$ on which the action of $\zeta\in\mu_r$ is given by \[\left[\zeta^{w_0}x_0:\ldots:\zeta^{w_{j-1}}x_{j-1}:x_j:\zeta^{w_{j+1}}x_{j+1}:\ldots:\zeta^{w_n}x_n\right]\] 
such that $w_i\leq\frac{rn}{n+1}$ for all $i$.
\end{lemma}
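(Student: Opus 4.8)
The plan is to work directly with the lattice combinatorics underlying the universal covering $\iota\colon N'\hookrightarrow N$. Writing $W = \PP^n/\mu_r$ with $\mu_r\cong N/N'$, the group $\mu_r$ acts on the cover $\PP^n$ by a character vector $(w_0,\dots,w_n)\in(\ZZ/r)^{n+1}$, determined as follows: if $e_0,\dots,e_n$ are the primitive ray generators of $\Sigma_{\PP^n}$ in $N'$ (so $\sum e_i = 0$), then a generator $g\in N/N'$ can be represented by an element of $N$ of the form $g = \tfrac1r\sum_i w_i e_i$ with $\sum_i w_i\equiv 0\pmod r$, and the weight of the $\mu_r$-action on the $j$-th affine chart $\{x_j\neq 0\}$ is obtained by subtracting $w_j$ from every coordinate, i.e.\ the action there has weights $(w_i - w_j)_{i\neq j}$. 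So the statement to be proved is: after replacing the representative $g$ by a suitable one in its coset and reindexing, we may arrange $0\le w_i\le \tfrac{rn}{n+1}$ for all $i$, where the chart is chosen by the index $j$ realizing the normalization $w_j = \min_i w_i$ (which we then subtract off).

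First I would fix a representative: choose integers $w_0,\dots,w_n$ with $0\le w_i<r$ and $\sum_i w_i\equiv 0\pmod r$, so that $\sum_i w_i = tr$ for some integer $0\le t\le n$. Translating by the relation $\sum_i e_i = 0$ (which costs nothing in $N/N'$) lets us instead add any common integer constant to all the $w_i$ simultaneously modulo nothing — more precisely, the freedom is that the character vector is only well-defined up to (i) adding a global constant $c\cdot(1,1,\dots,1)$ reduced mod $r$ (coming from the ambiguity $g\mapsto g + (\text{something in }N')$ via $\tfrac cr\sum e_i\in N'$ only when $r\mid c(n+1)$, so this needs care) and (ii) scaling by a unit mod $r$ (choice of generator of $\mu_r$). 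The cleanest approach is: among all representatives in the coset, pick the one minimizing $\sum_i w_i$ subject to $w_i\ge 0$; this forces $t$ to be as small as possible. The key claim is then that for this choice, $\max_i w_i \le \tfrac{rn}{n+1}$.

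The main obstacle — and the heart of the lemma — is proving that bound on $\max_i w_i$. I expect this to come from the terminality hypothesis: $W$ has terminal singularities, and by the toric terminality criterion (Reid–Tai / the description via $\{\tfrac{w_i\kappa}{r}\}$, exactly as invoked in the proof of Corollary \ref{cor:1ps-on-fake-wted-proj-sp-not-covered-by-all1s} via \cite{classifying-terminal-wps}, here applied to each affine cone of $\Sigma_W$), one gets inequalities of the form $\sum_{i\ne j}\{\tfrac{(w_i-w_j)\kappa}{r}\} > 1$ for all $j$ and all $0<\kappa<r$ with $\gcd(\kappa,r)=1$ — equivalently, terminality rules out "shallow" lattice points in the simplicial cones. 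Taking $\kappa$ appropriately (likely $\kappa=1$ together with a counting argument summing over the $n+1$ charts, or summing the relation $\sum_i w_i = tr$ against the terminality inequalities) should yield $t\le n-1$ hence $\sum_i w_i\le (n-1)r$, and then a separate argument — using that $w_j=\min_i w_i$ so $w_j\le \tfrac{tr}{n+1}$, and that subtracting $w_j$ keeps everything in range — pins $\max_i w_i\le \tfrac{rn}{n+1}$. I would structure the write-up as: (1) set up the character vector and its ambiguities; (2) choose the minimal representative and the chart $j$; (3) invoke terminality to bound $\sum w_i$; (4) deduce the per-coordinate bound. Step (3) is where the real work lies, and I would expect to lean on the same reference \cite{classifying-terminal-wps} already used above, transported through the cover $\PP^n\to W$.
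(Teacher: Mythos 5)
Your setup is on the right track---the weights on the chart $x_j\neq 0$ are indeed the residues of $w_i-w_j$ modulo $r$, and one is free to reindex and to change the representative of the character---but the two steps you identify as the heart of the argument are both off target, and together they constitute a genuine gap. First, the lemma carries no terminality hypothesis, and none is needed: Corollaries \ref{cor:1ps-on-fake-wted-proj-sp-all1-wts-P-in-T} and \ref{cor:1ps-on-fake-wted-proj-sp-all1-wts-P-boundary} explicitly record that the resulting degree bound holds ``even if $W$ does not have terminal singularities,'' so a proof routed through the Reid--Tai-type inequality of \cite{classifying-terminal-wps} is proving the wrong statement. Second, the chart you select---the index $j$ realizing $w_j=\min_i w_i$---is the wrong one, and no bound on $\sum_i w_i$ repairs it. Take $n=2$, $r=5$ and the action $[x_0:\zeta x_1:\zeta^4 x_2]$; the pairwise weight differences $1,3,4$ are coprime to $5$, so the action is free in codimension $1$ and $\PP^2\to\PP^2/\mu_5$ is a universal covering in codimension $1$, with $\sum_i w_i=r$ (so $t=1=n-1$, satisfying your step (3)). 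The minimal weight is $w_0=0$, but on the chart $x_0\neq 0$ the weights are $1$ and $4$, and $4>\tfrac{rn}{n+1}=\tfrac{10}{3}$. The correct chart is $x_2\neq 0$, where the weights are the residues of $0-4$ and $1-4$, namely $1$ and $2$.

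The actual argument is purely combinatorial and quite short. Sort the weights as $r=:w_0>w_1\ge\cdots\ge w_n\ge w_{n+1}:=0$. The consecutive gaps $w_j-w_{j+1}$ are nonnegative and telescope to $r$, so by pigeonhole some index $j$ has $w_j-w_{j+1}\ge\tfrac{r}{n+1}$; that $j$ is the chart to use. For $i<j$ the reduced weight is $w_i-w_j\le r-w_j\le\tfrac{rn}{n+1}$ (using $w_j\ge w_j-w_{j+1}\ge\tfrac{r}{n+1}$), while for $i>j$ it is $r+w_i-w_j\le r-(w_j-w_{j+1})\le\tfrac{rn}{n+1}$. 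In other words, the quantity to optimize is the size of the gap directly below $w_j$ in the cyclically ordered list of weights, not the value of $w_j$ itself; neither terminality nor control of $\sum_i w_i$ enters the proof.
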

\begin{proof}
In what follows, we will denote by $M(k)$ the unique element of $\{0,1,\ldots,r-1\}$ that is congruent to $k$ modulo $r$.

First, note that we may reorder the coordinates of $\PP^n$ so that the action of $\mu_r$ on $\PP^n$ globally is given by
\[[x_0:\zeta^{w_1}x_1:\ldots:\zeta^{w_n}x_n]\]
where the $w_i$ are positive integers satisfying $r:=w_0>w_1\geq\ldots\geq w_n\geq w_{n+1}:=0$.  For $0\leq j\leq n$, if we identify the the $j$-th affine patch $x_j=1$ with $\AA^n$, the action of $\zeta\in\mu_r$ is given by
\[\left(\zeta^{w_0-w_j}x_0,\ldots,\zeta^{w_{j-1}-w_j}x_{j-1},\zeta^{w_{j+1}-w_j}x_{j+1},\ldots,\zeta^{w_n-w_j}x_n\right).\]

Next, notice that $\sum_{j=0}^n M(w_j-w_{j+1})=(w_0-w_1)+\dots+(w_{n-1}-w_n)+(w_n-w_{n+1})=r$. So by the Pigeonhole Principle, there is some $j$ for which 
\[w_j-w_{j+1}=M(w_j-w_{j+1})\geq\frac{r}{n+1}.\] 
In particular, $w_j\geq w_j-w_{j+1}\geq\frac{r}{n+1}$. Furthermore, $w_j>w_{j+1}$ since otherwise $r=0$, a contradiction.

On the $j$-th affine patch, the weights of the $\mu_r$-action are given by $M(w_i-w_j)$ for $i\neq j$. If $i<j$, then $M(w_i-w_j)=w_i-w_j$; since $w_i<r$ and $w_j\geq\frac{r}{n+1}$, we find $M(w_i-w_j)<\frac{rn}{n+1}$. If $i>j$, then since $w_j>w_{j+1}\geq w_i$, we find $M(w_i-w_j)=r+w_i-w_j\leq r+w_{j+1}-w_j\leq\frac{rn}{n+1}$, as desired.
\end{proof}

\begin{corollary}
\label{cor:1ps-on-fake-wted-proj-sp-all1-wts-P-in-T}
Proposition \ref{prop:1ps-on-fake-wted-proj-sp} holds for fake weighted projective spaces $W$ of the form given in Lemma \ref{l:reduce-wps-Pn-mod-mup} (\ref{l:reduce-wps-Pn-mod-mup::pnmup}) whenever $P\in T$. In fact, the stronger conclusion $-K_W\cdot C\leq\dim W$ holds even if $W$ does not have terminal singularities.
\end{corollary}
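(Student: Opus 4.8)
The plan is to take $C$ to be the closure of a carefully chosen one-parameter subgroup of the torus $T$ of $W$, translated by $T$ so as to pass through $P$ (possible since $P\in T$), and to control $-K_W\cdot C$ using the affine chart supplied by Lemma~\ref{l:1ps-on-fake-wted-proj-sp-all1-wts-P-in-T}. Write $W=\PP^n/\mu_r$, let $v_0,\dots,v_n\in N$ be the primitive generators of the rays of $\Sigma_W$, and let $N'\subseteq N$ be the sublattice they span, which has index $r$. Because the universal covering in codimension $1$ is $\PP^n$, the ray relation is $\sum_{i=0}^n v_i=0$ with all coefficients equal to $1$; hence $\{v_i:i\neq j\}$ is a $\ZZ$-basis of $N'$ for each $j$ (since $\PP^n$ is smooth), and, the $D_i$ being mutually $\QQ$-linearly equivalent, $-K_W=\sum_i D_i\sim_{\QQ}(n+1)D_j$ for every $j$. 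First I would invoke Lemma~\ref{l:1ps-on-fake-wted-proj-sp-all1-wts-P-in-T} to fix an index $j$ and coordinates so that on the chart $\AA^n\cong\{x_j\neq 0\}$ the group $\mu_r$ scales the coordinate dual to $v_i$ (for $i\neq j$) with weight $b_i$, where $0\leq b_i\leq\frac{rn}{n+1}$ and at least one $b_i\geq 1$ (otherwise $\mu_r$ would act trivially on $\PP^n$, contradicting $r\geq 2$). This chart is the cyclic quotient of $\AA^n$ by $\mu_r$, so its cocharacter lattice --- which is $N$ --- equals $N'+\ZZ\gamma$, where $\gamma:=\frac1r\sum_{i\neq j}b_i v_i$; in particular $\gamma\in N$.

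Then I would let $\gamma_0\in N$ be the primitive generator of the ray $\RR_{\geq 0}\gamma$, write $\gamma=m\gamma_0$ with $m\in\ZZ_{\geq1}$, and take $C$ to be a $T$-translate through $P$ of the closure in $W$ of the one-parameter subgroup $\lambda^{\gamma_0}$. The basic properties are immediate: $C$ is rational (a compactified $\GG_m$); it satisfies property~(\ref{prop:1ps-on-fake-wted-proj-sp::curve}) with $Z=W$, a $T$-translate of the closure of a one-parameter subgroup still being of this form, exactly as in the proof of Lemma~\ref{l:all-D-C-leq1}; and $C$ is smooth, hence unibranch, at $P$, since $P$ lies in the open orbit where $C$ is the translate of the smooth locally closed curve $\lambda^{\gamma_0}(\GG_m)$ (using that $\gamma_0$ is primitive). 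Translating by $T$ changes neither $-K_W\cdot C$ nor any of these properties.

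The only substantive step is the bound on $-K_W\cdot C=(n+1)\,D_j\cdot C$. Let $\phi_j\colon N_{\RR}\to\RR$ be the piecewise-linear function that is linear on each maximal cone of $\Sigma_W$ and satisfies $\phi_j(v_i)=\delta_{ij}$; pulling a suitable multiple of $D_j$ back to the normalization $\PP^1$ of $C$ gives a divisor supported at the two torus-limit points of $\lambda^{\gamma_0}$ with multiplicities $\phi_j(\gamma_0)$ and $\phi_j(-\gamma_0)$, whence $D_j\cdot C=\phi_j(\gamma_0)+\phi_j(-\gamma_0)$. Now $\gamma\in\sigma_j:=\mathrm{cone}(v_i:i\neq j)$ and $\phi_j$ vanishes identically on $\sigma_j$, so $\phi_j(\gamma_0)=0$. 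For the other term, I would pick $k\neq j$ with $b_k=\max_{i\neq j}b_i$ and, using $v_j=-\sum_{i\neq j}v_i$, rewrite $-\gamma=\frac{b_k}{r}v_j+\sum_{i\neq j,k}\frac{b_k-b_i}{r}v_i$; maximality of $b_k$ makes every coefficient nonnegative, so $-\gamma\in\mathrm{cone}(v_i:i\neq k)$ and $\phi_j(-\gamma)$ is the coefficient of $v_j$, namely $\frac{b_k}{r}$. Consequently
\[
-K_W\cdot C=(n+1)\,D_j\cdot C=\frac{n+1}{m}\cdot\frac{b_k}{r}\;\leq\;\frac{n+1}{mr}\cdot\frac{rn}{n+1}=\frac{n}{m}\;\leq\;n=\dim W,
\]
and this number is positive since $b_k\geq1$. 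This yields property~(\ref{prop:1ps-on-fake-wted-proj-sp::n1}) (because $n\leq n+1$) together with the stronger bound of~(\ref{prop:1ps-on-fake-wted-proj-sp::n}), with no use of terminal singularities anywhere. I expect the main obstacle to be not any single estimate but keeping the toric dictionary straight: reading off the lattice $N=N'+\ZZ\gamma$ from the chart of Lemma~\ref{l:1ps-on-fake-wted-proj-sp-all1-wts-P-in-T}, setting up $\phi_j$, and identifying the maximal cone containing $-\gamma$. Once that is done, the crucial numerical input $b_k\leq\frac{rn}{n+1}$ is exactly the conclusion of that lemma.
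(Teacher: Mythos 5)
Your proposal is correct, and it rests on the same two pillars as the paper's argument: the weight bound of Lemma~\ref{l:1ps-on-fake-wted-proj-sp-all1-wts-P-in-T} and (a $T$-translate of) the closure of the descended one-parameter subgroup in the direction $\gamma=\frac1r\sum_{i\neq j}b_iv_i$, which is the same curve the paper produces. The difference is in how the anticanonical degree is computed. The paper lifts to the covering $f\colon\PP^n\to W$, computes the degree of the closure $C'$ of the one-parameter subgroup upstairs as $(n+1)\max_i w_i$, and descends via $f^*K_W=K_{\PP^n}$ and $f_*C'=pC$; you instead read off $N=N'+\ZZ\gamma$ from the quotient chart and evaluate $D_j\cdot C=\phi_j(\gamma_0)+\phi_j(-\gamma_0)$ directly on $W$, locating $-\gamma$ in the maximal cone opposite the ray of largest weight, exactly in the style of the paper's Lemma~\ref{l:all-D-C-leq1}. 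Your computation checks out: the identity $-\gamma=\frac{b_k}{r}v_j+\sum_{i\neq j,k}\frac{b_k-b_i}{r}v_i$ is correct, the sign ambiguity in the chosen generator of $N/N'$ is harmless since $\pm\gamma$ span the same lattice and give the same curve, and the intersection formula is applied to the primitive vector $\gamma_0$, as it must be. What your route buys is that it avoids the push-forward bookkeeping upstairs ($f_*C'=pC$ and the degree of $C'$), where one would otherwise have to worry about non-primitive weight vectors and the faithfulness of the $\mu_p$-action on $C'$; your factor $m$ absorbs all of this and only improves the bound, and the argument works verbatim for $\PP^n/\mu_r$ with arbitrary $r$, not just $r=p$ prime. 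One convention to note, which is not a gap: strictly speaking the curve satisfying property~(\ref{prop:1ps-on-fake-wted-proj-sp::curve}) of Proposition~\ref{prop:1ps-on-fake-wted-proj-sp} is a $T$-translate of a one-parameter subgroup closure, but this is exactly how the paper itself treats the issue in Lemma~\ref{l:all-D-C-leq1}, and translation changes neither the intersection numbers with torus-invariant divisors nor smoothness of $C$ at $P\in T$.
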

\begin{proof}
Let $W=\PP^n/\mu_p$ be a fake weighted projective space where the quotient map $f\colon\PP^n\to W$ is the universal covering in codimension $1$. By Lemma \ref{l:1ps-on-fake-wted-proj-sp-all1-wts-P-in-T}, after permuting coordinates, we may assume that on the standard affine patch $x_0\neq1$, $\zeta\in\mu_p$ acts by $(\zeta^{w_1}x_1,\ldots,\zeta^{w_n}x_n)$ with
\[
w_n\leq\dots\leq w_1\leq\frac{np}{n+1}.
\]

Since the restriction of $f\colon\PP^n\to W$ to the torus $T=\GG_m^n\subseteq W$ is a $\mu_r$-torsor, giving a $1$-parameter subgroup $\GG_m\to T$ is equivalent to giving a diagram
\[
\xymatrix{
\GG_m\ar[r]^-{\gamma}\ar[d]_-{\beta} & \GG_m^n\\
\GG_m & 
}
\]
where $\beta$ is a $\mu_p$-torsor and $\gamma$ is a $\mu_p$-equivariant map. In particular, we can take $\beta$ and $\gamma$ to be the maps $\beta(t)=t^p$ and $\gamma(t)=(t^{w_1},\dots,t^{w_n})$. Let $C\subseteq W$ be the closure of the $1$-parameter subgroup defined by the diagram, and let $C'\subseteq\PP^n$ be the closure of the $1$-parameter subgroup defined by $\gamma$. We then have $f(C')=C$, and since $\beta$ is a degree $p$ map, we see $f_*C'=pC$. As $w_1=\max(w_i)$, we have $-K_{\PP^n}\cdot C'=(n+1)w_1$. Since $f$ is \'etale in codimension 1, $f^*K_W=K_{\PP^n}$ and so
\[
-K_W\cdot C'=\frac{1}{p}(-K_W)\cdot f_*C=\frac{1}{p}(-K_{\PP^n})\cdot C'=\frac{n+1}{p}w_1\leq n,
\]
proving our desired inequality.
\end{proof}

We now turn to the case where $P$ lives on the boundary of $W$, which is handled in Lemma \ref{l:characterization-boundary-Pn-mod-mup} and Corollary \ref{cor:1ps-on-fake-wted-proj-sp-all1-wts-P-boundary}.

\begin{lemma}
\label{l:characterization-boundary-Pn-mod-mup}
Let $p$ be a prime and $W=\PP^n/\mu_p$ a fake weighted projective space such that the quotient map $\PP^n\to W$ is the universal covering in codimension 1. If $D\subseteq W$ is a torus-invariant divisor, then either $D\simeq\PP(1,\dots,1,p,\dots,p)$, or $D\simeq\PP^{n-1}/\mu_p$ is a fake weighted projective space such that the quotient map $\PP^{n-1}\to D$ is the universal covering in codimension 1.

Furthermore, if $D$ is a weighted projective space, then there is a torus-invariant divisor $D'\neq D$ such that the cone in the fan $\Sigma_W$ corresponding to $D\cap D'$ has multiplicity strictly greater than $1$.
\end{lemma}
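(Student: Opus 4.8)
The plan is to determine $D$ from the lattice combinatorics of the covering $f\colon\PP^n\to W$. Write $N'\subseteq N$ for the lattices of $\PP^n$ and of $W$: by hypothesis $[N:N']=p$, the inclusion induces a bijection of cones between the fan of $\PP^n$ and the fan $\Sigma_W$ of $W$, and $N'$ is generated by the primitive ray generators $v_0,\dots,v_n$ of $\Sigma_W$, which satisfy $v_0+\dots+v_n=0$ and are primitive in both $N$ and $N'$. Say $D=D_i$ is the divisor corresponding to $\RR_{\geq0}v_i$. I would first record that $D_i$ is itself a fake weighted projective space: since $v_i$ is primitive in $N$, the star of $\RR_{\geq0}v_i$ lives in the rank $n-1$ lattice $\overline N:=N/\ZZ v_i$ and is a complete simplicial fan with exactly $n$ rays, spanned by the images $\overline v_l$ of the $v_l$ $(l\neq i)$; so $D_i$ is a $\QQ$-factorial projective toric variety of Picard rank $1$.

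The heart of the argument is the following dichotomy. The coordinate divisor $\{x_i=0\}\subseteq\PP^n$ is a copy of $\PP^{n-1}$, and the restriction $g_i\colon\PP^{n-1}\to D_i$ of $f$ is the toric morphism attached to the inclusion $N'/\ZZ v_i\hookrightarrow\overline N$, of index $[N:N']=p$ (using $v_i\in N'$). For $l\neq i$, write $\overline v_l=k_l\widetilde v_l$ with $\widetilde v_l$ the primitive generator of $\RR_{\geq0}\overline v_l$ in $\overline N$; since $\widetilde v_l$ and $\overline v_l$ are the primitive generators of the same ray in $\overline N$ and in the index-$p$ sublattice $N'/\ZZ v_i$, we get $k_l\mid p$, hence $k_l\in\{1,p\}$. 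If $k_l=1$ for all $l$, then $g_i$ sends primitive ray generators to primitive ray generators, i.e.\ is \'etale in codimension $1$, and the lattice $\langle\widetilde v_l:l\neq i\rangle=\langle\overline v_l:l\neq i\rangle=N'/\ZZ v_i$ generated by the primitive ray generators of $D_i$ is the lattice of $\PP^{n-1}$; so $g_i$ is the universal covering in codimension $1$ of $D_i$, and $D_i\cong\PP^{n-1}/\mu_p$ with the deck group $\overline N/(N'/\ZZ v_i)\cong\ZZ/p$ acting through the torus. This is the second alternative. Otherwise $k_{l_0}=p$ for some $l_0$; then $\langle\widetilde v_l:l\neq i\rangle$ strictly contains $N'/\ZZ v_i$, so (the index being $p$) equals $\overline N$, the primitive ray generators of $D_i$ span $\overline N$, and $D_i$ is an honest weighted projective space. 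Projecting $\sum_l v_l=0$ to $\overline N$ gives $\sum_{l\neq i}k_l\widetilde v_l=0$, so $D_i\cong\PP\bigl((k_l)_{l\neq i}\bigr)$ with each $k_l\in\{1,p\}$, which is of the asserted shape. This is the first alternative.

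For the concluding statement, suppose $D=D_i$ is a weighted projective space. A nontrivial fake weighted projective space $\PP^{n-1}/\mu_p$ is never an honest weighted projective space, since its universal covering in codimension $1$ is the strictly larger $\PP^{n-1}$; hence $D_i$ falls in the second case above and $k_{l_0}=p$ for some $l_0\neq i$. Take $D'=D_{l_0}$. The cone of $\Sigma_W$ corresponding to $D\cap D'$ is the $2$-dimensional cone $\sigma$ with primitive ray generators $v_i,v_{l_0}$. Picking a lift $\xi\in N$ of $\widetilde v_{l_0}\in\overline N$, the identity $p\widetilde v_{l_0}=\overline v_{l_0}$ gives $p\xi=v_{l_0}+c\,v_i$ for some $c\in\ZZ$, so $\xi=\tfrac1p(v_{l_0}+c\,v_i)$ lies in $N\cap\mathrm{span}(\sigma)$; and since $v_i,v_{l_0}$ are $\QQ$-linearly independent while the $v_{l_0}$-coordinate of $\xi$ equals $\tfrac1p\notin\ZZ$, we have $\xi\notin\ZZ v_i+\ZZ v_{l_0}$. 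Therefore $\mult(\sigma)=[\,N\cap\mathrm{span}(\sigma):\ZZ v_i+\ZZ v_{l_0}\,]>1$, as required.

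The only points needing care are the two standard facts invoked in the dichotomy: that a $\QQ$-factorial projective toric variety of Picard rank $1$ is an honest weighted projective space exactly when its primitive ray generators span its lattice, and that an \'etale-in-codimension-$1$ toric cover of $D_i$ by $\PP^{n-1}$ is automatically the universal covering in codimension $1$; neither is an essential obstacle, though they do require careful bookkeeping of the star fan of $\RR_{\geq0}v_i$. The geometric content of the final assertion is just that a point where $\PP^n\to D_i$ ramifies along $D_i\cap D_{l_0}$ forces the $2$-cone on $v_i,v_{l_0}$ to be singular.
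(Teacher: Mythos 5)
Your proposal is correct and follows essentially the same route as the paper: pass to $\overline N=N/\ZZ v_i$, compare the index-$p$ sublattice $N'/\ZZ v_i$ generated by the images $\overline v_l$ with the sublattice generated by their primitive multiples $\widetilde v_l$, and use primality of $p$ to get the dichotomy, with the ``furthermore'' part coming from a divisibility $k_{l_0}=p$ being detected by the multiplicity of the $2$-cone $\langle v_i,v_{l_0}\rangle$. The only cosmetic differences are that you read off the weights directly from the projected relation $\sum_{l\neq i}k_l\widetilde v_l=0$ (where, strictly, one should note the $k_l$ need not be coprime in the degenerate case $n=2$, though the conclusion is unaffected since $\PP(p,p)\cong\PP^1$), whereas the paper bounds the multiplicities of the maximal cones of $D$, and you verify the multiplicity claim by an explicit lift $\xi$ rather than quoting $b_i=\mult(\langle v_i,v_n\rangle)$.
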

\begin{proof}
Let $v_0,\dots,v_n\in N$ be the primitive generators for the rays of $\Sigma_W$ and let $N'=\ZZ v_0+\dots+\ZZ v_n$. By hypothesis, $[N:N']=p$ and $\sum_{i=0}^n v_i=0$. The fan for $D_n$ lives on the lattice $\overline{N}:=N/\ZZ v_n$; its rays are generated by the images $\overline{v}_i\in\overline{N}$ of the $v_i$ for $0\leq i<n$. Let $b_i\in\ZZ^+$ and $\overline{v}'_i\in\overline{N}$ be the primitive lattice point such that $\overline{v}_i=b_i\overline{v}'_i$. Letting $\overline{N}':=N'/\ZZ v_n$, we see the induced map $N/N'\to\overline{N}/\overline{N}'$ is an isomorphism, and hence
\[
[\overline{N}:\overline{N}']=p.
\]
Let $\overline{N}'_0:=\ZZ\overline{v}'_0+\dots+\ZZ\overline{v}'_{n-1}$, and note that the universal covering in codimension $1$ of $D_n$ is induced by the inclusion of lattices $\overline{N}'_0\subseteq\overline{N}$. So, $D_n$ is a weighted projective space if and only if $\overline{N}'_0=\overline{N}$.

From the inclusions $\overline{N}'\subseteq\overline{N}'_0\subseteq\overline{N}$ and the fact that $[\overline{N}:\overline{N}']=p$, we see $D_n$ is not a weighted projective space if and only if $\overline{N}'_0=\overline{N}'$. Since $v_0=-\sum_{i=1}^n v_i$, we see that $v_1,\dots,v_n$ is a $\ZZ$-basis for $N'$ and so $\overline{v}_1,\dots,\overline{v}_{n-1}$ is a $\ZZ$-basis for $\overline{N}'$. Now, if $\overline{N}'_0=\overline{N}'$, then $\overline{v}'_1=\sum_{i=1}^{n-1}c_i\overline{v}_i$ for some $c_i\in\ZZ$. As a result, $\overline{v}_1=\sum_{i=1}^{n-1}b_1c_i\overline{v}_i$, so $b_1=1$. Similarly, all $b_i=1$, so $\sum_{i=0}^{n-1}\overline{v}'_i=0$, i.e.~$\Sigma_{\overline{N}'_0}$ is the fan for $\PP^{n-1}$ so $\PP^{n-1}\to D_n$ is the universal covering in codimension $1$, identifying $D_n$ with $\PP^{n-1}/\mu_p$.

We may therefore assume that $D_n$ is a weighted projective space. In order to show $D_n\simeq\PP(1,\dots,1,p,\dots,p)$, it is equivalent to show that every maximal cone of $D_n$ has multiplicity dividing $p$. Given such a maximal cone $\sigma$, after reindexing we can assume $\sigma=\<\overline{v}'_1,\dots,\overline{v}'_{n-1}\>$. Since $\overline{v}_0=-\sum_{i=1}^{n-1}\overline{v}_i$, we see $\overline{N}'=\ZZ\overline{v}_1+\dots+\ZZ\overline{v}_{n-1}$. From the inclusions
\[
\overline{N}'\subseteq \ZZ\overline{v}'_1+\dots+\ZZ\overline{v}'_{n-1}\subseteq \overline{N}
\]
and the fact that $[\overline{N}:\overline{N}']=p$, we see
\[
\mult(\sigma)=[\overline{N}:\ZZ\overline{v}'_1+\dots+\ZZ\overline{v}'_{n-1}]\in\{1,p\},
\]
as desired.

Lastly, note that $b_i=\mult(\<v_i,v_n\>)$ for $0\leq i<n$. If all $b_i=1$, then $\overline{N}'=\overline{N}'_0$, which, as we have observed above, is equivalent to the statement that $D_n$ is not a weighted projective space. So, if $D_n$ is a weighted projective space, then there must exist some $i < n$ for which the cone corresponding to $D_i \cap D_n$ has multiplicity $b_i > 1$.
\end{proof}

\begin{corollary}
\label{cor:1ps-on-fake-wted-proj-sp-all1-wts-P-boundary}
Proposition \ref{prop:1ps-on-fake-wted-proj-sp} holds for fake weighted projective spaces $W$ of the form given in Lemma \ref{l:reduce-wps-Pn-mod-mup} (\ref{l:reduce-wps-Pn-mod-mup::pnmup}). In fact, the stronger conclusion $-K_W\cdot C\leq\dim W$ holds even if $W$ does not have terminal singularities.
\end{corollary}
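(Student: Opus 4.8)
The plan is to pull everything back to the smooth cover $\PP^n$ and produce a single low--degree $\mu_p$--invariant curve there. Write $W=\PP^n/\mu_p$ with $f\colon\PP^n\to W$ the universal covering in codimension $1$; since $f$ has degree $p$ with $p$ prime, the $\mu_p$--action is faithful, and after choosing coordinates $\zeta\in\mu_p$ acts by $[\zeta^{w_0}x_0:\dots:\zeta^{w_n}x_n]$ with the $w_i$ not all congruent mod $p$. By Corollary~\ref{cor:1ps-on-fake-wted-proj-sp-all1-wts-P-in-T} we may assume $P$ lies on the boundary of $W$. Fix a lift $P'\in f^{-1}(P)$ and let $S=\{i:P'_i\neq0\}$; boundarity forces $S\subsetneq\{0,\dots,n\}$. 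The key point is that $f^*K_W=K_{\PP^n}$ because $f$ is \'etale in codimension~$1$, so whenever $\widetilde C\subseteq\PP^n$ is an irreducible $\mu_p$--stable curve on which $\mu_p$ acts \emph{faithfully}, the image $C:=f(\widetilde C)$ is irreducible with $f_*[\widetilde C]=p[C]$, and the projection formula gives
\[
-K_W\cdot C=\tfrac1p(-K_{\PP^n})\cdot\widetilde C=\tfrac{n+1}{p}\,\deg_{\PP^n}\widetilde C .
\]
Hence it suffices to build, through $P'$, a $1$--parameter--subgroup orbit closure $\widetilde C\subseteq\PP^n$ that is $\mu_p$--stable with faithful $\mu_p$--action, has $f(\widetilde C)$ unibranch at $P$, and satisfies $\deg_{\PP^n}\widetilde C\le\frac{np}{n+1}$; then $C=f(\widetilde C)$ is the closure of a $1$--parameter subgroup in the torus of a suitable torus orbit closure $Z$ of $W$, and the displayed identity yields $-K_W\cdot C\le n$, so all three parts of Proposition~\ref{prop:1ps-on-fake-wted-proj-sp} hold, with the strong bound asserted.

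First I would treat the case where the active weights $\{w_i\bmod p:i\in S\}$ are not all equal. Here I take $\widetilde C$ to be the monomial curve $\overline{\{[\,c_it^{d_i}\,]_{i\in S}\}}$ (omitted coordinates $0$, $c_i=P'_i$), with integers $d_i$ chosen so that $d_i-d_j\equiv w_i-w_j\pmod p$ for all $i,j\in S$ and $\max_i d_i-\min_i d_i$ is minimal --- this is the construction underlying Lemma~\ref{l:1ps-on-fake-wted-proj-sp-all1-wts-P-in-T} and Corollary~\ref{cor:1ps-on-fake-wted-proj-sp-all1-wts-P-in-T}, now carried out only on the coordinates in $S$. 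One checks directly that $\widetilde C$ passes through $P'$ (at $t=1$) and is a $1$--parameter--subgroup orbit closure in the torus orbit of $\PP^n$ containing $P'$; that the congruence on the $d_i$ gives $\zeta\cdot[\,c_it^{d_i}\,]=[\,c_i(\zeta t)^{d_i}\,]$, so $\widetilde C$ is $\mu_p$--stable with $\mu_p$ acting by $t\mapsto\zeta t$, and that this action is faithful because the active $w_i$ do not all agree mod $p$; and that $\mu_p$ acts freely (by translations) on that torus orbit, so $f$ is \'etale at $P'$ and $f(\widetilde C)$ is unibranch at $P$, as torus--invariant curves are unibranch. Finally, writing the distinct residues $\{w_i\bmod p:i\in S\}$ on $\ZZ/p$, their consecutive gaps sum to $p$, so the largest is at least $p/|S|$; cutting the circle along that gap shows the minimal spread is at most $p-p/|S|\le p-p/(n+1)=\frac{np}{n+1}$, hence $\deg_{\PP^n}\widetilde C\le\max_i d_i-\min_i d_i\le\frac{np}{n+1}$.

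Then I would treat the remaining case, where all coordinates of $P'$ that are nonzero have a common weight mod $p$ --- equivalently, $P'$ lies on a coordinate subspace fixed pointwise by $\mu_p$, so the monomial construction degenerates to a point. After twisting by a character of $\mu_p$ I may assume those weights are $\equiv0$; since the action is nontrivial, some vanishing coordinate $x_j$ of $P'$ has $w_j\not\equiv0\pmod p$. I take $\widetilde C$ to be the line joining $P'$ to the coordinate point $e_j$. This is a $1$--parameter--subgroup orbit closure with $P'$ as one end, has $\deg_{\PP^n}\widetilde C=1$, and (as $w_i\equiv0$ for $i\in S$) is $\mu_p$--stable with $\mu_p$ acting by $t\mapsto\zeta^{w_j}t$, which is faithful since $\gcd(w_j,p)=1$. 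Moreover $\widetilde C\cong\PP^1$ is smooth and $f(\widetilde C)=\widetilde C/\mu_p$ is the quotient of a smooth curve by a cyclic group with $P'$ an isolated fixed point, hence smooth --- in particular unibranch --- at $P$. So $-K_W\cdot C=\frac{n+1}{p}\le n\le1+\dim W$, with no hypothesis on the singularities of $W$, and the corollary follows.

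The step I expect to be the main obstacle is verifying that the curve $\widetilde C$ really is $\mu_p$--\emph{stable with faithful action}: this is exactly what forces $f|_{\widetilde C}$ to have degree $p$ and so produces the decisive factor $\tfrac1p$ in the degree computation. Alongside it comes the bookkeeping of checking that $f(\widetilde C)$ is a genuine $1$--parameter--subgroup orbit closure of $W$ through $P$ (and identifying the orbit closure $Z$), and that it remains unibranch at $P$ even when, as in the second case, $P$ is a ramification point of $f$. One could instead argue inductively on $\dim W$, passing to a torus--invariant divisor through $P$ and invoking Lemma~\ref{l:characterization-boundary-Pn-mod-mup}, but controlling the normal--bundle term $D\cdot C$ under adjunction makes that route more delicate, so the lifting argument above seems preferable.
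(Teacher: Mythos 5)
Your proof is correct, but it takes a genuinely different route from the paper. The paper argues by induction on $\dim W$: it passes to a torus-invariant divisor $D_0\ni P$ and uses the dichotomy of Lemma \ref{l:characterization-boundary-Pn-mod-mup} — either $D_0$ is a weighted projective space, in which case some cone $\<v_0,v_i\>$ has multiplicity $m\geq2$ and the curve of Lemma \ref{l:all-D-C-leq1} gives $-K_W\cdot C=\frac{n+1}{m}D_i'\cdot C\leq n$, or $D_0\simeq\PP^{n-1}/\mu_p$ again, in which case induction plus a pigeonhole on $\sum_i D_i'\cdot C=-K_{D_0}\cdot C\leq n-1$ gives the bound. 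You instead bypass Lemma \ref{l:characterization-boundary-Pn-mod-mup} entirely and work on the cover $\PP^n$, extending the weight-gap argument of Lemma \ref{l:1ps-on-fake-wted-proj-sp-all1-wts-P-in-T} and Corollary \ref{cor:1ps-on-fake-wted-proj-sp-all1-wts-P-in-T} from torus points to boundary points by restricting to the support $S$ of a lift $P'$; the cyclic-gap estimate $\max d_i-\min d_i\leq p-p/|S|\leq np/(n+1)$ together with $f_*\widetilde C=p\,C$ and $f^*K_W=K_{\PP^n}$ then gives $-K_W\cdot C\leq n$ directly, and your degenerate case (all active weights equal) is handled by a line to a coordinate point, where $\deg\widetilde C=1$ makes the bound trivial. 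I checked the points you flagged as delicate and they hold: the congruences $d_i-d_j\equiv w_i-w_j\pmod p$ make $\widetilde C$ stable with $\mu_p$ acting through $t\mapsto\zeta t$, faithfully precisely because the active weights are not all congruent (respectively because $w_j\not\equiv0$ in the second case), which is what yields $\deg(f|_{\widetilde C})=p$; and unibranchness follows either from freeness of the action on the orbit of $P'$ or, in the second case, from $f^{-1}(P)\cap\widetilde C=\{P'\}$ and smoothness of $\PP^1/\mu_p$. Two cosmetic remarks: your curves are translates of one-parameter-subgroup closures rather than subgroups through the identity, but this matches the paper's own usage in Lemma \ref{l:all-D-C-leq1}; and the aside ``torus-invariant curves are unibranch'' is not quite the right justification ($\widetilde C$ is a translated subtorus closure, not torus-invariant) — the \'etale/free-action argument you give alongside it is what actually does the work. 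Your approach has the merit of producing a completely explicit curve and avoiding the structural analysis of the boundary divisors; the paper's induction is shorter given that Lemmas \ref{l:all-D-C-leq1} and \ref{l:characterization-boundary-Pn-mod-mup} are already in place.
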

\begin{proof}
We prove the statement by induction on $\dim W$. Let $W=\PP^n/\mu_p$ as in Lemma \ref{l:reduce-wps-Pn-mod-mup} (\ref{l:reduce-wps-Pn-mod-mup::pnmup}), and let $P\in W(k)$. Let $v_0,\dots,v_n\in N$ be the primitive generators for the rays of $\Sigma_W$, and denote by $D_i$ the torus-invariant divisor corresponding to $v_i$. If $P\in T$, then the statement follows from Corollary \ref{cor:1ps-on-fake-wted-proj-sp-all1-wts-P-in-T}. So, we may assume without loss of generality that $P\in D_0$. For $1\leq i\leq n$, let $D'_i$ denote the torus-invariant divisor on $D_0$ corresponding to $v_i$.

First suppose that $D_0$ is a weighted projective space. Then by Lemma \ref{l:characterization-boundary-Pn-mod-mup}, there exists $i\neq0$ such that the multiplicity of the cone $\<v_0,v_i\>$ is $m\geq2$. Since $D_0$ is a weighted projective space, Lemma \ref{l:all-D-C-leq1} yields a curve $C\subseteq D_0$ satisfying Proposition \ref{prop:1ps-on-fake-wted-proj-sp} (\ref{prop:1ps-on-fake-wted-proj-sp::curve}) and $C.D'_j \leq 1$ for all $j$. Then
\[
-K_W\cdot C=(n+1)D_i\cdot C=\frac{n+1}{m}D'_i\cdot C\leq\frac{n+1}{m}\leq n.
\]
Note, in particular, that this handles the base case of our induction. Indeed, there are no $1$-dimensional fake weighted projective spaces of the form given in Lemma \ref{l:reduce-wps-Pn-mod-mup} (\ref{l:reduce-wps-Pn-mod-mup::pnmup}), so the base case is $n=2$, in which case we necessarily have $D_0\simeq\PP^1$.

If $D_0$ is not a weighted projective space, then by Lemma \ref{l:characterization-boundary-Pn-mod-mup}, we know $D_0\simeq\PP^{n-1}/\mu_p$ as in Lemma \ref{l:reduce-wps-Pn-mod-mup} (\ref{l:reduce-wps-Pn-mod-mup::pnmup}). By induction on dimension, we can assume the existence of our desired $C$ with $-K_{D_0}\cdot C\leq n-1$. Since $\sum_{i=1}^n D'_i\cdot C=-K_{D_0}\cdot C$, by the Pigeonhole Principle, we may without loss of generality that $D'_1\cdot C\leq\frac{n-1}{n}$. So, for all $0\leq i\leq n$, we have $D_i\cdot C=D_1\cdot C\leq D'_1\cdot C\leq\frac{n-1}{n}$, which implies $-K_W\cdot C\leq\frac{1}{n}(n-1)(n+1)\leq n$.
\end{proof}

Putting these results to together we have:

\begin{proof}[{Proof of Proposition \ref{prop:1ps-on-fake-wted-proj-sp}}]
Corollary \ref{cor:1ps-on-fake-wted-proj-sp-not-covered-by-all1s} shows that Proposition \ref{prop:1ps-on-fake-wted-proj-sp} (\ref{prop:1ps-on-fake-wted-proj-sp::curve}) and (\ref{prop:1ps-on-fake-wted-proj-sp::n1}) hold for all fake weighted projective spaces and that part (\ref{prop:1ps-on-fake-wted-proj-sp::n}) additionally holds for all weighted projective spaces. By Lemma \ref{l:reduce-wps-Pn-mod-mup}, it remains to show Proposition \ref{prop:1ps-on-fake-wted-proj-sp} holds for fake weighted projective spaces of the form given in Lemma \ref{l:reduce-wps-Pn-mod-mup} (\ref{l:reduce-wps-Pn-mod-mup::pnmup}). This is handled in Corollary \ref{cor:1ps-on-fake-wted-proj-sp-all1-wts-P-boundary}.
\end{proof}

\section{Base case: $P$ is in the exceptional locus}
\label{sec:base-case-PinExc}

In this section we prove Proposition \ref{prop:main-thm-base-case}, thereby finishing the proof of Theorem \ref{thm:conj-plus-K-degree}, and hence also proving Theorem \ref{thm:conj-higher-dim}. We begin with a lemma that allows us to reduce to the case of fake weighted projective spaces.

\begin{lemma}
	\label{l:reduction-to-fibers}
	Let $X$ be a projective terminal $\QQ$-factorial split toric variety and let $\pi\colon X\to Y$ be an elementary contraction corresponding to the extremal ray $\mathcal{R}$. Suppose $C\subseteq X$ is a curve contracted by $\pi$. If $F$ is the reduction of the fiber of $\pi$ containing $C$, then $-K_X\cdot C\leq -K_F\cdot C$.
\end{lemma}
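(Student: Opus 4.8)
The plan is to obtain the inequality from adjunction applied to $C\subseteq F\subseteq X$. Granting a suitable adjunction formula $K_F=(K_X)|_F+c_1(N_{F/X})$ for the normal sheaf $N_{F/X}$ of $F$ in $X$ — which needs justification since $X$ is only $\QQ$-Gorenstein (terminal), not smooth — intersecting both sides with the curve $C\subseteq F$ gives
\[
-K_F\cdot C=-K_X\cdot C-\deg\big(N_{F/X}|_C\big).
\]
Hence $-K_X\cdot C\le -K_F\cdot C$ is equivalent to $\deg(N_{F/X}|_C)\le 0$, i.e.\ to the conormal sheaf $N^\vee_{F/X}$ having non-negative degree on $C$.

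The point is that $F$ is a fiber, so it does not ``move outward''. After replacing $Y$ by an affine neighbourhood of the point $\pi(F)$ over which $Y$ is smooth, the scheme-theoretic fiber agrees with $F$ along a dense open $U\subseteq F$, and there the conormal sheaf $N^\vee_{F/X}|_U$ is a quotient of $\pi^*\Omega^1_Y|_U$, which is a trivial bundle. Thus $N^\vee_{F/X}$ is globally generated on $U$, so $\deg(N^\vee_{F/X}|_C)\ge 0$ for every curve $C\subseteq F$ meeting $U$. To cover the remaining curves I would use that $F$, being the reduction of a fiber of an elementary toric contraction, has Picard number one (its reduction is a fake weighted projective space): every curve class on $F$ is a positive multiple of a fixed one, so once the inequality holds for one curve — a general one, which meets $U$ — it holds for all of them. (When $F$ is a general fiber of $\pi$ one has $N_{F/X}$ trivial outright and the inequality becomes an equality.)

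The main obstacle I expect is the foundational bookkeeping in the toric, singular setting. First, establishing the adjunction identity $K_F=(K_X)|_F+c_1(N_{F/X})$ — or rather the inequality $K_X|_F-K_F\le 0$ against $C$ — for $F$ and $X$ only normal and $\QQ$-Gorenstein, which I would do combinatorially by realizing $F$ (or its relevant component) as a torus-orbit closure $V(\sigma_0)$ in the fan $\Sigma$ of $X$ and using $-K_X|_{V(\sigma_0)}=-K_{V(\sigma_0)}+\sum_{\rho\in\sigma_0(1)}D_\rho|_{V(\sigma_0)}$ together with the intersection-theory formulas of \cite[Chapters~6 and~15]{CLS}. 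Second, checking the case division (Mori fiber space, divisorial contraction, flipping contraction) so that ``the fiber of $\pi$ containing $C$'' really is of this orbit-closure form and really has Picard number one, and so that the resulting correction term $\sum_{\rho\in\sigma_0(1)}D_\rho\cdot C$ is non-positive in each case.
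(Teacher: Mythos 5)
Your plan rests on the adjunction identity $K_F=K_X|_F+c_1(N_{F/X})$, and this is exactly where it breaks: in the setting of the lemma neither that identity nor your proposed toric substitute is available, and the corrections it omits are the actual content of the statement. $F$ is in general a singular, non-lci subvariety of a singular $X$, and the combinatorial formula you fall back on, $-K_X|_{V(\sigma_0)}=-K_{V(\sigma_0)}+\sum_{\rho\in\sigma_0(1)}D_\rho|_{V(\sigma_0)}$, is the smooth-case formula and is simply false here: already for the divisor $V=D_0$ through the singular point of $\PP(1,1,2)$ one has $\deg\bigl((K_X+V)|_V\bigr)=-\tfrac32$ while $\deg K_V=-2$, so a nontrivial ``different''-type correction appears, and in higher codimension the restriction formulas acquire multiplicity factors $1/m_i$ as well. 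Your sheaf-theoretic route has the same problem in disguise: the surjection from a trivial sheaf is onto the conormal sheaf of the \emph{scheme-theoretic} fiber, and your claim that this agrees with $F$ on a dense open $U\subseteq F$ is unjustified — fibers of toric contractions over points of lower-dimensional orbits can be generically non-reduced along $F$ (this is precisely where the multiplicities $m_i>1$ live), in which case $U=\emptyset$ and the generic global generation argument says nothing. Finally, the transfer from a general curve to the given contracted curve $C$ via Picard number one presupposes that the discrepancy $K_F-K_X|_F$ is computed by pairing a fixed $\QQ$-Cartier class with $[C]$, i.e., it presupposes the very adjunction statement you have deferred; so the reduction is circular as written. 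Note also that the hypothesis that $\pi$ is an \emph{extremal} contraction is doing real work beyond giving $\rho(F)=1$, and your outline never uses it for the positivity step.

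For comparison, the paper's proof is purely combinatorial and isolates exactly these corrections. It replaces $F$ by the orbit closure $W=V(\tau)$ (the reduction of $\pi^{-1}(Z)$, of which $F$ is a general fiber, so $K_W|_F=K_F$), writes $-K_X\cdot C=\sum_i D_i\cdot C$, and controls the three kinds of terms: rays outside $\Star(\tau)$ give $D_i\cdot C=0$; rays of $\tau$ itself (the divisors containing the fiber) satisfy $D_i\cdot C\leq0$ by Matsuki's description of $\exc(\pi)$ for the extremal contraction — this is the step your ``fibers do not move outward'' heuristic is implicitly relying on but does not prove; and rays of $\Star(\tau)\setminus\tau$ contribute $D_i\cdot C=\tfrac{1}{m_i}D_i'\cdot C\leq D_i'\cdot C$, whose sum is $-K_W\cdot C$. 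If you want to pursue your approach, you would need to prove an adjunction inequality with an explicit, correctly signed correction term (a different plus the $1/m_i$ factors) and then show that the divisors containing the fiber meet $C$ non-positively, which in effect reproduces the paper's argument.
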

\begin{proof}
	Let $v_1,\dots,v_\ell$ be the rays of the fan $\Sigma_X$, and $D_i\subseteq X$ denote the torus-invariant divisor corresponding to $v_i$. Let $y=\pi(F)$. There is a unique torus-orbit closure $Z\subseteq Y$ such that $y$ is contained in the torus $T_Z$ of $Z$. Since the fibers of $\pi$ are irreducible, $\pi^{-1}(Z)$ is also irreducible. So, the reduction of $\pi^{-1}(Z)$ is a torus-orbit closure $W\subseteq X$. Let $\tau\in\Sigma_X$ be the cone corresponding to $W$. Since $F$ is positive-dimensional and it is a general fiber of $\pi|_W\colon W\to Z$, we see $W$ is contained in the exceptional locus $\exc(\pi)$.
	
	By \cite[Corollary 14-2-2]{matsuki}, $\exc(\pi)$ is the torus-orbit closure corresponding to the cone spanned by the rays $v_i$ with $D_i\cdot C<0$. Furthermore, \cite[Corollary 14-2-2]{matsuki} shows that the toric map from $\exc(\pi)$ to its image corresponds to the quotient map $\eta\colon N/N_-\to N/N_{\neq0}$ where $N_-$ (resp.~$N_{\neq0}$) is the saturation of the sublattice generated by the $v_i$ with $D_i\cdot C<0$ (resp.~$D_i\cdot C\neq0$). So if $D_i\cdot C>0$, then $\eta(\overline{v}_i)=0$ and hence $D_i$ does not contain a fiber of $\pi$. 
	In particular, $D_i\cdot C\leq0$ if $v_i$ is a ray of $\tau$.
	
	Reordering the rays if necessary, we can assume $\tau=\<v_1,\dots,v_a\>$, and that $v_{a+1},\dots,v_b$ are the rays in $\Star(\tau)$ which are not in $\tau$. For $a<i\leq b$, we let $D'_i$ be the torus-invariant divisor on $W$ corresponding to $v_i$, and let $m_i\geq1$ be the multiplicity of the cone $\<v_1,\dots,v_a,v_i\>$. Since $D_i\cdot C=0$ for $v_i\notin\Star(\tau)$, and since $D_i\cdot C\leq0$ for $v_i\in\tau$,
	\[
	-K_X\cdot C=\sum_{i=1}^b D_i\cdot C\leq\sum_{i=a+1}^b D_i\cdot C=\sum_{i=a+1}^b \frac{1}{m_i} D'_i\cdot C\leq\sum_{i=a+1}^b D'_i\cdot C=-K_W\cdot C.
	\]
	Finally, since $F$ is a general fiber of $\pi|_W$, we find $K_W|_F=K_F$, and so $-K_X\cdot C\leq -K_F\cdot C$.
\end{proof}

If $X\simeq\PP^n$, then Theorem \ref{thm:conj-plus-K-degree} follows from \cite[Theorem 2.6]{McK}. Therefore, the following result finishes the proof of Proposition \ref{prop:main-thm-base-case}.

\begin{proposition}
\label{prop:Mori-fiber-sp-case}
Let $X$ be a projective terminal $\QQ$-factorial split toric variety over a number field $k$ and let $\pi\colon X\to Y$ be the elementary contraction corresponding to an extremal ray $\mathcal{R}$. Let $D\in\Nef(X)\cap\mathcal{R}^\perp$ and $P\in X(k)\cap\exc(\pi)$ a canonically bounded point. If $X\not\simeq\PP^n$, then there exists an irreducible rational curve $C$ through $P$ such that $C$ is unibranch at $P$, $-K_X\cdot C\leq\dim X$, and $\alpha_{P,C}(D)\leq \alpha_{P,\{x_i\}}(D)$ for all Zariski dense sequences $\{x_i\}$ on $X$.
\end{proposition}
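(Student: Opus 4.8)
The plan is to reduce the statement to the fibers of $\pi$ and then invoke Proposition~\ref{prop:1ps-on-fake-wted-proj-sp}. Write $F$ for the reduction of the fiber of $\pi$ through $P$. Since $P\in\exc(\pi)$, this fiber is positive-dimensional, and since the fibers of a toric elementary contraction are irreducible, $F$ is an irreducible variety with $P\in F(k)$. I will first identify $F$ with a fake weighted projective space, then produce the curve $C$ inside $F$ via Proposition~\ref{prop:1ps-on-fake-wted-proj-sp}, and finally observe that $D\cdot C=0$, which makes the inequality of approximation constants immediate.

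For the identification, I follow the setup in the proof of Lemma~\ref{l:reduction-to-fibers}: let $Z\subseteq Y$ be the torus-orbit closure with $\pi(P)\in T_Z$, and let $W\subseteq\exc(\pi)$ be the (irreducible) reduction of $\pi^{-1}(Z)$, a torus-orbit closure of $X$. Then $F$ is the reduced fiber of the surjective toric morphism $\pi|_W\colon W\to Z$ over the point $\pi(P)\in T_Z(k)$. Because $T_W\twoheadrightarrow T_Z$, all fibers of $\pi|_W$ over points of $T_Z$ are $T_W$-translates of one another, hence mutually isomorphic; and the reduction of a general fiber of a toric extremal contraction is a fake weighted projective space, as one deduces from the lattice-quotient description of $\exc(\pi)\to Z$ in \cite[Corollary~14-2-2]{matsuki} together with the extremality of $\mathcal{R}$ (see also \cite[Ch.~15]{CLS}). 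Hence $F$ is a fake weighted projective space; it carries the action of the split subtorus $\ker(T_W\to T_Z)$, making it a split toric variety over $k$ containing the $k$-point $P$.

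Now apply Proposition~\ref{prop:1ps-on-fake-wted-proj-sp} to $(F,P)$: we obtain a unibranch rational curve $C\subseteq F$ through $P$, realized as the closure of a one-parameter subgroup in some torus-orbit closure of $F$, with $-K_F\cdot C\leq 1+\dim F$ and, when $F$ has terminal singularities and is not isomorphic to projective space, the sharper bound $-K_F\cdot C\leq\dim F$. I claim this $C$ works. It is irreducible, rational, unibranch at $P$, and passes through $P$. For the bound $-K_X\cdot C\leq\dim X$, Lemma~\ref{l:reduction-to-fibers} gives $-K_X\cdot C\leq -K_F\cdot C$, and I split into two cases. If $\dim F<\dim X$, then $-K_X\cdot C\leq -K_F\cdot C\leq 1+\dim F\leq\dim X$. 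If $\dim F=\dim X$, then (as $F\subseteq W\subseteq X$ with all three irreducible) $W=X$, so $\pi$ is not birational; being an elementary contraction, $\pi$ is then a Mori fiber space, and $\dim F=\dim X$ forces its base to be a point. Thus $X$ has Picard rank one and is therefore a fake weighted projective space with $F=X$; since $X$ has terminal singularities and $X\not\simeq\PP^n$ by hypothesis, the sharper bound of Proposition~\ref{prop:1ps-on-fake-wted-proj-sp}~(\ref{prop:1ps-on-fake-wted-proj-sp::n}) applies to $F=X$ and yields $-K_X\cdot C\leq\dim X$.

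It remains to check $\alpha_{P,C}(D)\leq\alpha_{P,\{x_i\}}(D)$ for every Zariski dense sequence $\{x_i\}$ on $X$. The crucial observation is that $D\cdot C=0$: since $C\subseteq F$ and $\pi(F)$ is a single point, $C$ is an effective curve contracted by the elementary contraction $\pi$, so its numerical class lies in the extremal ray $\mathcal{R}$, and $D\in\mathcal{R}^\perp$ then forces $D\cdot C=0$. As $C$ is unibranch at the $k$-rational point $P$, Theorem~\ref{thm:unibranch-curve} gives $\alpha_{P,C}(D)=\frac{1}{m}(C\cdot D)=0$ where $m=\mult_P C$. On the other hand $D$ is nef, hence globally generated since $X$ is toric, hence linearly equivalent to an effective divisor avoiding $P$; as in the proof of Proposition~\ref{prop:persistence-of-canonical-boundedness} this gives $\alpha_{P,\{x_i\}}(D)\geq 0$ for every sequence converging to $P$. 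Hence $\alpha_{P,C}(D)=0\leq\alpha_{P,\{x_i\}}(D)$, completing the argument; note that the canonical boundedness hypothesis is not needed here, precisely because $D$ lies on the boundary of $\Nef(X)$. The two places requiring care are the identification of the reduced fiber \emph{through $P$} (rather than merely a general fiber) with a fake weighted projective space — which is why one must pass through the orbit closure $Z$ with $\pi(P)\in T_Z$ and use the translation argument — and the dimension bookkeeping that localizes the use of Proposition~\ref{prop:1ps-on-fake-wted-proj-sp}~(\ref{prop:1ps-on-fake-wted-proj-sp::n}) to the case $F=X$; I expect the former to be the main obstacle.
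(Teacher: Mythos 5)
Your proposal is correct and follows essentially the same route as the paper: identify the reduced fiber through $P$ with a fake weighted projective space, produce the curve via Proposition~\ref{prop:1ps-on-fake-wted-proj-sp} (using part~(\ref{prop:1ps-on-fake-wted-proj-sp::n}) exactly when the fiber is all of $X$), bound $-K_X\cdot C$ via Lemma~\ref{l:reduction-to-fibers} and the dimension count, and conclude from $\alpha_{P,C}(D)=0$. The only differences are cosmetic: the paper cites Fujino's Remark~3.3 for the fiber identification where you sketch it from Matsuki plus torus translation, and it obtains $D\cdot C=0$ by writing $D=\pi^*D'$ rather than directly from $[C]\in\mathcal{R}$ and $D\in\mathcal{R}^\perp$.
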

\begin{proof}
Let $F$ be the reduction of the fiber containing $P$. It follows from 
\cite[Remark 3.3]{equiv-cmpt} that $F$ is a fake weighted projective space. 

Suppose first that $Y$ is a point. Then $F=X$. Since $X$ has terminal singularities and is not isomorphic to projective space, Proposition \ref{prop:1ps-on-fake-wted-proj-sp} (\ref{prop:1ps-on-fake-wted-proj-sp::curve}) and (\ref{prop:1ps-on-fake-wted-proj-sp::n}) tell us there is an irreducible rational curve $C$ through $P$ which is unibranch at $P$ and satisfies $-K_X\cdot C\leq\dim X$.

Next suppose $Y$ is not a point. Then $\dim F\leq\dim X-1$. By Proposition \ref{prop:1ps-on-fake-wted-proj-sp} (\ref{prop:1ps-on-fake-wted-proj-sp::curve}) and (\ref{prop:1ps-on-fake-wted-proj-sp::n1}), there is an irreducible rational curve $C\subseteq F$ through $P$ which is unibranch at $P$ and satisfies $-K_F\cdot C\leq 1+\dim F$. By Lemma \ref{l:reduction-to-fibers}, we have
\[
-K_X\cdot C\leq -K_F\cdot C\leq1+\dim F\leq\dim X.
\]

Lastly, note that, regardless of whether or not $Y$ is a point, there is a nef divisor $D'$ on $Y$ for which $D=\psi^*D'$. Then by Theorem \ref{thm:unibranch-curve}, if $m$ denotes the multiplicity of $C$ at $P$, we have $\alpha_{P,C}(D)=\frac{1}{m}C\cdot D=\frac{1}{m}\psi_*(C)\cdot D'=0$, so $\alpha_{P,C}(D)=0\leq\alpha_{P,\{x_i\}}(D)$ for all Zariski dense sequences $\{x_i\}$ on $X$.
\end{proof}

\section{Finding the curve of best approximation}
\label{sec:subtlety}

A curve $C\subseteq X$ is said to be a \emph{curve of best approximation} with respect to $D$ if $\alpha_{P,C}(D)=\alpha_P(D)$. The curve $C$ constructed in Theorem \ref{thm:conj-higher-dim} is not required to be a curve of best approximation, but only one that approximates $P$ better than any Zariski dense sequence. In addition to the theoretical point raised in Remark \ref{rmk:higher-dimensional-conj} that there may be some Zariski-degenerate sequence with higher dimensional closure that approximates $P$ better than $C$, there is the very practical point mentioned in Remark~\ref{rmk:not-best-curve} that the curve $C$ we find is {\em in fact} not always a curve of best approximation to $P$, as we discuss in this section.

For example, let $k$ be a number field, and fix a place $v$ of $k$. If $X$ is the weighted projective space $\PP(4,7,13)$, and $D$ is the generator of the Picard group of $X$, then $D=-\frac{4\cdot7\cdot13}{4+7+13}K_X$. Assuming canonical boundedness of the point $P=[1:1:1]$, we find then that $\alpha_{P,\{x_i\}}(D)\geq\frac{91}{3}$. Our proof of Theorem \ref{thm:conj-higher-dim} for this choice of $X$ and $P$ ultimately comes from Lemma \ref{l:all-D-C-leq1}. Specifically, the curve $C$ we construct in this case is $x^7=y^4$, which has $D$-degree $28$. So, $\alpha_{P,C}(D)=28<\frac{91}{3}\leq\alpha_{P,\{x_i\}}(D)$ for all Zariski dense sequences $\{x_i\}$. However, it is easy to see (for this particular $X$ and $P$) that there are other curves which have smaller $\alpha$-value, e.g.~the curve $x^5=yz$ has $D$-degree $20$ and hence has $\alpha$-value $20$.

One may wonder whether $x^5=yz$ is the curve of best approximation. The answer turns out to be interesting: a thorough search reveals that the curve $C'$ given by the equation
\[
x^8y+xy^5-3x^3y^2z+z^3=0
\]
satisfies $\alpha_{P,C'}(D)=19.5$ provided that $\sqrt{-3}\in k_v\setminus k$.  This is because $C'$, which has degree $C'\cdot D=39$, is singular at $P$ and the tangent directions to $C'$ at $P$ are distinct and split over the field $\Q(\sqrt{-3})$. So, if $\sqrt{-3}\in k_v\setminus k$, then Theorem~\ref{thm:curve} implies $\alpha_{P,C'}(D)=\frac{39}{2}=19.5$.

In fact, as we now explain, the curve $C'$ is a curve of best approximation to $P$ when $\sqrt{-3}\in k_v\setminus k$. Let $D'$ be the Weil divisor $x=0$, so that $D=91D'$. Letting $\m_P$ be the maximal ideal at $P$, we see $\O_{X,P}/\m_P^3$ has dimension ${4\choose 2}=6$. A straightforward computation shows that $H^0(14D')$ has dimension $7>{4\choose 2}$, and so there must be some non-zero section $g\in H^0(14D')$ that vanishes at $P$ to order at least $3$. In fact, one computes that, up to scalar, there is a unique such $g$, which defines the curve $C''$
\[
x^{14}-4x^9yz+x^7y^4+6x^4y^2z^2-4x^2y^5z+y^8-xz^4=0.
\]
Then the section $g^{13}\in H^0(13\cdot14D')=H^0(2D)$ vanishes at $P$ to order at least $39$. Thus, if $\pi\colon Y\to X$ denotes the blowup of $X$ at $P$, with exceptional divisor $E$, then $2\pi^*D-39E$ is effective. Let $B\subset X$ be the image of the asymptotic base locus of $2\pi^*D-39E$. Then Theorem~3.3 of \cite{MR}, shows that for any sequence $\{x_i\}$ not contained in $B$, we have $\alpha_{P,\{x_i\}}(D)\geq \frac{39}{2}=19.5$.  Thus, unconditionally (i.e.~without even assuming $P$ is canonically bounded), the curve $C'$ must be a curve of best $D$-approximation to $P$, once we show that there is no curve in the locus $B$ with a smaller $\alpha$-value than $C'$.

To handle curves in the locus $B$, first note that the self-intersection $(2\pi^*D-39E)^2=-65$ is negative. Now, $B$ is contained in the locus defined by the vanishing $g^{13}$, namely the divisor $13C''$. Thus, it suffices to show that $\alpha_{P,C''}(D)>\alpha_{P,C'}(D)$. This is the case since $C''$ has degree $56$, so by Theorem \ref{thm:curve}, we see $\alpha_{P,C''}(D)\geq \frac{56}{2}=28>19.5=\alpha_{P,C'}(D)$. Therefore, $C'$ is indeed a curve of best approximation to $P$, provided that $\sqrt{-3}\in k_v\setminus k$.

In summary, the curve of best approximation depends in a subtle way on the number field $k$. In particular, if one wishes to show the existence of a curve of best $D$-approximation to $P$ without assuming \emph{a priori} that $P$ is canonically bounded, one would need to provide an explanation for the non-trivial fact that $C'$ is contained in the Zariski closed locus of exceptions to the canonical boundedness condition provided by Vojta's Conjecture, at least when $\sqrt{-3}\in k_v\setminus k$.

\end{document}